\def\blfootnote{\gdef\@thefnmark{}\@footnotetext}
\title{An inverse mapping theorem for blow-Nash maps on singular spaces\blfootnote{2010 Mathematics Subject Classification. Primary: 14P99, 14E18; Secondary: 14P05, 14P20, 14B05.}\blfootnote{Keywords: arc-analytic, real singularities, motivic integration, Nash functions.}}
\newcommand\shorttitle{A blow-Nash inverse mapping theorem}
\author{Jean-Baptiste Campesato\footnote{\parbox[t][2em][s]{\textwidth}{Univ. Nice Sophia Antipolis, CNRS,  LJAD, UMR 7351, 06100 Nice, France. \newline E-mail address: \href{mailto:Jean-Baptiste.CAMPESATO@unice.fr}{\tt Jean-Baptiste.CAMPESATO@unice.fr}}}}
\date{May 11, 2015}
    \newcommand{\nequiv}{\not\equiv}
\definecolor{darkred}{rgb}{.5,0,0}
\definecolor{darkgreen}{rgb}{0,.5,0}
\definecolor{darkblue}{rgb}{0,0,.5}
\renewcommand{\thepage}{\arabic{page}} 
\shorttitle]{Jean-Baptiste Campesato}
\def\footindent{2em}
\renewcommand\@makefntext[1]{\leftskip=\footindent\hskip-\footindent\@makefnmark#1}
\newcommand*{\fnsymbolsingle}[1]{\ensuremath{\ifcase#1\or\star\or\dagger\or\ddagger\or\textsection\or\|\or\textparagraph\or\else\@ctrerr\fi}} 
\newcommand*{\fnsymbolsingle}[1]{\ensuremath{\ifcase#1\or\star\or\dagger\or\ddagger\or\mathsection\or\|\or\mathparagraph\or\else\@ctrerr\fi}}
\newalphalph{\fnsymbolmult}[mult]{\fnsymbolsingle}{}
\renewcommand{\maketitle}{
  \newpage
  \null
  \thispagestyle{plain}
  
  \begin{center}
    {\LARGE \@title \par}
    \vskip 1.5em
    {\large
      \lineskip .5em
      
        \@author
      \par}
    \vskip 1em
    {\large \@date}
  \end{center}
  \par
  \vskip 1.5em}
\theoremstyle{plain}
\newtheorem{thm}{Theorem}[section]
\newtheorem{prop}[thm]{Proposition}
\newtheorem{cor}[thm]{Corollary}
\newtheorem{lemma}[thm]{Lemma}
\theoremstyle{definition}
\newtheorem{defn}[thm]{Definition}
\newtheorem{eg}[thm]{Example}
\newtheorem{rem}[thm]{Remark}
\newtheorem{notation}[thm]{Notation}
\newtheorem{qu}[thm]{Question}
  \newcommand{\TODO}[1]{\@ifmtarg{#1}{\emph{\textbf{TODO}}~}{\emph{\textbf{TODO:}~#1~}}}
\newcommand{\clos}[2][]{\ensuremath{\overline{#2}\ifthenelse{\equal{#1}{}}{}{^{#1}}}}
\begin{document}
\renewcommand{\Im}{\operatorname{Im}}
\newcommand{\Jac}{\operatorname{Jac}}
\newcommand{\Fitt}{\operatorname{Fitt}}
\renewcommand{\d}{\mathrm{d}}
\newcommand{\ord}{\operatorname{ord}}
\renewcommand{\dim}{\operatorname{dim}}
\newcommand{\Sing}{\operatorname{\Sigma}}
\newcommand{\Reg}{\operatorname{Reg}}
\newcommand{\Spec}{\operatorname{Spec}}
\newcommand{\loc}{\operatorname{loc}}
\newcommand{\supp}{\operatorname{supp}}
\renewcommand{\deg}{\operatorname{deg}}
\renewcommand{\det}{\operatorname{det}}
\renewcommand{\Sing}{\operatorname{Sing}}
\renewcommand{\max}{\operatorname{max}}
\newcommand{\rk}{\operatorname{rk}}
\newcommand{\sing}{\mathrm{sing}}
\newcommand{\pring}[1]{\ensuremath{#1^\bullet}} 
\newcommand{\quotient}[2]{\left.\raisebox{.05em}{$#1$}\middle/\raisebox{-.05em}{$#2$}\right.}

\maketitle

\begin{abstract}

A semialgebraic map $f:X\to Y$ between two real algebraic sets is called \emph{blow-Nash} if it can be made Nash (i.e. semialgebraic and real analytic) by composing with finitely many blowings-up with non-singular centers.

We prove that if a blow-Nash self-homeomorphism $f:X\rightarrow X$ satisfies a lower bound of the Jacobian determinant condition then $f^{-1}$ is also blow-Nash and satisfies the same condition.

The proof relies on motivic integration arguments and on the virtual Poincaré polynomial of McCrory--Parusiński and Fichou. In particular, we need to generalize Denef--Loeser change of variables key lemma to maps that are generically one-to-one and not merely birational.
\end{abstract}

\tableofcontents

\section{Introduction}

\emph{Blow-analytic} maps were introduced by T.-C. Kuo in order to classify real singularities \cite{Kuo79,Kuo80,Kuo85}. A map $f:X\to Y$ between real algebraic sets is called blow-analytic if there exists $\sigma:M\rightarrow X$ a finite sequence of blowings-up with non-singular centers such that $f\circ\sigma$ is analytic. In the same vein a semialgebraic map between real algebraic sets is called blow-Nash if the composition with some finite sequence of blowings-up with non-singular centers is Nash (i.e. semialgebraic and analytic). \emph{Arc-analytic} maps were introduced by K. Kurdyka \cite{Kur88}. A map $f: X\to Y$ between two real algebraic sets is called arc-analytic if every real analytic arc on $X$ is mapped by $f$ to a real analytic arc on $Y$. By a result of E. Bierstone and P. D. Milman \cite{BM90} in response to a question of K. Kurdyka, if $f:X\rightarrow Y$ is semialgebraic (i.e. its graph is semialgebraic) and if $X$ is non-singular then $f$ is arc-analytic if and only if it is blow-Nash. When $X$ is non-singular, the set of points where such a map is analytic is dense \cite[5.2]{Kur88} and thus the Jacobian determinant of $f$ is defined everywhere except on a nowhere dense subset of $X$.

The following Inverse Function Theorem is known for $X$ non-singular \cite{FKP10}: \emph{if the Jacobian determinant of a blow-Nash self-homeomorphism $h: X\to X$ is locally bounded from below by a non-zero constant, on the set it is defined, then $h^{-1}$ is blow-Nash and its Jacobian determinant is also locally bounded from below by a non-zero constant on the set it is defined.}

In this paper, we generalize this theorem for singular algebraic sets.

We first introduce, in subsection \ref{sect:geometric}, the notion of generically arc-analytic maps which are maps $f:X\rightarrow Y$ between real algebraic sets such that there exists a nowhere dense subset $S$ of $X$ with the property that every arc on $X$ not entirely included in $S$ is mapped by $f$ to a real analytic arc on $Y$. When $\dim \operatorname{Sing}(X)\ge1$, we see that this condition is strictly weaker than being arc-analytic, otherwise a continuous generically arc-analytic map is an arc-analytic map. Then we show that the semialgebraic generically arc-analytic maps are exactly the blow-Nash ones.

Given $f:X\rightarrow X$ a blow-Nash self-map on a real algebraic set $X$, we have the following diagram
$$\xymatrix{&M \ar[dl]_\sigma \ar[dr]^{\tilde\sigma} \\ X \ar[rr]_f &&X}$$
with $\sigma$ given by a sequence of blowings-up with non-singular centers and $\tilde\sigma$ a Nash map. \\
We may now give an analogue of the lower bound of the Jacobian determinant condition: we say that $f$ satisfies the \emph{Jacobian hypothesis} if the Jacobian ideal of $\sigma$ is included in the Jacobian ideal of $\tilde\sigma$. This condition doesn't depend on the choice of $\sigma$. \\
We are now able to state the main theorem of this paper: \emph{let $f:X\rightarrow X$ be a semialgebraic self-homeomorphism with $X$ an algebraic subset then $f$ is blow-Nash and satisfies the Jacobian hypothesis if and only if $f^{-1}$ satisfies the same conditions.}

Heuristically, the main idea of the proof consists in comparing the \emph{``motivic volume''} of the set of arcs on $X$ and the \emph{``motivic volume''} of the set of arcs on $X$ coming from arcs on $M$ by $\tilde\sigma$. This allows us to prove that we can uniquely lift by $\tilde\sigma$ an arc not entirely included in some nowhere dense subset of $X$. Thereby, such an arc is mapped to an analytic arc by $f^{-1}$. Thus $f^{-1}$ is generically arc-analytic and so blow-Nash. \\
Therefore, we first define the arc space on an algebraic subset $X$ of $\mathbb R^N$ as the set of germs of analytic arcs on $\mathbb R^N$ which lie in $X$, i.e. $\gamma:(\mathbb R,0)\rightarrow X$ such that $\forall f\in I(X),\,f(\gamma)=0$. For $n\in\mathbb N$, we define the space of $n$-jets on $X$ as the set of $n$-jets $\gamma$ on $\mathbb R^N$ such that $\forall f\in I(X),\,f(\gamma(t))\equiv0\mod t^{n+1}$. The subsection \ref{sect:arcsjets} contains some general properties of these objects and some useful results for the proof of the main theorem.

The additive invariant used in order to apply motivic integration arguments is the virtual Poincaré polynomial which associates to a set of a certain class, denoted $\mathcal{AS}$, a polynomial with integer coefficients. We recall the main properties of the collection $\mathcal{AS}$ in subsection \ref{sect:AS}. The virtual Poincaré polynomial was constructed by C. McCrory, A. Parusiński \cite{MP03} and G. Fichou \cite{Fic05}. The subsection \ref{sect:vPp} contains the main properties of this invariant and motivates its use. \\
In order to compute the above-cited \emph{``motivic volumes''}, we first prove a version of Denef--Loeser key lemma for the motivic change of variables formula which fulfills our requirements and with a weaker hypothesis: we don't assume the map to be birational but only generically one-to-one.

Based on these results, we may finally prove there exists a subset on $X$ such that every analytic arc on $X$ not entirely included in this subset may be uniquely lifted by $\tilde\sigma$. This part relies on real analysis arguments and on the fact that an arc not entirely included in the center of a blowing-up may be lifted by this blowing-up. \\

\noindent\textbf{Acknowledgements.} I am very grateful to my thesis advisor Adam Parusiński for his help and support during the preparation of this work. 

\section{Preliminaries}
\subsection{Constructible sets and maps}\label{sect:AS}
Arc-symmetric sets have been first defined and studied by K. Kurdyka in \cite{Kur88}. A subset of an analytic manifold $M$ is arc-symmetric if all analytic arcs on $M$ meet it at isolated points or are entirely included in it. Semialgebraic arc-symmetric sets are exactly the closed sets of a noetherian topology $\mathcal{AR}$ on $\mathbb R^N$. We work with a slightly different framework defined by A. Parusiński in \cite{Par04} and consider the collection of sets $\mathcal{AS}$ defined as the boolean algebra generated by semialgebraic arc-symmetric subsets of $\mathbb P^n_\mathbb R$. 
The advantages of $\mathcal{AS}$ over $\mathcal{AR}$ are that we get a constructible category and a better control of the behavior at infinity. We refer the reader to \cite{KP07} for a survey. \nocite{MP97,MP07}

\begin{defn}[{\cite[2.4]{Par04}}]
Let $\mathcal C$ be a collection of semialgebraic sets. A map between two $\mathcal C$-sets is a $\mathcal C$-map if its graph is a $\mathcal C$-set. We say that $\mathcal C$ is a constructible category if it satisfies the following axioms:
\begin{enumerate}[label=A\arabic*., ref=A\arabic*, nosep]
\item $\mathcal C$ contains the algebraic sets.
\item $\mathcal C$ is stable by boolean operations $\cap$, $\cup$ and $\setminus$.
\item \begin{enumerate}[label=\alph*., labelindent=0pt, leftmargin=1em, nosep]
\item The inverse image of a $\mathcal C$-set by a $\mathcal C$-map is a $\mathcal C$-set.
\item The image of a $\mathcal C$-set by an \emph{injective} $\mathcal C$-map is a $\mathcal C$-set.
\end{enumerate}
\item \label{item:FunClass} Each locally compact $X\in\mathcal C$ is Euler in codimension 1, i.e. there is a semialgebraic subset $Y\subset X$ with $\dim Y\le\dim X-2$ such that $X\setminus Y$ is Euler\footnote{A locally compact semialgebraic set $X$ is Euler if for every $x\in X$ the Euler-Poincaré characteristic of $X$ at $x$ \\ $\chi(X,X\setminus x)=\sum(-1)^i\dim H_i(X,X\setminus x;\mathbb Z_2)$ is odd.}.
\end{enumerate} 
\end{defn}

\begin{rem}
A locally compact semialgebraic set $X$ is Euler in codimension $1$ if and only if it admits a fundamental class for the homology with coefficient in $\mathbb Z_2$. For instance, this property is crucial in the construction of the virtual Poincaré polynomial in order to use the Poincaré duality.
\end{rem}

Given a constructible category $\mathcal C$, we have a notion of $\mathcal C$-closure.

\begin{thm}[{\cite[2.5]{Par04}}]
Let $\mathcal C$ be a constructible category and let $X\in \mathcal C$ be a locally closed set. Then for any subset $A\subset X$ there is a smallest closed subset of $X$ which belongs to $\mathcal C$ and contains $A$. It is denoted by $\clos[\mathcal C]{A}$. Any other closed subset of $X$ that is in $\mathcal C$ and contains $A$ must contain $\clos[\mathcal C]{A}$.
\end{thm}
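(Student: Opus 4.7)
The natural candidate is
$$\clos[\mathcal C]{A}\;:=\;\bigcap_{C\in\mathcal F}C,\qquad\mathcal F\;:=\;\{C\subset X:A\subset C,\;C\text{ closed in }X,\;C\in\mathcal C\}.$$
The family $\mathcal F$ is non-empty, since $X$ itself belongs to $\mathcal F$ (by hypothesis $X\in\mathcal C$, and $X$ is closed in $X$). Once this intersection is shown to lie in $\mathcal C$, the rest of the statement is immediate: it is by construction contained in every element of $\mathcal F$, so in every closed $\mathcal C$-subset of $X$ containing $A$, which is exactly the minimality asserted.

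The only delicate point is therefore that an \emph{a priori} arbitrary intersection belongs to $\mathcal C$, whereas axiom A2 only gives stability under \emph{finite} boolean operations. I would reduce to a finite intersection by a descending-chain argument. Start with $C_0:=X\in\mathcal F$; inductively, as long as $C_n\neq\bigcap_{C\in\mathcal F}C$, choose some $D_n\in\mathcal F$ with $C_n\not\subset D_n$ and set $C_{n+1}:=C_n\cap D_n$. By A2 each $C_n$ lies in $\mathcal C$ and is closed in $X$ and contains $A$; by construction one has $C_0\supsetneq C_1\supsetneq\cdots$. If this chain terminates at some $C_N$, then by the stopping criterion $C_N=\bigcap_{C\in\mathcal F}C$, which then belongs to $\mathcal C$ and is the sought $\clos[\mathcal C]{A}$.

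What is needed is therefore Noetherianity of the topology on $X$ whose closed sets are the closed $\mathcal C$-subsets of $X$. This is not part of axioms A1--A4 but is a structural property of the constructible categories actually used in the paper: closed $\mathcal C$-subsets are in particular semialgebraic arc-symmetric, and by Kurdyka's result \cite{Kur88} (transported from $\mathbb R^N$ to $\mathbb P^n_\mathbb R$ and restricted to $X$) such sets form the closed sets of a Noetherian topology. The main obstacle is precisely this Noetherianity step: one must know that closed $\mathcal C$-sets sit inside the $\mathcal{AR}$-closed sets of \cite{Kur88}, and that locally closedness of $X$ lets one pull back Noetherianity from the ambient $\mathbb P^n_\mathbb R$ to $X$ itself. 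Granting this, the descending chain stabilizes in finitely many steps and the theorem follows.
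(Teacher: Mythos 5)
The paper does not prove this result: it is quoted verbatim from \cite[2.5]{Par04} and used as a black box, so there is no ``paper's own proof'' to compare against. I can therefore only evaluate your proposal on its internal merits and against the surrounding context in the paper.

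Your overall template --- take $\clos[\mathcal C]{A}$ to be the intersection of all closed $\mathcal C$-subsets of $X$ containing $A$, then reduce the a priori infinite intersection to a finite one via a strictly descending chain and a noetherianity/DCC argument --- is the natural one, and the reduction itself is correctly executed: each $C_n$ is closed in $X$, contains $A$, and lies in $\mathcal C$ by A2, and the stopping condition exactly characterizes when $C_N=\bigcap_{C\in\mathcal F}C$.

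The genuine gap is the noetherianity step, and the justification you offer does not hold at this point in the development. You assert that ``closed $\mathcal C$-subsets are in particular semialgebraic arc-symmetric,'' but this does \emph{not} follow from axioms A1--A4: the axioms say only that $\mathcal C$-sets are semialgebraic and that locally compact $\mathcal C$-sets are Euler in codimension~1, which is strictly weaker than arc-symmetry. The fact that every constructible category sits inside $\mathcal{AS}$ (and hence consists of arc-symmetric sets) is a separate, non-trivial theorem of \cite{Par04}, which in the paper's exposition is stated \emph{after} the $\mathcal C$-closure theorem. Worse, the proposition immediately following the quoted theorem reads ``Using the proof of \cite[Theorem 2.5]{Par04}, we get \ldots\ a unique noetherian topology \ldots,'' which strongly indicates that in Parusi\'nski's argument noetherianity of the relevant topology is an \emph{output} of the proof of Theorem~2.5, not an input to it. Invoking Kurdyka's noetherianity of $\mathcal{AR}$ and transporting it to $\mathcal C$ via the inclusion $\mathcal C\subset\mathcal{AS}$ therefore risks circularity; at the very least it relies on machinery that cannot be taken for granted from the axioms A1--A4 under which the theorem is stated. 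To make your proof complete for a general constructible category $\mathcal C$, you would need either a direct derivation of the descending chain condition for closed $\mathcal C$-sets from A1--A4 (for instance via an induction on dimension combined with the Zariski closure, which by A1 and A2 is an algebraic set in $\mathcal C$ of the same dimension as $A$), or a careful argument showing that the chain $\mathcal C\subset\mathcal{AS}$, Kurdyka's theorem, and the restriction to $X$ are all available before Theorem~2.5 is established.
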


\begin{rem}[{\cite[2.7]{Par04}}]
If $A$ is semialgebraic then $\dim\clos[\mathcal C]{A}=\dim A$. In particular, if $A\in\mathcal C$ then $\clos[\mathcal C]{A}=A\cup\clos[\mathcal C]{\clos A\setminus A}$ and hence $\dim\left(\clos[\mathcal C]{A}\setminus A\right)<\dim A$.
\end{rem}

\begin{defn}[{\cite[\S4.2]{Par04}}]
A semialgebraic subset $A\subset\mathbb P_\mathbb R^n$ is an \emph{$\mathcal{AS}$-set} if for every real analytic arc $\gamma:(-1,1)\rightarrow\mathbb P_\mathbb R^n$ such that $\gamma((-1,0))\subset A$ there exists $\varepsilon>0$ such that $\gamma((0,\varepsilon))\subset A$.
\end{defn}

Using the proof of \cite[Theorem 2.5]{Par04}, we get the following proposition.
\begin{prop}
There exists a unique noetherian topology on $\mathbb P_\mathbb R^n$ whose closed sets are exactly the closed $\mathcal{AS}$-subsets. 
\end{prop}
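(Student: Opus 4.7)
The plan is to verify that the family $\mathcal F$ of closed $\mathcal{AS}$-subsets of $\mathbb P_\mathbb R^n$ satisfies the axioms for the closed sets of a noetherian topology. Uniqueness is immediate since a topology on a set is determined by its collection of closed sets. For existence I would check that $\mathcal F$ contains $\emptyset$ and $\mathbb P_\mathbb R^n$ (trivial), is stable under finite unions and finite intersections, and satisfies the descending chain condition (DCC); stability under arbitrary intersections --- needed for a genuine topology --- will then follow automatically.

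Finite intersections are immediate: if $A,B\in\mathcal F$ and $\gamma((-1,0))\subset A\cap B$, apply the $\mathcal{AS}$ property of $A$ and of $B$ separately to obtain $\varepsilon_A,\varepsilon_B>0$ with $\gamma((0,\varepsilon_A))\subset A$ and $\gamma((0,\varepsilon_B))\subset B$, then take $\varepsilon=\min(\varepsilon_A,\varepsilon_B)$. Finite unions are the only substantive analytic point: for $A,B\in\mathcal F$ and an analytic arc $\gamma:(-1,1)\to\mathbb P_\mathbb R^n$ with $\gamma((-1,0))\subset A\cup B$, I would use that the preimages $\gamma^{-1}(A)$ and $\gamma^{-1}(B)$ are semi-analytic subsets of the interval and hence, near $0$, each is a finite union of intervals and isolated points. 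Since their union covers $(-1,0)$, one of them, say $\gamma^{-1}(A)$, must contain a left half-neighborhood $(-\delta,0)$; after rescaling $\gamma$, the $\mathcal{AS}$ property of $A$ then produces $\varepsilon>0$ with $\gamma((0,\varepsilon))\subset A\subset A\cup B$.

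Noetherianity is the heart of the matter and the main obstacle. I would follow the strategy of the proof of \cite[Theorem 2.5]{Par04}, which rests on Kurdyka's theory of arc-symmetric sets. For a strict descending chain $A_1\supsetneq A_2\supsetneq\cdots$ in $\mathcal F$, the sequence $\dim A_i$ is non-increasing and so eventually constant equal to some integer $d$. Using Kurdyka's finite decomposition of a semialgebraic arc-symmetric set into irreducible arc-symmetric components of pure dimension --- components which themselves lie in $\mathcal F$ --- together with the rigidity of $d$-dimensional irreducible components under inclusion, an induction on dimension forces the chain to terminate. Making this last step precise is the delicate part, since the naive monotone behavior of top-dimensional components under inclusion has to be combined carefully with the residue of strictly smaller dimension.

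Finally, stability under arbitrary intersection follows formally from DCC together with stability under finite intersection: given any family $\{A_i\}_{i\in I}\subset\mathcal F$, the collection of its finite subintersections is contained in $\mathcal F$ and is itself closed under finite intersection, so by DCC it contains a minimum $B=A_{i_1}\cap\cdots\cap A_{i_k}$; minimality forces $B\cap A_j=B$, i.e.\ $B\subseteq A_j$, for every $j\in I$, and therefore $B=\bigcap_{i\in I}A_i$ lies in $\mathcal F$. This gives all four axioms and produces the unique noetherian topology of the statement.
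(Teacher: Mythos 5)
Your proposal is correct and takes essentially the same route the paper does: the paper simply cites the proof of \cite[Theorem 2.5]{Par04}, and your sketch reconstructs that argument (axioms for closed sets, stability under finite unions via the one-variable structure of $\gamma^{-1}(A)$, noetherianity by Kurdyka-style decomposition and induction on dimension, and arbitrary intersections deduced formally from DCC). One small imprecision worth flagging: you call $\gamma^{-1}(A)$ semi-analytic, but the preimage of a semialgebraic set under an analytic map is a priori only subanalytic; this is harmless here because subanalytic subsets of an interval are still finite unions of points and intervals, which is all your argument uses.
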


\begin{thm}[\cite{Par04}]\ 
\begin{itemize}[nosep]
\item The algebraically constructible sets form a constructible category denoted by $\mathcal{AC}$.
\item $\mathcal{AS}$ is a constructible category.
\item Every constructible category contains $\mathcal{AC}$ and is contained in $\mathcal{AS}$. This implies that each locally compact set in a constructible category is Euler.
\item $\mathcal{AS}$ is the only constructible category which contains the connected components of compact real algebraic sets.
\end{itemize}
\end{thm}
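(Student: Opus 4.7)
The plan is to verify the four axioms A1--A4 in turn for both $\mathcal{AC}$ and $\mathcal{AS}$, then to deduce the sandwich $\mathcal{AC}\subseteq\mathcal C\subseteq\mathcal{AS}$ from these axioms alone, and finally to obtain the uniqueness of $\mathcal{AS}$ from a generating property of connected components of compact real algebraic sets.

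For $\mathcal{AC}$, axioms A1, A2 and A3a follow from Tarski--Seidenberg together with the standard boolean and projection stability of algebraically constructible sets. Axiom A3b reduces, via an irreducible decomposition, to the case of an injective polynomial map, where a dimension argument on the image suffices. The non-routine part is A4, for which I would appeal to Sullivan's theorem that the real points of a real algebraic variety are Euler in codimension one, and then propagate this property through the generating operations of $\mathcal{AC}$. For $\mathcal{AS}$, axiom A1 follows from analytic continuation: an analytic arc whose left half lies in an algebraic set is entirely contained in it. Axiom A2 is a direct arc-by-arc verification from the definition. Axiom A3a uses that $\mathcal{AS}$-maps transport analytic arcs through their graphs, and A3b is handled by injectivity together with a generic smoothness argument on a dense open subset. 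For A4 I would invoke Kurdyka's results on semialgebraic arc-symmetric sets, which yield a fundamental class modulo $2$ in codimension one.

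The containment $\mathcal{AC}\subseteq\mathcal C$ is immediate, since $\mathcal C$ contains the algebraic sets by A1 and is closed under the operations that generate $\mathcal{AC}$ from them. For $\mathcal C\subseteq\mathcal{AS}$ I would proceed by structural induction on a presentation of a $\mathcal C$-set using A1--A3, checking at each step that the arc-lifting condition defining $\mathcal{AS}$ is preserved. The step I expect to be the main obstacle is A3b in the verification for $\mathcal{AS}$: showing that the injective image of an $\mathcal{AS}$-set is again $\mathcal{AS}$ is not simply a matter of tracking arcs through the map, since the inverse is not a priori arc-analytic; one has to analyse its analytic behaviour on a dense open subset of the image, handling the complement of that subset by noetherian induction. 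Once the sandwich is in place, the uniqueness in item four reduces to showing that the boolean closure of the connected components of compact real algebraic sets, stabilised under injective $\mathcal C$-maps, already exhausts $\mathcal{AS}$, which combined with item three yields the equality $\mathcal C=\mathcal{AS}$.
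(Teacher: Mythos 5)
This theorem is cited from Parusiński's paper \cite{Par04} and is not proved in the present paper, so there is no internal proof to compare against; I can only assess your outline on its own terms, and it has a serious structural gap.

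The fatal step is your argument for $\mathcal C\subseteq\mathcal{AS}$. You propose ``structural induction on a presentation of a $\mathcal C$-set using A1--A3,'' but axioms A1--A3 are \emph{closure} properties of $\mathcal C$, not a presentation theorem: nothing in the definition says every $\mathcal C$-set is built from algebraic sets by boolean operations and injective $\mathcal C$-images. If that were so, every constructible category would equal $\mathcal{AC}$ and the sandwich statement would be vacuous. Worse, A1--A3 alone genuinely do not force $\mathcal C\subseteq\mathcal{AS}$: the class of \emph{all} semialgebraic sets satisfies A1--A3 but is strictly larger than $\mathcal{AS}$, and indeed it fails A4. So the whole content of $\mathcal C\subseteq\mathcal{AS}$ lies in exploiting axiom A4 (every locally compact $\mathcal C$-set is Euler in codimension $1$), which your outline never uses in this step. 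Parusiński's proof hinges on showing that the codimension-one Euler condition, together with closure under the other axioms and a dimension induction, forces arc-symmetry; omitting A4 leaves a genuine hole, not merely a detail to be filled in.

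A second, smaller issue concerns the mechanism for A3b. For $\mathcal{AC}$ you invoke ``a dimension argument on the image,'' and for $\mathcal{AS}$ a ``generic smoothness argument plus noetherian induction.'' Neither is the right tool. The relevant machinery (and what ultimately proves both that $\mathcal{AC}$ and $\mathcal{AS}$ are constructible categories) is the Euler-characteristic-of-fibers criterion that appears as Theorem~\ref{thm:oddfibers} above: an injective regular or $\mathcal{AS}$-map has fibers of Euler characteristic $0$ or $1$, hence its image is again in the class. Your general plan for items one, two and four is recognizably in the right spirit, but without replacing the ``structural induction'' step by a genuine use of A4, and without the odd-fiber criterion for A3b, the argument does not go through.
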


In what follows, \emph{constructible subset} stands for $\mathcal{AS}$-subset, \emph{constructible map} stands for map with constructible graph and \emph{constructible isomorphism} stands for $\mathcal{AS}$-homeomorphism. \\

In our proof of Lemma \ref{lem:CoV} we need the following result which is, in some sense, a replacement of Chevalley's theorem for Zariski-constructible sets over an algebraically closed field.
\begin{thm}[{\cite[4.3]{Par04}}]\label{thm:oddfibers}
Let $A$ be a semialgebraic subset of a real algebraic subset $X$ of $\mathbb P_\mathbb R^n$. Then $A\in\mathcal{AS}$ if and only if there exist a regular morphism of real algebraic varieties $f:Z\rightarrow X$ and $Z'$ the union of some connected components of $Z$ such that $$x\in A\Leftrightarrow \chi\left(f^{-1}(x)\cap Z'\right)\equiv1\mod2$$$$x\notin A\Leftrightarrow \chi\left(f^{-1}(x)\cap Z'\right)\equiv0\mod2$$ where $\chi$ is the Euler characteristic with compact support. \\
In particular the image of an $\mathcal{AS}$-subset by a regular map whose Euler characteristics with compact support of all the fibers are odd is an $\mathcal{AS}$-subset.
\end{thm}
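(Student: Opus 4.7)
The plan is to introduce the class $\mathcal{N}$ of semialgebraic subsets $A\subseteq X$ that admit a presentation $(f:Z\to X,Z')$ as in the statement, show that $\mathcal{N}$ is closed under the Boolean operations, and then prove the two inclusions $\mathcal{N}\subseteq\mathcal{AS}$ and $\mathcal{AS}\subseteq\mathcal{N}$. The ``in particular'' clause then follows from the easier direction $\mathcal{N}\subseteq\mathcal{AS}$ by taking $Z'=Z$.

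First I would verify that $\mathcal{N}$ is a Boolean algebra. Complementation is obtained by replacing $(Z,Z',f)$ with $(Z\sqcup X,\,Z'\sqcup X,\,f\sqcup\operatorname{id}_X)$, which shifts each fiber Euler characteristic by $1$ modulo $2$. For intersection, the fibered product $(Z_1\times_X Z_2,\,Z_1'\times_X Z_2',\,f_1\circ\operatorname{pr}_1)$ has fiber Euler characteristic $\chi_1(x)\chi_2(x)$ by the multiplicativity of Euler characteristic with compact support, whose parity is $1$ exactly on $A_1\cap A_2$. Union then comes by inclusion--exclusion: the disjoint union $Z_1\sqcup Z_2\sqcup(Z_1\times_X Z_2)$ with $Z_1'\sqcup Z_2'\sqcup(Z_1'\times_X Z_2')$ realizes the fiber characteristic $\chi_1+\chi_2+\chi_1\chi_2$, which is odd modulo $2$ iff at least one of $\chi_1,\chi_2$ is odd.

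For the inclusion $\mathcal{N}\subseteq\mathcal{AS}$, given $A\in\mathcal{N}$ with presentation $(f,Z,Z')$ and a real analytic arc $\gamma:(-1,1)\to X$ with $\gamma((-1,0))\subseteq A$, I would first reduce via Hironaka's resolution and compactification to the case where $Z$ is compact nonsingular and $f$ is proper, preserving the parity of fiber Euler characteristics with compact support by additivity along stratifications. Proper arc-lifting then yields finitely many real analytic lifts of $\gamma$, and the parity of the count of those lifts landing in $Z'$ at parameter $t$ agrees with the parity of $\chi(f^{-1}(\gamma(t))\cap Z')$ up to an even error coming from pairs of complex-conjugate branches. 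Since each real analytic lift extends across $t=0$, the parity on $(-1,0)$ equals the parity on $(0,\varepsilon)$, forcing $\gamma((0,\varepsilon))\subseteq A$. For the converse $\mathcal{AS}\subseteq\mathcal{N}$, since $\mathcal{AS}$ is the minimal constructible category containing the connected components of compact real algebraic sets, and $\mathcal{N}$ has just been shown to be Boolean-stable, it suffices to realize each such connected component $C$ of a compact real algebraic $Y\subseteq X$ in $\mathcal{N}$; taking $Z=Y$, $Z'=C$, and $f$ the inclusion makes every non-empty fiber a single point, and the odd-fiber locus is exactly $C$.

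The main obstacle is the arc-lifting parity step in $\mathcal{N}\subseteq\mathcal{AS}$: making rigorous that the parity of the geometric count of real analytic lifts of $\gamma$ landing in $Z'$ at parameter $t$ really does agree with the parity of $\chi(f^{-1}(\gamma(t))\cap Z')$, and that this parity is preserved as $t$ crosses $0$. This requires a careful use of resolution and compactification of $(Z,Z',f)$ so that fibers decompose into complex-conjugate pairs plus controlled real branches; a secondary technical point is upgrading the Boolean-algebra structure on $\mathcal{N}$ to closure under inverse images along constructible maps and under injective direct images, as needed to genuinely place $\mathcal{N}$ inside the constructible-category axioms used throughout the paper.
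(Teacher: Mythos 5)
The paper does not prove this statement; it is quoted from Parusi\'nski's work (reference [Par04], Theorem 4.3) and used as a black box, so there is no internal proof to compare against. Taken on its own, your Boolean-algebra step is correct: $Z_1'\times_X Z_2'$ is open and closed in the algebraic fibre product $Z_1\times_X Z_2$, hence a union of components, so the presentations for complement, intersection, and union are all legitimate and the parity bookkeeping checks out. However, both obstacles you flag at the end are genuine gaps, and as written the outline does not close them. For $\mathcal{N}\subseteq\mathcal{AS}$, the needed fact is that for every analytic arc $\gamma$ the parity of $\chi\bigl(f^{-1}(\gamma(t))\cap Z'\bigr)$ is symmetric about $t=0$. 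Your complex-conjugate-branch picture is the right mechanism when the fibres are finite, but after compactifying the fibres are in general positive-dimensional, and relating their Euler characteristics to a count of real branches needs a stratification-and-boundary reduction that the outline does not supply. This symmetry of parity is precisely the nontrivial core of the McCrory--Parusi\'nski theory of algebraically and Nash-constructible functions, not a routine manipulation, so invoking it informally is where the proof stops being a proof.

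For $\mathcal{AS}\subseteq\mathcal{N}$, appealing to the uniqueness of $\mathcal{AS}$ among constructible categories only works after verifying that $\mathcal{N}$ is itself a constructible category, and Boolean stability does not give axioms A3a and A3b. Concretely, if $A\in\mathcal{N}$ is presented by a regular $f:Z\to X$ and $g:Y\to X$ is an $\mathcal{N}$-map that is not regular, there is no algebraic fibre product $Z\times_X Y$ along which to pull back the presentation, and the natural reduction of A3a to pushing forward along $\mathrm{pr}_Y|_{\Gamma_g}$ merely trades A3a for A3b, which you also have not addressed. Without these, $\mathcal{N}$ need not be recognized as equal to $\mathcal{AS}$ by the uniqueness theorem. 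Parusi\'nski's own route packages exactly these closure properties into the constructible-function calculus before stating Theorem 4.3; reproducing them from scratch is essentially as hard as the theorem. A less circular path for this direction is to use that $\mathcal{AS}$ is generated as a Boolean algebra by semialgebraic arc-symmetric subsets of $\mathbb{P}^n_{\mathbb{R}}$ and to present each such generator in $\mathcal{N}$ directly: after resolution of singularities of the Zariski closure, a closed arc-symmetric semialgebraic set pulls back to a union of connected components of a compact nonsingular algebraic set up to a lower-dimensional defect, which then reduces (by induction on dimension and your Boolean stability) to the connected-component case you already handle. That still uses resolution but sidesteps the constructible-category axioms.
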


In this paper, we need to work with $\mathcal{AS}$-sets in order to use the virtual Poincaré polynomial discussed below.

In our settings, the noetherianity of the $\mathcal{AS}$ topology will also allow us to prove a version of J. Denef and F. Loeser key lemma for the motivic change of variables formula with a weaker hypothesis. Indeed, we won't assume that the map is birational but only Nash, proper and generically one-to-one.

\subsection{The virtual Poincaré polynomial}\label{sect:vPp}
C. McCrory and A. Parusiński proved in \cite{MP03} there exists a unique additive invariant of real algebraic varieties which coincides with the Poincaré polynomial for (co)homology with $\mathbb Z_2$ coefficients for compact and non-singular real algebraic varieties. Moreover, this invariant behaves well since its degree is exactly the dimension and the leading coefficient is positive. This virtual Poincaré polynomial has been generalized to $\mathcal{AS}$-subsets by G. Fichou in \cite{Fic05}. Furthermore Nash-equivalent $\mathcal{AS}$-subsets have the same virtual Poincaré polynomial. These proofs use the weak factorization theorem \cite{Wlo03,AKMW} in a way similar of what has been done by Bittner in \cite{Bit04} to give a new description of the Grothendieck ring in terms of blowings-up.

\begin{thm}[\cite{Fic05}]
There is an additive invariant $\beta:\mathcal{AS}\rightarrow\mathbb Z[u]$, called the \emph{virtual Poincaré polynomial}, which associates to an $\mathcal{AS}$-subset a polynomial with integer coefficients $\beta(X)=\sum\beta_i(X)u^i\in\mathbb Z[u]$ and satisfies the following properties:
\begin{itemize}
\item $\beta\left(\displaystyle\bigsqcup_{i=1}^kX_i\right)=\displaystyle\sum_{i=1}^k\beta(X_i)$
\item $\beta(X\times Y)=\beta(X)\beta(Y)$
\item For $X\neq\varnothing$, $\deg\beta(X)=\dim X$ and the leading coefficient of $\beta(X)$ is positive\footnote{$\beta(\varnothing)=0$}.
\item If $X$ is non-singular and compact then $\beta_i(X)=\dim H_i(X,\mathbb Z_2)$.
\item If $X$ and $Y$ are Nash-equivalent then $\beta(X)=\beta(Y)$.
\end{itemize}
\end{thm}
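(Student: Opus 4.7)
The plan is to follow the McCrory--Parusi\'nski construction on real algebraic varieties and then Fichou's extension to $\mathcal{AS}$-sets. On a compact non-singular real algebraic variety $X$, I would set $\beta(X):=\sum_i \dim_{\mathbb{Z}_2} H_i(X;\mathbb{Z}_2)\,u^i$; multiplicativity on products of such varieties is immediate from the K\"unneth formula with $\mathbb{Z}_2$-coefficients. To extend additively to all real algebraic varieties, I would invoke Bittner's presentation of the Grothendieck group, a formal consequence of the weak factorization theorem: the group is generated by classes of compact non-singular varieties modulo the blowing-up relation $[\mathrm{Bl}_Z X]-[E]=[X]-[Z]$, where $Z\subset X$ is a smooth closed subvariety and $E$ the exceptional divisor. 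The core computation is that $\beta$ respects this relation, which reduces via the projective bundle formula to computing the $\mathbb{Z}_2$-Poincar\'e polynomials of $E\to Z$ and of $\mathrm{Bl}_Z X$.

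To extend to $\mathcal{AS}$-sets, I would stratify a given $A\in\mathcal{AS}$ by locally closed non-singular Nash strata inside a Nash compactification obtained by resolution of singularities, define $\beta(A)$ additively on strata, and establish independence of the stratification via common refinement plus the blowing-up relation already verified. Nash-equivalence invariance then follows because a Nash equivalence can be factored, by weak factorization, as a zigzag of blowings-up with non-singular centers. The assertions $\deg\beta(X)=\dim X$ and positive leading coefficient come from $\mathbb{Z}_2$-Poincar\'e duality on smooth compact strata of top dimension: any $\mathcal{AS}$-set of dimension $n$ contains a locally closed Nash stratum of dimension $n$ whose closure contributes $u^n$ with positive coefficient, and no stratum of smaller dimension contributes to this degree. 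Identification with the ordinary $\mathbb{Z}_2$-Poincar\'e polynomial on compact non-singular varieties is built into the definition.

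The main obstacle is well-definedness at the $\mathcal{AS}$-level. One must verify that the extension from the algebraic Grothendieck group to $\mathcal{AS}$ is compatible with all admissible decompositions: the presence of arc-symmetric but genuinely non-algebraic building blocks forces a careful check that blowings-up of $\mathcal{AS}$-sets along non-singular Nash centers yield $\mathcal{AS}$-data to which the algebraic blowing-up relation applies, and that the corresponding refinements behave well along the $\mathcal{AS}$-closure of strata (so that the noetherianity of the $\mathcal{AS}$ topology can be used to terminate the reduction). Once this compatibility is secured, additivity, multiplicativity, degree, positivity of the leading coefficient, and Nash invariance all follow formally.
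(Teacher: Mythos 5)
This theorem is not proved in the paper; it is cited directly from \cite{Fic05} (building on \cite{MP03} for the algebraic case), and the surrounding text only sketches the strategy: use the weak factorization theorem \cite{Wlo03,AKMW} in the spirit of Bittner's presentation \cite{Bit04} of the Grothendieck ring. Your sketch reproduces that strategy, so there is nothing to compare against an internal proof — but a few remarks on your outline are worth making.

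Your description of the algebraic core (define $\beta$ via $\mathbb{Z}_2$-Betti numbers on compact non-singular varieties, verify the blowing-up relation via the projective bundle formula, invoke Bittner's presentation) is accurate and is what McCrory--Parusi\'nski do. The subtler points are exactly the ones you yourself flag, but I would sharpen two of them. First, the Nash-equivalence invariance does not come from ``factoring a Nash equivalence as a zigzag of blowings-up'' — weak factorization applies to \emph{birational} maps of smooth proper algebraic varieties, not to semialgebraic Nash isomorphisms, which need not be birational in any sense. Fichou's argument is different: he shows $\beta$ descends to the Grothendieck ring of Nash isomorphism classes because the construction can be made to depend only on the Nash type through an appropriate choice of compactification and stratification, using Nash triviality results rather than weak factorization. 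Second, the degree and positivity statement requires an induction on dimension along the lines you indicate ($\beta(U)=\beta(\clos U)-\beta(\clos U\setminus U)$ with the boundary of strictly smaller dimension), but you should be explicit that the base case uses that a compact connected $d$-manifold has top $\mathbb{Z}_2$-Betti number $1$, and that additivity alone does not keep leading coefficients from cancelling without this inductive control. With those two points corrected your outline matches the intended construction.
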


The virtual Poincaré polynomial is a more interesting additive invariant than the Euler characteristic with compact support since it stores more information, like the dimension. 
Notice that it is well known that if we forget the arc-symmetric hypothesis and work with all semialgebraic sets, the Euler characteristic with compact support is the only additive invariant \cite{Qua01}.

\subsection{Geometric settings}\label{sect:geometric}
For the sake of convenience, we recall some basics of Nash geometry and arc-analytic maps before introducing generically arc-analytic maps.

A Nash function on an open semialgebraic subset of $\mathbb R^N$ is an analytic function which satisfies a non-trivial polynomial equation. This notion coincides with $C^\infty$ semialgebraic functions. We can therefore define the notion of Nash submanifold in an obvious way. This notion is powerful since we can use tools from both algebraic and analytic geometries, for example we have a Nash implicit function theorem. For more details on Nash geometry, we refer the reader to \cite{BCR} and \cite{Shi87}.

Arc-analytic maps were first introduced by K. Kurdyka in relation with arc-symmetric sets in \cite{Kur88}. These are maps that send analytic arcs to analytic arcs by composition and hence it is suitable to work with arc-analytic maps between arc-symmetric sets. A semialgebraic map $f:M\rightarrow N$ is blow-Nash if there is a finite sequence of blowings-up with non-singular centers $\sigma:\tilde M\rightarrow M$ such that $f\circ\sigma:\tilde M\rightarrow N$ is Nash. Let $M$ be an analytic manifold and $f:M\rightarrow\mathbb R$ a blow-analytic map, since we can lift an analytic arc by a blowing-up with non-singular center of a non-singular variety, $f$ is clearly arc-analytic. Kurdyka conjectured the converse with an additional semialgebraicity\footnote{The question is still open for the general case: is a map blow-analytic if and only if it is subanalytic and arc-analytic?} hypothesis and E. Bierstone and P. D. Milman brought us the proof in \cite{BM90}. A. Parusiński gave another proof in \cite{Par94}. 
We refer the reader to \cite{KP07} for a survey on arc-symmetric sets and arc-analytic maps.

\begin{defn}
Let $U$ be a semialgebraic open subset of $\mathbb R^N$. Then an analytic function $f:U\rightarrow\mathbb R$ is said to be \emph{Nash} if there are polynomials $a_0,\ldots,a_d$ with $a_d\neq0$ such that $$a_d(x)\left(f(x)\right)^d+\cdots+a_0(x)=0$$
\end{defn}

\begin{thm}[{\cite[Proposition 8.1.8]{BCR}}]
Let $U$ be a semialgebraic open subset of $\mathbb R^N$. Then $f:U\rightarrow\mathbb R$ is a Nash function if and only if $f$ is semialgebraic and of class $C^\infty$.
\end{thm}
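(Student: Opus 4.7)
The plan is to treat the two implications separately, both of which are classical results relating polynomial algebraicity, semialgebraicity, and analyticity.

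For the direction Nash $\Rightarrow$ $C^\infty$ semialgebraic, the $C^\infty$ assertion is immediate since real analytic functions are smooth. For semialgebraicity, I would observe that the graph of $f$ is contained in the algebraic set $V=\{(x,y)\in U\times\mathbb R:a_d(x)y^d+\cdots+a_0(x)=0\}$, whose projection to the semialgebraic open subset $\{a_d\neq 0\}\subset U$ has fibers of cardinality at most $d$. Applying cylindrical algebraic decomposition to $V$ partitions $U$ into finitely many semialgebraic cells over each of which the distinct real roots $\xi_1(x)<\cdots<\xi_k(x)$ depend continuously and semialgebraically on $x$; continuity of $f$ then forces its graph over each cell to coincide with that of a single $\xi_i$, so graph$(f)$ is a finite union of semialgebraic sets.

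For the converse direction, I would first upgrade the semialgebraic hypothesis to a polynomial relation. Since graph$(f)\subset U\times\mathbb R$ is a semialgebraic subset of dimension exactly $N$, its real Zariski closure is a proper algebraic subset of $\mathbb R^{N+1}$ and is therefore contained in some hypersurface $\{P=0\}$ with $P\in\mathbb R[x_1,\ldots,x_N,y]$ non-zero; choosing $P$ irreducible of minimal degree $d$ in $y$ writes $P(x,y)=a_d(x)y^d+\cdots+a_0(x)$ with $a_d\not\equiv 0$ and $P(x,f(x))\equiv 0$. Because $P$ is irreducible, $P$ and $\partial P/\partial y$ are coprime as polynomials in $y$ over $\mathbb R(x_1,\ldots,x_N)$, so there is a Bézout identity $AP+B\,\partial P/\partial y=R(x)$ with $R\neq 0$, which upon substitution of $y=f(x)$ shows that the semialgebraic set $U^\circ=\{x\in U:\partial P/\partial y(x,f(x))\neq 0\}$ is open and dense, its complement being contained in the algebraic hypersurface $\{R=0\}$. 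On $U^\circ$ the analytic implicit function theorem applied to $P$ yields analyticity of $f$ at once.

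The main obstacle is then to extend analyticity across $U\setminus U^\circ$, where both the $C^\infty$ assumption and the polynomial relation must be combined. At a point $x_0\in U\setminus U^\circ$, the $C^\infty$ Taylor series $\hat f\in\mathbb R[[x-x_0]]$ satisfies the formal identity $P(x,\hat f(x))=0$ because the smooth function $P(x,f(x))$ vanishes to infinite order at $x_0$. Invoking either Artin's approximation theorem or, more concretely, the Weierstrass preparation theorem to decompose $P$ into convergent irreducible factors through $(x_0,f(x_0))$ and matching $\hat f$ with exactly one such branch, one concludes that the formal solution $\hat f$ is actually convergent. Hence $f$ coincides with this convergent series on a neighborhood of $x_0$ and is analytic there, completing the argument. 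This last convergence step is precisely where the full strength of the combined hypotheses ($C^\infty$ regularity together with an algebraic relation) is genuinely used, and it is the delicate point that distinguishes this equivalence from a mere implicit function theorem application.
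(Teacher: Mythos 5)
The paper itself does not prove this statement; it is quoted verbatim from Bochnak--Coste--Roy, Proposition 8.1.8, so there is no ``paper's proof'' to compare against. Judging your argument on its own merits, the easy direction and the overall architecture of the hard direction are sound, but the final convergence step has a genuine gap.

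For Nash $\Rightarrow$ semialgebraic $C^\infty$, the CAD argument is correct; you should just note that it gives semialgebraicity of the graph only over the dense open set $U\cap\{a_d\neq0\}$, and then conclude for all of $U$ by continuity of $f$ and the fact that the closure of a semialgebraic set is semialgebraic. For the converse, passing from the semialgebraic graph to an irreducible $P$ with $P(x,f(x))\equiv 0$ and then cutting out the discriminant locus with a B\'ezout identity to apply the implicit function theorem on a dense open set $U^\circ$ is the standard and correct route.

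The gap is at $x_0\in U\setminus U^\circ$. You establish correctly that $\hat f\in\mathbb R[[x-x_0]]$ satisfies $P(x,\hat f)=0$ formally, but the conclusion that $\hat f$ is convergent does not follow from the tools as you invoke them. ``Matching $\hat f$ with exactly one branch'' of the Weierstrass factorization of $P$ is not an argument: if the irreducible Weierstrass factor $W_i$ that $\hat f$ annihilates has degree $\geq 2$ in $y$, its branches are not formal power series at all, so there is nothing to match; to rule this out you need the nontrivial fact that an irreducible Weierstrass polynomial over $\mathbb R\{x\}$ stays irreducible over $\mathbb R[[x]]$ (equivalently, that the ring $\mathbb R\{x\}$ of convergent power series is algebraically closed in $\mathbb R[[x]]$). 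Likewise, Artin approximation alone does not say that the given formal solution is convergent --- it only produces nearby convergent solutions. The step that actually closes the argument is a pigeonhole: $P(x,y)$, being a nonzero polynomial of bounded $y$-degree, has only finitely many roots in $\mathrm{Frac}(\mathbb R[[x-x_0]])$, hence only finitely many roots in $\mathbb R\{x-x_0\}$; Artin approximation furnishes, for each $c$, a convergent root agreeing with $\hat f$ to order $c$; one of these finitely many convergent roots therefore agrees with $\hat f$ to all orders, so $\hat f$ equals it and is convergent. This lemma (algebraic closedness of convergent in formal power series, or equivalently preservation of irreducible Weierstrass factorization under completion) is exactly where the theorem's content sits, and it needs to be stated and justified rather than absorbed into the phrase ``matching with a branch.''
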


\begin{defn}
A \emph{Nash submanifold} of dimension $d$ is a semialgebraic subset $M$ of $\mathbb R^p$ such that every $x\in M$ admits a Nash chart $(U,\varphi)$, i.e. there are $U$ an open semialgebraic neighborhood of $0\in\mathbb R^n$, $V$ an open semialgebraic neighborhood of $x$ in $\mathbb R^p$ and $\varphi:U\rightarrow V$ a Nash-diffeomorphism satisfying $\varphi(0)=x$ and $\varphi((\mathbb R^d\times\{0\})\cap U)=M\cap V$. 
\end{defn}

\begin{rem}
A non-singular algebraic subset $M$ of $\mathbb R^p$ has a natural structure of Nash submanifold given by the Jacobian criterion and the Nash implicit function theorem.
\end{rem}

\begin{defn}[{\cite{Kur88}}]
Let $X$ and $Y$ be arc-symmetric subsets of two analytic manifolds. Then $f:X\rightarrow Y$ is arc-analytic if for all analytic arcs $\gamma:(-\varepsilon,\varepsilon)\rightarrow X$ the composition $f\circ\gamma:(-\varepsilon,\varepsilon)\rightarrow Y$ is again an analytic arc.
\end{defn}

\begin{thm}[{\cite{BM90}}]\label{thm:blowNash}
Let $f$ be a semialgebraic map defined on a non-singular algebraic subset. Then $f$ is arc-analytic if and only if $f$ is blow-Nash.
\end{thm}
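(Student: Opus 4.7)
The plan is to treat the two implications separately, with the forward direction being nearly formal and the converse direction being the deep one.

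For the easier direction, assume $f : M \to N$ is blow-Nash, so there is a composition of blowings-up $\sigma : \tilde M \to M$ with non-singular centers such that $f \circ \sigma$ is Nash, in particular analytic. Given an analytic arc $\gamma : (-\varepsilon,\varepsilon) \to M$, I would lift it to an analytic arc $\tilde\gamma : (-\varepsilon',\varepsilon') \to \tilde M$ with $\sigma \circ \tilde\gamma = \gamma$; this is possible because $M$ is non-singular and every analytic arc on a non-singular variety can be lifted through a blowing-up with non-singular center (this is the standard arc-lifting observation already alluded to in the introduction, applied inductively to the factors of $\sigma$). Then $f \circ \gamma = (f \circ \sigma) \circ \tilde\gamma$ is a composition of analytic maps, hence analytic, so $f$ is arc-analytic.

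For the hard direction, assume $f$ is semialgebraic and arc-analytic on a non-singular algebraic $M$. First I would show that $f$ is continuous: by Kurdyka's results on arc-symmetric sets, an arc-analytic semialgebraic map on a non-singular space is continuous (the bad locus is nowhere dense of codimension at least one, and arc-analyticity forces continuous extension across it). Next, consider the graph $\Gamma_f \subset M \times N$, take a projective compactification, and let $Z$ be its Zariski closure. Apply Hironaka's resolution of singularities (in its real algebraic form, with centers that are non-singular and contained in the singular locus) to produce a non-singular $\tilde Z$ together with a composition of blowings-up $\pi : \tilde Z \to Z$ which is an isomorphism over the smooth locus. The projection $p : Z \to M$ is proper and generically an isomorphism along $\overline{\Gamma_f}$; composing with $\pi$ gives a proper birational morphism $p \circ \pi$ from a non-singular variety to $M$.

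The main obstacle is then to realize this proper birational morphism, $p \circ \pi$, as itself a finite sequence of blowings-up with non-singular centers over $M$ and to check that on this cover the map $f$ becomes Nash. For the first point I would invoke a weak factorization / dominating blow-up argument (in the spirit of Wlodarczyk and AKMW, already cited in the paper): a proper birational morphism between non-singular real algebraic varieties is dominated by a sequence of blowings-up with non-singular centers. This yields $\sigma : \tilde M \to M$ with a lift $\tilde f : \tilde M \to N$ of $f$ by construction. For the second point, the fact that $\tilde f$ is not merely semialgebraic but actually Nash (i.e., analytic) is exactly where the arc-analyticity hypothesis on $f$ enters: every analytic arc on $\tilde M$ projects to an analytic arc on $M$ (since $\sigma$ is analytic), is sent by $f$ to an analytic arc on $N$ (by the arc-analyticity of $f$), and this gives that $\tilde f$ sends analytic arcs to analytic arcs on a non-singular source; hence $\tilde f$ is arc-analytic and semialgebraic on a non-singular Nash manifold. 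A theorem of Bochnak--Kucharz (or a direct argument using the Nash Artin approximation / composite function theorems) then upgrades semialgebraic arc-analytic maps on non-singular sets to Nash maps where they are continuous and defined on an open dense set, so after possibly a further sequence of blowings-up along the remaining non-Nash locus one obtains a composition of blowings-up making $f$ Nash. The hardest step is this final upgrade from arc-analytic to Nash after desingularization: it is where one needs a genuinely analytic ingredient (such as the Bierstone--Milman composite function / uniformization theorem) rather than purely birational resolution.
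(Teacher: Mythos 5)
The paper does not prove this statement: it is cited directly from Bierstone--Milman \cite{BM90}, so there is no internal proof to compare your attempt against. Evaluating the attempt on its own merits:

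Your forward direction (blow-Nash $\Rightarrow$ arc-analytic) is correct and standard: arcs on a non-singular source lift through blowings-up with non-singular centers, and composing with a Nash map preserves analyticity. Your claim that a \emph{semialgebraic} arc-analytic map on a non-singular algebraic set is continuous is also correct (and the semialgebraicity is essential there, in view of the non-continuous arc-analytic example of \cite{BMP91}).

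However, the hard direction (arc-analytic semialgebraic $\Rightarrow$ blow-Nash) has a genuine gap that your own argument exposes. After the graph-resolution and domination step, what you obtain is a non-singular variety $\tilde M$, a composition of blowings-up $\sigma : \tilde M \to M$, and a lift $\tilde f = f \circ \sigma$ which is still only \emph{semialgebraic and arc-analytic on a non-singular source}. That is exactly the hypothesis you started with; resolving the graph closure buys you nothing, because the issue was never the singularity of the source. In particular, the statement you then appeal to --- that a semialgebraic arc-analytic map on a non-singular set becomes Nash ``after a further sequence of blowings-up along the remaining non-Nash locus'' --- is precisely the theorem you are trying to prove, so the argument is circular at the decisive step. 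There is also no theorem of the form ``semialgebraic $+$ arc-analytic $+$ non-singular source $\Rightarrow$ Nash'' (consider $f(x,y)=x^3/(x^2+y^2)$, extended by $0$: semialgebraic, arc-analytic, continuous on $\mathbb R^2$, but not analytic at the origin); what is true, and what you implicitly use, is only that the non-analyticity locus is nowhere dense. The genuine content of Bierstone--Milman is the convergence step: arc-analyticity yields, at each point, a well-defined formal Taylor expansion, and one must show that after a suitable sequence of blowings-up (making relevant discriminant-type data normal crossings) these formal expansions become honest convergent power series. That delicate formal-to-convergent argument, or Parusiński's alternative via the subanalytic preparation theorem \cite{Par94}, is what is missing; graph resolution plus weak factorization does not supply it.
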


\begin{rem}\label{rem:blowNashold}
Let $f:X\rightarrow Y$ be a semialgebraic arc-analytic map between algebraic sets. Then $f$ is blow-Nash even if $X$ is singular. Indeed we may first use a resolution of singularities $\rho:U\rightarrow X$ given by a sequence of blowings-up with non-singular centers \cite{Hir64} and apply Theorem \ref{thm:blowNash} to $f\circ\rho:U\rightarrow Y$.
\end{rem}

\begin{rem}
If $M$ is a non-singular algebraic set and $\rho:\tilde M\rightarrow M$ the blowing-up of $M$ with a non-singular center, it is well known that we can lift an arc on $M$ by $\rho$ to an arc on $\tilde M$. This result is obviously false for a singular algebraic set as shown in the following examples. However, if $X$ is a singular algebraic set and $\rho:\tilde X\rightarrow X$ the blowing-up of $X$ with a non-singular center we can lift an arc on $X$ not entirely included in the center\footnote{Such an arc meets the center only at isolated points since it is algebraic and hence arc-symmetric.} and this lifting is unique. 
\end{rem}

\begin{eg}
Consider the Whitney umbrella $X=V(x^2-zy^2)$ and $\rho:\tilde X\rightarrow X$ the blowing-up along the singular locus $I(X_\sing)=(x,y)$. Then we can't lift by $\rho$ an arc included in the handle $\{x=0,\,y=0,\,z<0\}$ ($\rho$ is not even surjective).
\end{eg}

\begin{eg}
This phenomenon still remains in the pure dimensional case. Let $X=V(x^3-zy^3)$. Then $X$ is of pure dimension $2$ and the blowing-up $\rho:\tilde X\rightarrow X$ along the singular locus $I(X_\sing)=(x,y)$ is surjective. However we can't lift the (germ of) analytic arc $\gamma(t)=(0,0,t)$ to an analytic arc. In the $y$-chart, $\tilde X=\{(X,Y,Z)\in\mathbb R^3,\,X^3=Z\}$ and $\rho(X,Y,Z)=(XY,Y,Z)$. Then the lifting of $\gamma$ should have the form $\tilde\gamma(t)=(t^{\frac{1}{3}},0,t)$.
\end{eg}

\begin{rem}\label{rem:Holder}
A continuous subanalytic map $f:U\rightarrow V$ is locally Hölder, i.e. for each compact subset $K\subset U$, there exist $\alpha>0$ and $C>0$ such that for all $x,y\in K$, $\|f(x)-f(y)\|\le C\|x-y\|^\alpha$. See for instance \cite{Har78}, it's a consequence of \cite[\S9, Inequality III]{Hir73}. See also \cite[Corollary 6.7]{BM88}. Or we can directly use Łojasiewicz inequality \cite[Theorem 6.4]{BM88} with $(x,y)\mapsto|f(x)-f(y)|$ and $(x,y)\mapsto|x-y|$.
\end{rem}

The following result will be useful. 
\begin{prop}\label{prop:Puiseux}
Let $f:X\rightarrow Y$ be a surjective proper subanalytic map (resp. proper semialgebraic map) and $\gamma:[0,\varepsilon)\rightarrow Y$ a real analytic (resp. Nash) arc. Then there exist $m\in\mathbb N_{>0}$ and $\tilde\gamma:[0,\delta)\rightarrow X$ analytic (resp. Nash) with $\delta^m\le\varepsilon$ such that $f\circ\tilde\gamma(t)=\gamma(t^m)$.
\end{prop}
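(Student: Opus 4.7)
The plan is to realize $\tilde\gamma$ by a curve-selection argument applied to the incidence set
\[ W := \{(t,x) \in [0,\varepsilon) \times X : f(x) = \gamma(t)\}, \]
which is subanalytic (resp.\ semialgebraic) because $f$ and $\gamma$ are. First I would produce a base point $x_0 \in f^{-1}(\gamma(0))$ lying in the closure of the ``positive-time'' part of $W$: fix a sequence $t_n \downarrow 0$; by surjectivity of $f$, $f^{-1}(\gamma(t_n))$ is non-empty, so pick $x_n$ in it; by properness of $f$, the sequence $(x_n)$ lies in the compact set $f^{-1}(\gamma([0,t_1]))$ and a subsequence converges to some $x_0 \in f^{-1}(\gamma(0))$. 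Thus $(0,x_0) \in \overline{W \cap ((0,\varepsilon)\times X)}$.

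Next I would invoke the analytic curve selection lemma (cf.\ \cite{BM88}) in the subanalytic case, or the Nash curve selection lemma (cf.\ \cite{BCR}) in the semialgebraic case, applied to $W \cap ((0,\varepsilon)\times X)$ at the boundary point $(0,x_0)$. This yields an analytic (resp.\ Nash) arc $\alpha : [0,\eta) \to \overline{W}$ with $\alpha(0) = (0,x_0)$ and $\alpha((0,\eta)) \subset W$. Writing $\alpha(s) = (t(s), \tilde x(s))$, the condition $t(s) > 0$ for $s \in (0,\eta)$ forces $t \not\equiv 0$, so $t(s) = s^m u(s)$ for some $m \geq 1$ and an analytic (resp.\ Nash) germ $u$ with $u(0) > 0$.

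To conclude I would Puiseux-reparametrize: since $u(0) > 0$, $v(s) := u(s)^{1/m}$ is analytic (resp.\ Nash) near $0$, and $\tau := s\, v(s)$ is an analytic (resp.\ Nash) local diffeomorphism fixing $0$ and satisfying $t(s) = \tau^m$. Inverting as $s = \psi(\tau)$ and setting $\tilde\gamma(\tau) := \tilde x(\psi(\tau))$ gives
\[ f \circ \tilde\gamma(\tau) = \gamma\bigl(t(\psi(\tau))\bigr) = \gamma(\tau^m), \]
and it suffices to take $\delta > 0$ small enough that $\tilde\gamma$ is defined on $[0,\delta)$ and $\delta^m \le \varepsilon$.

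The only delicate point is ensuring that the arc delivered by curve selection is truly transverse to $\{t=0\}$, i.e.\ that $t(s) \not\equiv 0$; this is precisely why one must pick $x_0$ as a limit of preimages of $\gamma(t_n)$ with $t_n > 0$ (using surjectivity plus properness) and then apply curve selection to the open subset $W \cap ((0,\varepsilon)\times X)$ rather than to $W$ itself. Once this is arranged, the remainder is standard: curve selection provides the lifting up to reparametrization, and a direct Puiseux computation gives the required normal form $\gamma(\tau^m)$.
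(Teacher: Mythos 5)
Your proof is correct and takes essentially the same route as the paper: your incidence set $W$ is the fiber product $\tilde X = X\times_Y[0,\varepsilon)$, the sequence $(t_n,x_n)$ produced from surjectivity plus properness plays the role of the paper's observation that $\clos{X_1}\setminus X_1$ is nonempty and lies over $t=0$, curve selection gives the same arc, and your explicit $t(s)=s^m u(s)$, $\tau = s\,u(s)^{1/m}$ reparametrization is an unpacked version of the paper's appeal to the Puiseux theorem to invert $h=\tilde f\circ\gamma_1$. The only difference is one of explicitness, not of substance.
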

\begin{proof}
The proof is divided into two parts. First we use the properness of $f$ to lift $\gamma$ to an arc on $X$ and then we conclude thanks to Puiseux theorem.

Consider the following diagram
$$\xymatrixcolsep{5pc}\xymatrix{
X \ar[d]_f & \tilde X=X\times_Y[0,\varepsilon) \ar[l]_{\mathrm{pr}_X} \ar[d]^{\tilde f} \\
Y & [0,\varepsilon) \ar[l]^{\gamma}
}$$
Let $X_1=\tilde f^{-1}((0,\varepsilon))$. Since $f$ is proper, $\clos{X_1}\setminus X_1\subset\tilde X$. Let $x_0\in\clos{X_1}\setminus X_1$, then by the curve selection lemma (\cite[Proposition 8.1.13]{BCR} for the semialgebraic case) there exists $\gamma_1:[0,\eta)\rightarrow \tilde X$ analytic (resp. Nash) such that $\gamma_1(0)=x_0$ and $\gamma_1((0,\eta))\subset X_1$.
We have the following diagram
$$\xymatrix{
\tilde X \ar[d]_{\tilde f} & [0,\eta) \ar[l]_{\gamma_1} \ar[dl]^h \\
[0,\varepsilon) &
}$$
Then, $h(0)=0$ and $h((0,\eta))\subset(0,\varepsilon)$. Hence there exists $\alpha\in(0,\eta)$ such that $h:[0,\alpha)\rightarrow[0,\beta)$ is a subanalytic (resp. semialgebraic) homeomorphism.

By Puiseux theorem (\cite[Proposition 8.1.12]{BCR} for the semialgebraic case; see also \cite{Paw84}), there exist $m\in\mathbb N_{>0}$ and $\delta\le\beta^{\frac{1}{m}}$ such that $h^{-1}(t^m)$ is analytic (resp. Nash) for $t\in[0,\delta)$. \\
Finally, $\tilde\gamma:[0,\delta)\rightarrow X$ defined by $\tilde\gamma(t)=\mathrm{pr}_X\gamma_1h^{-1}(t^m)$ satisfies $f\circ\tilde\gamma(t)=\gamma(t^m)$.
\end{proof}

In the singular case we will work with a slightly different framework.
\begin{defn}\label{defn:genAA}
Let $X$ and $Y$ be two algebraic sets. A map $f:X\rightarrow Y$ is said to be \emph{generically arc-analytic in dimension $d=\dim X$} if there exists an algebraic subset $S$ of $X$ with $\dim S<\dim X$ such that for all analytic arc $\gamma:(-\varepsilon,\varepsilon)\rightarrow X$ not entirely included\footnote{$\gamma^{-1}(S)\neq(-\varepsilon,\varepsilon)$} in $S$, $f\circ\gamma:(-\varepsilon,\varepsilon)\rightarrow Y$ is analytic.
\end{defn}

If $X$ is non-singular, these maps are exactly the arc-analytic ones.
\begin{lemma}\label{lem:genAA}
Let $X$ be a non-singular algebraic set of dimension $d$\footnote{We mean that every point of $X$ is non-singular of dimension $d$.} and $Y$ an algebraic set. Let $f:X\rightarrow Y$ be a continuous semialgebraic map. If $f$ is generically arc-analytic in dimension $d$ then $f$ is arc-analytic.
\end{lemma}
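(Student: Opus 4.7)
Only the case where $\gamma$ is entirely contained in $S$ requires proof, since otherwise $f\circ\gamma$ is analytic by the generic arc-analyticity hypothesis. So I assume $\gamma((-\varepsilon,\varepsilon))\subset S$ and work locally around $x_0:=\gamma(0)$; using the non-singularity of $X$, I identify a neighborhood of $x_0$ in $X$ with an open subset $U\subset\mathbb{R}^d$ via a Nash diffeomorphism.

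The idea is to embed $\gamma$ into a two-parameter family of analytic arcs whose generic members avoid $S$. Since $\dim S<d$, I can pick a direction $v\in\mathbb{R}^d$ with the following property: for every analytic arc $(\alpha(\sigma),\beta(\sigma))$ in $\mathbb{R}^2$ with $\beta\not\equiv 0$, the perturbed arc $\sigma\mapsto\gamma(\alpha(\sigma))+\beta(\sigma)v$ is an analytic arc in $X$ not entirely contained in $S$; such a $v$ exists as a generic choice because $\dim S<d$. Setting $G(t,s):=f(\gamma(t)+sv)$, the map $G$ is continuous and semialgebraic on a bidisk around $(0,0)$, and by the hypothesis applied to the arcs above, $G$ is arc-analytic at every analytic arc of $\mathbb{R}^2$ not lying on the $t$-axis. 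In particular, by Hartogs' separate analyticity theorem, $G$ is real analytic on the open set $\{s\neq 0\}$.

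It then remains to deduce that $G(t,0)=f\circ\gamma(t)$ is analytic in $t$, which is the main obstacle. Since $f\circ\gamma$ is continuous and semialgebraic, it admits on each side of $0$ a Puiseux expansion $\sum_n a_n t^{n/q}$, and the task is to prove $q=1$. The plan is to combine the analyticity of $G$ on $\{s\neq 0\}$, the continuity of $G$ across $\{s=0\}$, and the local Hölder estimates available for semialgebraic continuous maps (Remark \ref{rem:Holder}) to obtain uniform control on the Taylor coefficients of $G(\cdot,s)$ as $s\to 0$, and then to invoke a semialgebraic removable-singularity argument to rule out fractional Puiseux exponents for $G(\cdot,0)$. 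The crux is this last passage from off-axis analyticity plus continuity across the axis to honest analyticity on the axis, and it is precisely here that the non-singularity of $X$ (allowing perturbation in an arbitrary direction $v\in\mathbb{R}^d$) is used in an essential way.
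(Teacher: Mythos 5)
The proposal is incomplete and contains two substantive gaps; in addition, the intermediate claims it rests on are not correct as stated.

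First, the claim that a single \emph{generic} direction $v$ makes every perturbed arc $\sigma\mapsto\gamma(\alpha(\sigma))+\beta(\sigma)v$ with $\beta\not\equiv 0$ avoid $S$ is not justified and does not follow simply from $\dim S<d$. The paper's proof is much more modest here: it fixes a single instant $t_0\neq 0$, picks $\tilde\eta\notin C_{\gamma(t_0)}S$ (the tangent cone of $S$ at the single point $\gamma(t_0)$), and shows that the particular one-parameter family $\gamma(t)+st_0^N\tilde\eta$ leaves $S$ for $s$ small; this pointwise tangent-cone argument is both what is needed and what is provable. Second, and more seriously, the step ``arc-analytic on $\{s\neq 0\}$ $\Rightarrow$ real-analytic on $\{s\neq 0\}$ by Hartogs'' is false. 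Arc-analytic does not imply (separately) analytic: a semialgebraic arc-analytic function on a non-singular set is only analytic on a \emph{dense open} subset (Kurdyka), and there is no real-variable Hartogs theorem upgrading separate analyticity to joint analyticity without extra hypotheses. So the off-axis analyticity of $G$ is not established. Third, the decisive step — passing from off-axis control of $G$ plus continuity across $\{s=0\}$ to the non-existence of fractional Puiseux exponents of $G(\cdot,0)$ — is only announced as ``a semialgebraic removable-singularity argument'' and not carried out; this is exactly the heart of the lemma.

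The paper instead argues directly. It writes $f\circ\gamma$ as one-sided Puiseux series, isolates the first exponent $p/q$ with $m<p/q<m+1$ (or the first disagreement between the two one-sided expansions), and uses the H\"older estimate of Remark \ref{rem:Holder} to find $N$ so that $f(\gamma(t)+t^N\delta(t))\equiv f(\gamma(t))\pmod{t^{m+1}}$ for \emph{every} analytic $\delta$. It then exhibits a single such $\delta$ with $\gamma+t^N\delta\not\subset S$ via the tangent-cone argument; for that perturbed arc $f$ is analytic by generic arc-analyticity, yet its expansion agrees with that of $f\circ\gamma$ up to $t^{m+1}$ and so must carry the offending term. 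This is a direct contradiction with no two-parameter family and no analyticity-propagation step. If you want to repair your approach, you would have to replace the Hartogs appeal with a correct argument and actually prove the removable-singularity claim; but you will likely be led back to the jet-comparison-via-H\"older argument of the paper, which already gives the conclusion with a single perturbed arc.
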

\begin{proof}
Let $S$ be as in Definition \ref{defn:genAA}. By the Jacobian criterion and the Nash implicit function theorem we may assume that $S$ is locally a Nash subset of $\mathbb R^d$. Taking the Zariski closure we may moreover assume that $S$ is an algebraic subset of $\mathbb R^d$ since it doesn't change the dimension. Let $\gamma:(-\varepsilon,\varepsilon)\rightarrow\mathbb R^d$ be an analytic arc entirely included in $S$.

As in \cite[Corollaire 2.7]{Kur88-2}, by Puiseux theorem, we may assume that $$\begin{array}{lcl}f(\gamma(t))&=&\displaystyle\sum_{i\ge0}b_it^{\frac i p},\,t\ge0 \\ f(\gamma(t))&=&\displaystyle\sum_{i\ge0}c_i(-t)^{\frac i r},\,t\le0\end{array}$$
By \cite[Corollaire 2.8 \& Corollaire 2.9]{Kur88-2}, two phenomena may prevent $f(\gamma(t))$ from being analytic: either one of these expansions has a non-integer exponent or these expansions don't coincide.

To handle the first case, we assume that one of these expansions, for instance for $t\ge0$, has a non-integer exponent, i.e.
$$f(\gamma(t))=\sum_{i=0}^mb_it^i+bt^\frac{p}{q}+\cdots,\,b\neq0,\,m<\frac{p}{q}<m+1,\,t\ge0$$It follows from Remark \ref{rem:Holder} there exists $N\in\mathbb N$ such that for every analytic arc $\delta$ we have $f(\gamma(t)+t^N\delta(t))\equiv f(\gamma(t))\mod t^{m+1}$. We are going to prove that for $\eta\in\mathbb R^d$ generic, the arc $\tilde\gamma(t)=\gamma(t)+t^N\eta$ is not entirely included in $S$ in order to get a contradiction since $f(\tilde\gamma(t))\equiv f(\gamma(t))\mod t^{m+1}$.

Let $t_0\in(-\varepsilon,\varepsilon)\setminus\{0\}$. Since $\dim S<d$, there is $\tilde\eta\in\mathbb R^d\setminus C_{\gamma(t_0)}S$ where $C_{\gamma(t_0)}S$ is the tangent cone of $S$ at $\gamma(t_0)$. Thus there exists $f\in I(S)$ with $f(\gamma(t_0)+x)=f_m(x)+\cdots+f_{m+r}(x)$ where $\deg f_i=i$ and such that $f_m(\tilde\eta)\neq0$. Then $f\left(\gamma(t_0)+st_0^N\tilde\eta\right)=\left(st_0^N\right)^mf_m(\tilde\eta)+\left(st_0^N\right)^{m+1}g(s,t)$ and hence for $s$ small enough the arc $\gamma(t)+t^Ns\tilde\eta$ isn't entirely included in $S$.

Then we prove that the expansions coincide in a similar way. Assume that the expansions are different, i.e.
$$\begin{array}{lcl}f(\gamma(t))&=&\displaystyle\sum_{i=0}^{m-1}a_it^i+bt^m+\cdots,\,t\ge0 \\ f(\gamma(t))&=&\displaystyle\sum_{i=0}^{m-1}a_it^i+ct^m+\cdots,\,t\le0\end{array}$$ with $b\neq c$. As in the previous case, we may construct an arc $\tilde\gamma$ not entirely included in $S$ such that $f\gamma(t)$ and $f\tilde\gamma(t)$ coincide up to order $m+1$. That leads to a contradiction.
\end{proof}

\begin{rem}
If $\dim\Sing(X)=0$ then a generically arc-analytic map $X\rightarrow Y$ is also arc-analytic since the analytic arcs contained in the singular locus are constant.
\end{rem}

\begin{rem}
The previous proof fails when $X$ isn't assumed to be non-singular. Let $X=V(x^3-zy^3)$ and $S=X_\sing=O_z$. Consider (germ of) analytic arc $\gamma(t)=(0,0,t)$ entirely included in $S$. Given any $N\in\mathbb N$ we can't find $\eta(t)$ such that $\tilde\gamma(t)=\gamma(t)+t^N\eta(t)$ isn't entirely included in $S$. Indeed, if we inject the coordinates of $\tilde\gamma$ in the equation $x^3=zy^3$ we get a contradiction considering the orders of vanishing.
\end{rem}

\begin{rem}
A continuous semialgebraic generically arc-analytic in dimension $d=\dim X$ map $f:X\rightarrow Y$ may not be arc-analytic if $\dim\Sing(X)\ge1$. Indeed, let $X=V(x^3-zy^3)$ and $f:X\rightarrow\mathbb R$ be defined by $f(x,y,z)=\frac{x}{y}$. Then $f(0,0,t)=t^{\frac{1}{3}}$ is not analytic.
\end{rem}

In the non-singular case, by Theorem \ref{thm:blowNash}, the blow-Nash maps are exactly the semialgebraic arc-analytic ones. With the following proposition, we notice that more generally the blow-Nash maps are exactly the semialgebraic generically arc-analytic ones.  
\begin{prop}\label{prop:blowNash}
Let $X$ be an algebraic set of dimension $d$. Let $f:X\rightarrow Y$ be a semialgebraic map which is continuous on $\overline{\Reg_dX}$. Then $f$ is generically arc-analytic in dimension $d$ if and only if it is blow-Nash.
\end{prop}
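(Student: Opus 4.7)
The plan is to prove both implications, with $(\Leftarrow)$ being essentially a direct consequence of the arc-lifting property recalled in the remark preceding Definition~\ref{defn:genAA}, while the forward direction $(\Rightarrow)$ reduces to the non-singular case by combining Hironaka's resolution of singularities with Lemma~\ref{lem:genAA} and Theorem~\ref{thm:blowNash}.

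For $(\Leftarrow)$, suppose $\sigma:\tilde X\to X$ is a finite composition of blowings-up with non-singular centers $C_1,C_2,\dots,C_k$ such that $f\circ\sigma$ is Nash. I would take $S$ to be an algebraic subset of $X$ of dimension $<d$ containing $X\setminus\overline{\Reg_dX}$ together with the successive $\sigma$-images in $X$ of the centers $C_i$; since each $C_i$ has codimension at least one in its $d$-dimensional ambient variety, $\dim S<d$. By the remark recalled just before Definition~\ref{defn:genAA}, an analytic arc on the total space of a blowing-up with non-singular center that is not entirely contained in the center can be uniquely lifted through the blowing-up. Iterating through the $k$ successive blowings-up, any analytic arc $\gamma:(-\varepsilon,\varepsilon)\to X$ not entirely contained in $S$ admits a unique analytic lift $\tilde\gamma:(-\varepsilon,\varepsilon)\to\tilde X$ with $\sigma\circ\tilde\gamma=\gamma$. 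Then $f\circ\gamma=(f\circ\sigma)\circ\tilde\gamma$ is analytic, so $f$ is generically arc-analytic in dimension $d$.

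For $(\Rightarrow)$, let $S\subset X$ be an algebraic subset of dimension $<d$ witnessing that $f$ is generically arc-analytic. I apply Hironaka's theorem to obtain a resolution $\rho:U\to X$, itself a finite composition of blowings-up with non-singular centers, such that $U$ is a non-singular algebraic set of dimension $d$. Letting $S'$ be the Zariski closure in $U$ of $\rho^{-1}\bigl(S\cup(X\setminus\overline{\Reg_dX})\bigr)$, I argue that $\dim S'<d$: since $\rho$ is proper and birational over each top-dimensional component of $X$ and the set being preimaged has dimension $<d$, it is nowhere dense in $U$. I then verify that $f\circ\rho:U\to Y$ is continuous (using continuity of $f$ on $\overline{\Reg_dX}$ together with $\rho(U\setminus S')\subset\overline{\Reg_dX}$), semialgebraic, and generically arc-analytic in dimension $d$ with respect to $S'$: any analytic arc on $U$ not entirely in $S'$ projects via $\rho$ to an analytic arc on $X$ not entirely in $S$, and by hypothesis $f$ composed with such an arc is analytic. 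By Lemma~\ref{lem:genAA}, $f\circ\rho$ is then arc-analytic on the non-singular algebraic set $U$, and by Theorem~\ref{thm:blowNash} it is blow-Nash: there exists $\tau:\tilde U\to U$, a further composition of blowings-up with non-singular centers, such that $f\circ\rho\circ\tau$ is Nash. Since $\rho\circ\tau:\tilde U\to X$ is itself a finite sequence of blowings-up with non-singular centers and $f\circ(\rho\circ\tau)$ is Nash, $f$ is blow-Nash.

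The main obstacle lies in verifying the hypotheses of Lemma~\ref{lem:genAA} for $f\circ\rho$. The dimension bound $\dim S'<d$ is the crucial point and relies on $\rho$ being birational on each top-dimensional irreducible component of $X$; the continuity of $f\circ\rho$ on all of $U$ is handled by having included $X\setminus\overline{\Reg_dX}$ in the subset over which we take the preimage, so that outside $S'$ the map $f$ is applied only on $\overline{\Reg_dX}$ where it is continuous. The direction $(\Leftarrow)$ is more routine, the only care being to keep track of the algebraic (not merely semialgebraic) nature of $S$.
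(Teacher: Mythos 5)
Your proof is correct and follows essentially the same route as the paper: for the forward direction, pull back through a Hironaka resolution $\rho:U\to X$, apply Lemma~\ref{lem:genAA} to $f\circ\rho$ on the non-singular $U$, and then invoke Theorem~\ref{thm:blowNash}; for the reverse direction, lift arcs through the successive blowings-up. Your version spells out more explicitly the choice of the exceptional set $S$ (resp.\ $S'$), tracking images of the centers and the locus $X\setminus\overline{\Reg_dX}$, and verifying $\dim S'<d$, which the paper leaves implicit, but there is no genuine difference in method. (Both versions gloss over the same minor point: if $X$ is not pure-dimensional, one should really restrict attention to the $d$-dimensional components of $U$ before applying Lemma~\ref{lem:genAA}, whose hypothesis requires every point to be non-singular of dimension $d$.)
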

\begin{proof}
Assume that $f$ is generically arc-analytic. Let $\rho:U\rightarrow X$ be a resolution of singularities given by a sequence of blowings-up with non-singular centers, then $f\circ\rho:U\rightarrow Y$ is semialgebraic and generically arc-analytic with $U$ non-singular. Thus $f\circ\rho$ is arc-analytic by Lemma \ref{lem:genAA}. By Theorem \ref{thm:blowNash}, there exists a sequence of blowings-up with non-singular centers $\eta:M\rightarrow U$ such that $f\circ\rho\circ\eta$ is Nash. Finally $f\circ\sigma$ is Nash where $\sigma=\rho\circ\eta:M\rightarrow X$ is a sequence of blowings-up with non-singular centers.

Assume that $f$ is blow-Nash. Then there is $\sigma:M\rightarrow X$ a sequence of blowings-up with non-singular centers such that $f\circ\sigma:M\rightarrow Y$ is Nash. Let $\gamma$ be an arc on $X$ not entirely included in the singular locus of $X$ and the center of $\sigma$, then there is $\tilde\gamma$ an arc on $M$ such that $\gamma=\sigma(\tilde\gamma)$. Thus $f(\gamma(t))=f\circ\sigma(\tilde\gamma(t))$ is analytic.
\end{proof}

\subsection{Arcs \& jets}\label{sect:arcsjets}
Arc spaces and truncations of arcs were first introduced by J. F. Nash in 1964 \cite{Nas95} and their study has gained new momentum with the works of M. Kontsevich \cite{Kon95}, J. Denef and F. Loeser \cite{DL99} on motivic integration. We can notice that K. Kurdyka \cite{Kur88}, A. Nobile \cite{Nob91}, M. Lejeune-Jalabert \cite{L-J90} \cite{GSLJ96}, M. Hickel \cite{Hic93} and others studied arc space and jet spaces before the advent of motivic integration. Most of these works concern the relationship between the singularities of a variety and its jet spaces. \\

In this section, we define the arc space and the jet spaces of a real algebraic set. We first work with the whole ambient Euclidean space and then use the equations of the algebraic set to define arcs and jets on it. Finally we will give and prove a collection of results concerning these objects. \\

The arc space on $\mathbb R^N$ is defined by $$\mathcal L\left(\mathbb R^N\right)=\left\{\gamma:(\mathbb R,0)\rightarrow\mathbb R^N,\,\gamma\text{ analytic}\right\}$$ and, for $n\in\mathbb N$, the set of $n$-jets on $\mathbb R^N$ is defined by $$\mathcal L_n\left(\mathbb R^N\right)=\quotient{\mathcal L\left(\mathbb R^N\right)}{\sim_n}$$ where $\gamma_1\sim_n\gamma_2$ if and only if $\gamma_1\equiv\gamma_2\mod t^{n+1}$. Obviously, $\mathcal L_n\left(\mathbb R^N\right)\simeq\left(\quotient{\mathbb R\{t\}}{t^{n+1}}\right)^N$. We also consider the truncation maps $\pi_n:\mathcal L\left(\mathbb R^N\right)\rightarrow\mathcal L_n\left(\mathbb R^N\right)$ and $\pi^m_n:\mathcal L_m\left(\mathbb R^N\right)\rightarrow\mathcal L_n\left(\mathbb R^N\right)$, where $m>n$. These maps are clearly surjective. \\

Next, assume that $X\subset\mathbb R^N$ is an algebraic subset. The set of analytic arcs on $X$ is $$\mathcal L(X)=\left\{\gamma\in\mathcal L\left(\mathbb R^N\right),\,\forall f\in I(X),\,f(\gamma(t))=0\right\}$$ and, for $n\in\mathbb N$, the set of $n$-jets on $X$ is $$\mathcal L_n(X)=\left\{\gamma\in\mathcal L_n\left(\mathbb R^N\right),\,\forall f\in I(X),\,f(\gamma(t))\equiv0\mod t^{n+1}\right\}$$
When $X$ is singular, we will see that the truncation maps may not be surjective.

\begin{eg}
Let $X\subset\mathbb R^N$ be an algebraic subset, then $\mathcal L_0(X)\simeq X$ and $\mathcal L_1(X)\simeq T^{\mathrm{Zar}}X=\bigsqcup T^{\mathrm{Zar}}_xX$. Indeed, we just apply Taylor expansion to $f(a+bt)$ where $f\in I(X)$ (or we may directly use that the Zariski tangent space at a point is given by the linear parts of the polynomials $f\in I(X)$ after a translation).
\end{eg}

The following lemma is useful to find examples which are hypersurfaces since the constructions of arc space and jet spaces on an algebraic set are algebraic. See \cite[Theorem 4.5.1]{BCR} for a more general result with another proof. We may find similar results for non-principal ideals in \cite[Proposition 3.3.16, Theorem 4.1.4]{BCR}. See also \cite[\S6]{Lam84}.
\begin{lemma}\label{lem:signcriterion}
Let $f\in\mathbb R[x_1,\ldots,x_N]$ be an irreducible polynomial which changes sign, then $I(V(f))=(f)$.
\end{lemma}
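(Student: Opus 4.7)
My plan is to reduce the statement to the classical complex Hilbert Nullstellensatz, in three steps. The strategy rests on the observation that if $f$ changes sign then $f$ must remain irreducible over $\mathbb{C}$ as well, so $V_\mathbb{C}(f)$ is an irreducible complex hypersurface, and one can recover $I(V(f)) = (f)$ by showing that $V(f) \subset \mathbb{R}^N$ is Zariski-dense in $V_\mathbb{C}(f)$.

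First I would verify the complex irreducibility. If $f = g h$ in $\mathbb{C}[x_1,\ldots,x_N]$ with $g,h$ non-constant, then $f = \bar f$ combined with unique factorization leaves two options. Either $g$ and $h$ are, up to scalars, real polynomials, contradicting the irreducibility of $f$ in $\mathbb{R}[x_1,\ldots,x_N]$; or $h = \lambda \bar g$ for some $\lambda \in \mathbb{C}^*$, in which case writing $g = p + iq$ with $p,q \in \mathbb{R}[x_1,\ldots,x_N]$ yields $f = \mu(p^2+q^2)$ for some $\mu \in \mathbb{R}^*$, so $f$ has constant sign, contradicting the hypothesis.

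Next I would exhibit a regular real point of $V(f)$, i.e.\ a point where $\nabla f$ does not vanish. Since $f$ is irreducible in characteristic zero, some $\partial_i f$ is non-zero and, being of smaller degree than $f$, is not divisible by $f$; hence the complex singular locus $V_\mathbb{C}(f,\partial_1 f,\ldots,\partial_N f)$ is a proper algebraic subset of the irreducible complex hypersurface $V_\mathbb{C}(f)$ and has complex dimension at most $N-2$. Its real trace $\Sigma$ therefore has real dimension at most $N-2$. If $V(f) \subset \Sigma$, then $\mathbb{R}^N \setminus V(f)$ would be connected (the complement in $\mathbb{R}^N$ of a semialgebraic set of codimension at least $2$ is connected), and $f$, which does not vanish on this complement, would have constant sign there by the intermediate value theorem, contradicting the sign-changing hypothesis.

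Finally I would conclude: given $g \in I(V(f))$, the implicit function theorem at a regular real point $p$ displays $V(f)$ locally as a smooth real $(N-1)$-submanifold on which $g$ vanishes. Since this submanifold sits inside the irreducible complex hypersurface $V_\mathbb{C}(f)$ with matching real dimension, the identity principle for polynomial functions on an irreducible complex variety forces $g$ to vanish on the whole of $V_\mathbb{C}(f)$. The complex Nullstellensatz then yields $g \in (f)$ in $\mathbb{C}[x_1,\ldots,x_N]$ (as $(f)$ is prime), and the reality of $f$ and $g$ together with uniqueness of polynomial division forces the quotient to lie in $\mathbb{R}[x_1,\ldots,x_N]$. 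The hard step is the middle one: exhibiting a smooth real point of $V(f)$ is the only place where the sign-changing hypothesis is used in an essential geometric way, and it combines a complex dimension count with a real-topological connectedness argument.
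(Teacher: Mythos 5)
Your proof is correct, but it takes a genuinely different route from the one in the paper. The paper reproduces Lam's elementary argument: after an affine change of coordinates one arranges $f(a,b_1)<0<f(a,b_2)$, observes that if $f\nmid g$ then $f$ and $g$ are coprime in the PID $\mathbb R(x_1,\ldots,x_{N-1})[x_N]$, clears denominators in a B\'ezout identity to obtain $\varphi_0 f + \gamma_0 g = h$ with $0\neq h\in\mathbb R[x_1,\ldots,x_{N-1}]$, and then applies the intermediate value theorem on a whole neighborhood $V$ of $a$ to produce zeros $(v,b_v)$ of both $f$ and $g$, forcing $h\equiv 0$ on $V$, a contradiction. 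Your route instead factors through the complex Nullstellensatz: you first show $f$ stays irreducible over $\mathbb C$ (using the sign hypothesis to rule out $f=\mu(p^2+q^2)$), then exhibit a smooth real point of $V(f)$ (via a complex dimension count for the singular locus plus connectedness of the complement of a codimension-$2$ semialgebraic set), and finally invoke the identity principle along a totally real $(N-1)$-dimensional slice of the irreducible complex hypersurface to conclude that any $g\in I(V(f))$ vanishes on $V_{\mathbb C}(f)$, hence $f\mid g$. Each step is sound. The paper's proof is the lighter one, using only B\'ezout and IVT; yours requires the complex Nullstellensatz, a dimension bound for real traces, and the identity principle for holomorphic functions on totally real submanifolds, but in exchange it makes the geometry transparent: it isolates the two classical obstructions to real Nullstellensatz (complex reducibility and absence of smooth real points) and shows the sign-change hypothesis kills both. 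Your identification of the middle step as the place where sign-change enters essentially matches the role of the IVT in the paper's argument.
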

\begin{proof}
The following proof comes from \cite[Lemma 6.14]{Lam84}. After an affine change of coordinates, we may assume that $f(a,b_1)<0<f(a,b_2)$ with $a=(a_1,\ldots,a_{N-1})$. Let $g\in I(V(f))$ and assume that $f\nmid g$ in $\mathbb R[x_1,\ldots,x_N]$. In the PID (and hence UFD) $\mathbb R(x_1,\ldots,x_{N-1})[x_N]$, $f$ is also irreducible and $f\nmid g$ too. In this PID, we may find $\varphi$ and $\gamma$ such that $\varphi f+\gamma g=1$. Let $\varphi=\varphi_0/h$ and $\gamma=\gamma_0/h$ with $0\neq h\in\mathbb R[x_1,\ldots,x_{N-1}]$ and $\varphi_0,\gamma_0\in\mathbb R[x_1,\ldots,x_{N-1}][x_N]$. Then $\varphi_0f+\gamma_0g=h$. Let $V$ be a neighborhood of $a$ in $\mathbb R^{N-1}$ such that $\forall v\in V,\,f(v,b_1)<0<f(v,b_2)$. By the IVT, for all $v\in V$, there is $b_1\le b_v\le b_2$ such that $f(v,b_v)=0$, and so $g(v,b_v)=0$. Then $\forall v\in V,\,h(v)=0$ and hence $h\equiv 0$ which is a contradiction.
\end{proof}

\begin{eg}
Let $X=V\left(y^2-x^3\right)$. Since $y^2-x^3$ is irreducible and changes sign, we have $I(X)=\left(y^2-x^3\right)$ by Lemma \ref{lem:signcriterion}. Hence we get,
\begin{align*}
\mathcal L_1(X)&=\left\{(a_0+a_1t,b_0+b_1t)\in\left(\quotient{\mathbb R\{t\}}{t^{2}}\right)^2,\,(b_0+b_1t)^2-(a_0+a_1t)^3\equiv0\mod t^{2}\right\} \\
&=\left\{(a_0+a_1t,b_0+b_1t)\in\left(\quotient{\mathbb R\{t\}}{t^{2}}\right)^2,\,a_0^3=b_0^2,\,3a_1a_0^2=2b_0b_1\right\}
\end{align*}
\begin{align*}
\mathcal L_2(X)&=\left\{\begin{array}{l}(a_0+a_1t+a_2t^2,b_0+b_1t+b_2t^2)\in\left(\quotient{\mathbb R\{t\}}{t^{3}}\right)^2,\\\quad\quad\quad\quad\quad\quad\quad(b_0+b_1t+b_2t^2)^2-(a_0+a_1t+a_2t^2)^3\equiv0\mod t^{3}\end{array}\right\} \\
&=\left\{(a_0+a_1t+a_2t^2,b_0+b_1t+b_2t^2)\in\left(\quotient{\mathbb R\{t\}}{t^{3}}\right)^2,\,\begin{array}{l}a_0^3=b_0^2,\\ 3a_1a_0^2=2b_0b_1,\\ 3a_0^2a_2+3a_0a_1^2=2b_0b_2+b_1^2\end{array}\right\}
\end{align*}
Then the preimage of $(0,t)\in\mathcal L_1(X)$ by $\pi^2_1$ is obviously empty.
\end{eg}

We therefore take care not to confuse the set $\mathcal L_n(X)$ of $n$-jets on $X$ and the set $\pi_n\left(\mathcal L(X)\right)$ of $n$-jets on $X$ which can be lifted to analytic arcs. Thanks to Hensel's lemma and Artin approximation theorem \cite{Art68}, this phenomenon disappears in the non-singular case. 

\begin{prop}\label{prop:surj}
Let $X$ be an algebraic subset of $\mathbb R^N$. The following are equivalent:
\begin{enumerate}[label=(\roman*), ref=(\roman*), nosep]
\item\label{item:s1} For all $n$, $\pi^{n+1}_n:\mathcal L_{n+1}(X)\rightarrow\mathcal L_n(X)$ is surjective.
\item\label{item:s2} For all $n$, $\pi_n:\mathcal L(X)\rightarrow\mathcal L_n(X)$ is surjective.
\item\label{item:s3} $X$ is non-singular.
\end{enumerate}
\end{prop}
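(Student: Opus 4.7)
I plan to prove the equivalences via \ref{item:s2}~$\Rightarrow$~\ref{item:s1} (immediate), \ref{item:s1}~$\Rightarrow$~\ref{item:s2} (Artin approximation), \ref{item:s3}~$\Rightarrow$~\ref{item:s2} (Nash implicit function theorem), and \ref{item:s2}~$\Rightarrow$~\ref{item:s3} (a dimension count at a singular point).

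The implication \ref{item:s2}~$\Rightarrow$~\ref{item:s1} is immediate: if $\gamma_n\in\mathcal L_n(X)$ lifts to some $\gamma\in\mathcal L(X)$, then $\pi_{n+1}(\gamma)\in\mathcal L_{n+1}(X)$ projects to $\gamma_n$ under $\pi_n^{n+1}$. For the converse \ref{item:s1}~$\Rightarrow$~\ref{item:s2}, starting from $\gamma_n\in\mathcal L_n(X)$ I iteratively pick preimages under the $\pi_m^{m+1}$'s to get a coherent sequence $(\gamma_m)_{m\ge n}$, whose inverse limit is a formal arc $\hat\gamma\in\mathbb R[[t]]^N$ satisfying $f(\hat\gamma)=0$ for every $f\in I(X)$. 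Artin's approximation theorem \cite{Art68}, applied to the finite system $f_1=\cdots=f_k=0$ generating $I(X)$, then yields a convergent arc $\tilde\gamma\in\mathcal L(X)$ with $\tilde\gamma\equiv\hat\gamma\pmod{t^{n+1}}$, whence $\pi_n(\tilde\gamma)=\gamma_n$.

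For \ref{item:s3}~$\Rightarrow$~\ref{item:s2}, I apply the Nash implicit function theorem to obtain, at each non-singular point $x_0\in X$ of dimension $d$, a Nash diffeomorphism $\varphi:(U,0)\to(V,x_0)$ with $U\subset\mathbb R^d$ open and $\varphi(U)=X\cap V$. Composition with $\varphi$ and with its formal inverse sets up, for every $n$, a bijection $\mathcal L_n(\mathbb R^d)_0\to\mathcal L_n(X)_{x_0}$ compatible with truncation. Since any jet on $\mathbb R^d$ lifts tautologically to its polynomial representative, pushing forward by $\varphi$ produces an analytic arc on $X$ realising any prescribed jet at $x_0$.

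Finally, for \ref{item:s2}~$\Rightarrow$~\ref{item:s3} I argue by contraposition. Suppose $X$ is singular at some $x_0$ and set $d=\dim_{x_0}X$. By the Jacobian criterion, $\mathcal L_1(X)_{x_0}\simeq T^{\mathrm{Zar}}_{x_0}X$ has dimension strictly greater than $d$. On the other hand, for every $\gamma\in\mathcal L(X)_{x_0}$ the initial velocity $\gamma'(0)$ is a limit of secant directions, hence lies in the real tangent cone $C_{x_0}X$, and the latter is a semialgebraic cone of dimension at most $d$. Therefore $\pi_1(\mathcal L(X)_{x_0})\subset C_{x_0}X\subsetneq\mathcal L_1(X)_{x_0}$ and $\pi_1$ fails to be surjective, contradicting \ref{item:s2}. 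The genuine difficulty sits in this last step: one must pin down the inclusion $\pi_1(\mathcal L(X)_{x_0})\subset C_{x_0}X$ and, crucially, the bound $\dim C_{x_0}X\le d$ in the real semialgebraic setting. Once this is in place, the remaining implications are essentially formal consequences of the Nash implicit function theorem and of Artin's approximation theorem.
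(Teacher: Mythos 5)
Your proposal is correct, and the overall structure is close to the paper's, but with two noticeable differences worth spelling out. The paper proves a single cycle (iii)$\Rightarrow$(ii)$\Rightarrow$(i)$\Rightarrow$(iii): (iii)$\Rightarrow$(ii) is exactly your Hensel-plus-Artin step (your separate (i)$\Rightarrow$(ii) via the inverse-limit and Artin is the same argument in disguise, with the one-step lifting hypothesis (i) playing the role of Hensel's lemma, and your Nash-IFT route for (iii)$\Rightarrow$(ii) is thus redundant); (ii)$\Rightarrow$(i) is trivial; and the closing step is (i)$\Rightarrow$(iii) rather than your (ii)$\Rightarrow$(iii). For that closing step the paper uses the \emph{algebraic} tangent cone rather than the geometric one: at a singular point one finds $\alpha\in T^{\mathrm{Zar}}_0X=\mathcal L_1(X)_0$ with $\mathrm{in}(f)(\alpha)\neq 0$ for some $f\in I(X)$ of order $m$, and then $f(\alpha t)\not\equiv 0\bmod t^{m+1}$ shows this $1$-jet cannot even be lifted to $\mathcal L_m(X)$ — negating (i) directly after finitely many truncation steps, with no reference to actual arcs. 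Your version negates (ii) by passing through limits of secants and the dimension bound $\dim C_{x_0}X\le d$, which you rightly flag as the nontrivial input; the paper's route replaces that by the analogous but more algebraic fact that the initial-form cone has dimension $\le d$ while $\dim T^{\mathrm{Zar}}_{x_0}X>d$ at a singular point. Both are correct; the paper's is a bit more economical and stays purely at the jet level, whereas yours gives the slightly stronger conclusion that already $\pi_1$ fails to be onto.
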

\begin{proof}
\ref{item:s3}$\Rightarrow$\ref{item:s2} is obvious using Hensel's lemma and Artin approximation theorem \cite{Art68}. \\
\ref{item:s2}$\Rightarrow$\ref{item:s1} is obvious since $\pi_n=\pi^{n+1}_n\circ\pi_{n+1}$. \\
\ref{item:s1}$\Rightarrow$\ref{item:s3}: Assume that $0$ is a singular point of $X$. We can find $\gamma=\alpha t\in\mathcal L_1(X)$ which doesn't lie in the tangent cone of $X$ at $0$, i.e. such that $f(\alpha t)\nequiv 0\mod t^{m+1}$ for some $f\in I(X)$ of order $m$. Such a $1$-jet can't be lifted to $\mathcal L_m(X)$. 
\end{proof}

The set $\mathcal L_n(X)$ of $n$-jets on $X\subset\mathbb R^N$ can be seen as a algebraic subset of $\mathbb R^{(n+1)N}$. By a theorem of M. J. Greenberg \cite{Gre66}, given an algebraic subset $X\subset\mathbb R^N$, there exists $c\in\mathbb N_{>0}$ such that for all $n\in\mathbb N$, $\pi_n(\mathcal L(X))=\pi^{cn}_n(\mathcal L_{cn}(X))$. Then if we work over $\mathbb C$ the sets $\pi_n(\mathcal L(X))$ are Zariski-constructible by Chevalley theorem. See for instance \cite{L-J90}\footnote{She uses a generalization of \cite[Theorem 6.1]{Art69} instead of Greenberg theorem.}, \cite{GSLJ96} or \cite{DL99}. \\
In our framework, the following example shows that the $\pi_n(\mathcal L(X))$ may not even be $\mathcal{AS}$.

\begin{eg}
Let $X=V\left(x^2-zy^2\right)$. Then for every $a\in\mathbb R$, $\gamma_a(t)=(0,t^2,at^2)\in\mathcal L_2(X)$. Let $\eta(t)=(bt^3+t^4\eta_1(t),t^2+t^3\eta_2(t),at^2+t^3\eta_3(t))\in\mathcal L(\mathbb R^3)$. Let $f(x,y,z)=x^2-zy^2$, then $f(\eta(t))=(b^2-a)t^6+t^7\tilde\eta(t)$. So if $a<0$, $\gamma_a(t)\notin\pi_2(\mathcal L(X))$. However if $a\ge0$, $\gamma_a(t)=\pi_2\left(\sqrt at^3,t^2,at^2\right)\in\pi_2(\mathcal L(X))$.
\end{eg}

\begin{prop}\label{prop:jetsdim}
Let $X\subset\mathbb R^N$ be an algebraic subset of dimension $d$. Then:
\begin{enumerate}[label=(\roman*), nosep]
\item\label{item:dimliftable} $\dim\left(\pi_n(\mathcal L(X))\right)=(n+1)d$
\item\label{item:dimjets} $\dim\left(\mathcal L_n(X)\right)\ge (n+1)d$
\item\label{item:dimfiberliftable} The fibers of $\tilde\pi^{m}_n={\pi^{m}_n}_{|\pi_{m}(\mathcal L(X))}:\pi_m\left(\mathcal L(X)\right)\rightarrow\pi_n\left(\mathcal L(X)\right)$ are of dimension smaller than or equal to $(m-n)d$ where $m\ge n$.
\item\label{item:dimfiber} A fiber $\left(\pi^{n+1}_n\right)^{-1}(\gamma)$ of $\pi^{n+1}_n:\mathcal L_{n+1}(X)\rightarrow\mathcal L_n(X)$ is either empty or isomorphic to $T^{\mathrm{Zar}}_{\gamma(0)}X$.
\end{enumerate}
If moreover we assume that $X$ is non-singular, we get the following statement since $\mathcal L_n(X)=\pi_n\left(\mathcal L(X)\right)$:
\begin{enumerate}[resume*]
\item $\dim\left(\mathcal L_n(X)\right)=(n+1)d$
\end{enumerate}
\end{prop}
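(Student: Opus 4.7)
\emph{Plan.} I will handle the five items roughly in the order (iv), (i), (ii), (v), (iii); the later items rely on a resolution of singularities combined with the earlier ones, and I will argue (i) and (iii) simultaneously by induction on $d = \dim X$.

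\emph{Step 1: (iv) by direct Taylor expansion.} Any preimage of $\gamma \in \mathcal L_n(X)$ by $\pi^{n+1}_n$ is represented by $\tilde\gamma(t) = \gamma(t) + a t^{n+1}$ with $a \in \mathbb R^N$. For $f \in I(X)$, the terms quadratic or higher in the perturbation $a t^{n+1}$ lie in degree $\ge 2(n+1) \ge n+2$, so
$$f(\tilde\gamma(t)) \equiv f(\gamma(t)) + t^{n+1}\sum_i \partial_i f(\gamma(0))\,a_i \pmod{t^{n+2}}.$$
Writing $f(\gamma(t)) \equiv c_f\,t^{n+1}\pmod{t^{n+2}}$, the condition $\tilde\gamma \in \mathcal L_{n+1}(X)$ becomes the affine system $\sum_i \partial_i f(\gamma(0))\,a_i = -c_f$ for every $f \in I(X)$, whose associated homogeneous system cuts out exactly $T^{\mathrm{Zar}}_{\gamma(0)}X$. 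Hence the fiber is either empty or an affine translate of, and in particular isomorphic to, $T^{\mathrm{Zar}}_{\gamma(0)}X$.

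\emph{Step 2: (i), and then (ii) and (v).} I prove (i) by induction on $d$, the base case $d=0$ being trivial (constant arcs). For the step, choose a resolution $\sigma : M \to X$ by blowings-up with non-singular centers; the remark preceding this subsection ensures that every arc of $X$ not entirely in $\Sing X$ lifts uniquely to $M$. Decompose
$$\pi_n(\mathcal L(X)) = \pi_n(\mathcal L(\Sing X)) \cup \pi_n(\mathcal L(X)\setminus\mathcal L(\Sing X));$$
the first piece has dimension $\le (n+1)\dim\Sing X < (n+1)d$ by the inductive hypothesis applied to $\Sing X$, while the second is contained in $\sigma_*(\pi_n(\mathcal L(M)))$, of dimension $\le \dim \mathcal L_n(M) = (n+1)d$ by the Nash implicit function theorem on the non-singular $M$. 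For the matching lower bound, any $x \in \Reg_d X$ admits a neighborhood in $X$ that is a Nash $d$-manifold, so $\pi_n(\mathcal L(X))$ already contains a $(n+1)d$-dimensional piece. The inclusion $\pi_n(\mathcal L(X)) \subset \mathcal L_n(X)$ then gives (ii), and Proposition~\ref{prop:surj} together with (i) gives (v).

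\emph{Step 3: (iii), still by induction.} Fix $\gamma \in \pi_n(\mathcal L(X))$. If every arc of $X$ with $n$-truncation $\gamma$ lies in $\Sing X$, then the fiber sits in $\pi_m(\mathcal L(\Sing X))$ and has dimension $\le (m-n)\dim\Sing X < (m-n)d$ by the inductive hypothesis. Otherwise, the arc-lifting bijection along $\sigma$ presents the fiber as the image under $\sigma_*$ of a subset $F \subset \pi_m(\mathcal L(M)\setminus\mathcal L(\sigma^{-1}\Sing X))$ whose $\pi^m_n$-image is the set $E$ of $n$-jets $\gamma_M \in \mathcal L_n(M)$ with $\sigma_*(\gamma_M) = \gamma$ extending to arcs of $M$ avoiding $\sigma^{-1}\Sing X$; on the non-singular $M$ each fiber of $\pi^m_n$ is an affine space of dimension exactly $(m-n)d$, so $\dim F \le \dim E + (m-n)d$, and the required bound will follow once $\dim E = 0$. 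The main obstacle is precisely this zero-dimensionality: over singular points the naive fiber of $\sigma_*$ on $\mathcal L_n(M)$ can be positive-dimensional (as in the Whitney-umbrella example, where $\sigma^{-1}$ of a $1$-jet at a singular point is a whole line in a chart), so the argument must use that we are restricting to liftable $n$-jets extending to arcs not in the exceptional set, invoking the uniqueness of arc-lifting under each blowing-up composing $\sigma$ and pulling the conclusion down from arcs to jets via the noetherianity of $\mathcal{AS}$.
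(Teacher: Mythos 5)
Your Step 1 (item (iv)) is correct and coincides with the paper's Taylor-expansion computation. Your Step 2 establishes (i), (ii), (v) by a different route than the paper: the paper gets the upper bound in (i) as an immediate corollary of the fiber bound (iii), whereas you use a resolution of singularities together with induction on $\dim X$; your version is sound (the upper bound $\dim\sigma_{*n}(\mathcal L_n(M))\le(n+1)d$ and the lower bound from the Nash-manifold locus both check out) but pays the cost of Hironaka resolution and an induction where the paper's route through (iii) needs nothing extra.

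Step 3 is where the argument breaks, and the gap is not the one you flag. You reduce to the bound $\dim F\le\dim E+(m-n)d$ and then hope that $\dim E=0$; but this is not merely unproven, it is false, for a structural reason: the arc-lifting bijection does not descend to $n$-truncations. Two distinct arcs on $X$ sharing the $n$-jet $\gamma$ each lift uniquely to $M$, but nothing forces those lifts to share their own $n$-truncation. Concretely, take $X=\mathbb R^d$ with $d\ge2$, $\sigma$ the blowing-up of the origin, $n=1$, $m=2$ and $\gamma(t)=(t,0,\dots,0)$. In the chart of $M$ where $\sigma(s,y_2,\dots,y_d)=(s,sy_2,\dots,sy_d)$, the set $F$ of $2$-jets of $M$ mapping into $\left(\tilde\pi^2_1\right)^{-1}(\gamma)$ is $\left\{\left(s+\alpha_1s^2,\ u_2s+\alpha_2s^2,\dots,\ u_ds+\alpha_ds^2\right)\right\}$, of dimension $2d-1$, its $\pi^2_1$-image $E$ has dimension $d-1$, while the target fiber is $\left\{\left(t+at^2,\ b_2t^2,\dots,\ b_dt^2\right)\right\}$ of dimension $d$. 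Your inequality $\dim F\le\dim E+(m-n)d$ is consistent, but $\dim F>(m-n)d$, and the required collapse from $F$ down to the $d$-dimensional fiber happens inside $\sigma_{*m}$, which your accounting never bounds. (Here $X$ is non-singular, so one could dodge the example by taking $\sigma=\mathrm{id}$, but the same mechanism occurs for any singular $X$ whenever $\gamma(0)$ lies on a center of a blowing-up composing $\sigma$.) Controlling that collapse is exactly the content of Lemma \ref{lem:CoV}, which applies only once $n$ exceeds bounds depending on the order of the Jacobian of $\sigma$ along the lifted arc, so it gives nothing for fixed small $n$; moreover Lemma \ref{lem:CoV} appears later and its use in Lemma \ref{lem:Decomposition} relies on item (i) of Proposition \ref{prop:jetsdim}, so invoking it here would be circular. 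The paper proves (iii) without any resolution: reducing to $m=n+1$ and to a polynomial representative $\gamma\in(\mathbb R_n[t])^N$, it sets $p_1(x,t)=\gamma(t)+t^{n+1}x$, forms $\mathfrak X=\clos[Zar]{p_1^{-1}(X)\cap\{t\neq0\}}$, notes that the fiber of $p_2(x,t)=t$ over any $c\neq0$ is a copy of $X$ (hence $\dim\mathfrak X=d+1$), and embeds $\left(\tilde\pi^{n+1}_n\right)^{-1}(\gamma)$ into the special fiber $\mathfrak X\cap\{t=0\}$, which has dimension at most $d$. That elementary deformation-to-$t=0$ argument is the missing ingredient.
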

\begin{proof}
We first notice that \ref{item:dimliftable} is a direct consequence of \ref{item:dimfiberliftable}.

\ref{item:dimjets} $(\pi^n_0)^{-1}(X\setminus X_\sing)$ is of dimension $(n+1)d$ since the fiber of $\pi^n_0$ over a non-singular point is of dimension $nd$.

\ref{item:dimfiberliftable} We may assume that $m=n+1$. Let $\gamma\in\pi_n(\mathcal L(X))$. We may assume that $\gamma\in(\mathbb R_n[t])^N$.
We consider the following diagram
$$\xymatrix{
&\mathbb R^N\times\mathbb R \ar[dl]_{p_1} \ar[dr]^{p_2}& \\
\mathbb R^N & & \mathbb R 
}$$
with $p_1(x,t)=\gamma(t)+t^{n+1}x$ and $p_2(x,t)=t$. 
Let $\mathfrak{X}=\clos[Zar]{p_1^{-1}(X)\cap\{t\neq0\}}$. For $c\neq0$, $\mathfrak X\cap p_2^{-1}(c)\simeq X$ and $\dim\mathfrak X\cap p_2^{-1}(c)=\dim\mathfrak X-1$. Hence $\dim\mathfrak X\cap p_2^{-1}(0)\le\dim\mathfrak X-1=\dim X$. \\
We are looking for objects of the form $\pi_{n+1}(\gamma(t)+t^{n+1}\alpha(t))$ with $\gamma(t)+t^{n+1}\alpha(t)\in\mathcal L(X)$. Such an $\alpha$ is equivalent to a section of ${p_2}_{|\mathfrak X}$ i.e. $\begin{array}{rcl}\mathbb R & \rightarrow & \mathfrak X \\ t&\mapsto&(\alpha(t),t)\end{array}$. Since we want an arc modulo $t^{n+2}$, we are looking for the constant term of $\alpha$, therefore $(\tilde\pi^{n+1}_n)^{-1}(\gamma)\subset\mathfrak X\cap p_2^{-1}(0)$.

\ref{item:dimfiber} Let $\gamma\in\mathcal L_n(X)$. Let $\eta\in\mathbb R^N$. Assume that $I(X)=(f_1,\ldots,f_r)$. By Taylor expansion we get $$f_i(\gamma+t^{n+1}\eta)\equiv f_i(\gamma(t))+t^{n+1}\left(\nabla_{\gamma(t)}f_i\right)(\eta) \mod t^{n+2}$$
Assume that $f_i(\gamma(t))\equiv t^{n+1}\alpha_i\mod t^{n+2}$. Since $t^{n+1}\left(\nabla_{\gamma(t)}f_i\right)(\eta)\equiv t^{n+1}\left(\nabla_{\gamma(0)}f_i\right)(\eta)\mod t^{n+2}$, we have $$f_i(\gamma+t^{n+1}\eta)\equiv t^{n+1}\left(\alpha_i+\left(\nabla_{\gamma(0)}f_i\right)(\eta)\right) \mod t^{n+2}$$
Hence, $\gamma(t)+t^{n+1}\eta$ is in the fiber $\left(\pi^{n+1}_n\right)^{-1}(\gamma)$ if and only if $\alpha_i+\left(\nabla_{\gamma(0)}f_i\right)(\eta)=0,\,i=1,\ldots,r$.
\end{proof}

An arc-analytic map $f:X\rightarrow Y$ induces a map $f_*:\mathcal L(X)\rightarrow\mathcal L(Y)$. Moreover, if $f:X\rightarrow Y$ is analytic, then we also have maps at the level of $n$-jets $f_{*n}:\mathcal L_n(X)\rightarrow\mathcal L_n(Y)$ such that the following diagram commutes $$\xymatrix{\mathcal L(X) \ar[r]^{f_*} \ar[d]_{\pi_n} & \mathcal L(Y) \ar[d]^{\pi_n} \\ \mathcal L_n(X) \ar[r]_{f_{*n}} & \mathcal L_n(Y)}$$
In particular, if $X$ is non-singular, $\Im f_{*n}\subset\pi_n\left(\mathcal L(Y)\right)$ since $\pi_n:\mathcal L(X)\rightarrow\mathcal L_n(X)$ is surjective. \\

For $M$ a non-singular algebraic set and $\sigma:M\rightarrow X\subset\mathbb R^N$ analytic, we define $\Jac_\sigma(x)$ the Jacobian matrix of $\sigma$ at $x$ with respect to a coordinate system at $x$ in $M$. For $\gamma$ an arc on $M$ with origin $\gamma(0)=x$, we define the order of vanishing of $\gamma$ along $\Jac_\sigma$ by $\ord_t\Jac_\sigma(\gamma(t))=\min\{\ord_t\delta(\gamma(t)),\,\forall\delta\text{ $m$-minor of $\Jac_\sigma$}\}$ where $m=\min(d,N)$ and $\gamma$ is expressed in the local coordinate system. This order of vanishing is independent of the choice of the coordinate system.

The critical locus of $\sigma$ is $C_\sigma=\{x\in M,\,\delta(x)=0,\,\forall\delta\text{ $m$-minor of $\Jac_\sigma$}\}$. If $E\subset M$ is locally described by an equation $f=0$ around $x$ and if $\gamma$ is an arc with origin $\gamma(0)=x$ then $\ord_{\gamma}E=\ord_tf(\gamma(t))$. \\

\section{The main theorem}
\begin{lemma}\label{lem:hyp}
Let $X$ be an algebraic subset of $\mathbb R^N$ and $f:X\rightarrow X$ a blow-Nash map. Let $\sigma:M\rightarrow X$ be a sequence of blowings-up with non-singular centers such that $\tilde\sigma=f\circ\sigma:M\rightarrow X$ is Nash.
$$\xymatrix{&M \ar[ld]_\sigma \ar[rd]^{\tilde\sigma}&\\X \ar[rr]_f&&X}$$
After adding more blowings-up, we may assume that the critical loci of $\sigma$ and $\tilde\sigma$ are simultaneously normal crossing and denote them by $\sum_{i\in I}\nu_iE_i$ and $\sum_{i\in I}\tilde\nu_iE_i$. \\
Then the property \begin{equation}\forall i\in I,\,\nu_i\ge\tilde\nu_i\label{eq:hyp}\end{equation} doesn't depend on the choice of $\sigma$. 
\end{lemma}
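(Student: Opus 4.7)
The plan is to compare two valid choices of $\sigma$ by passing to a common refinement, then to reduce via the chain rule to a combinatorial statement about effective divisors on a normal crossing arrangement. First, given two candidates $\sigma_1 : M_1 \to X$ and $\sigma_2 : M_2 \to X$ as in the statement, with associated $\tilde\sigma_j = f \circ \sigma_j$, I would construct a third candidate $\sigma_3 : M_3 \to X$ factoring as $\sigma_3 = \sigma_1 \circ \pi_1 = \sigma_2 \circ \pi_2$, where $\pi_1, \pi_2$ are sequences of blowings-up with non-singular centers. Such a common refinement is obtained by applying Hironaka's resolution of indeterminacy to the birational map $M_1 \dashrightarrow M_2$ induced by $\sigma_1$ and $\sigma_2$; after additional blowings-up one may assume that the critical loci of $\sigma_3$, $\tilde\sigma_3$, $\pi_1$ and $\pi_2$ are simultaneously normal crossing. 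By symmetry it then suffices to prove invariance when $\sigma$ is replaced by $\sigma' = \sigma \circ \pi$ for a single sequence of blowings-up with non-singular centers $\pi : M' \to M$.

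For this reduction, the chain rule for Jacobian determinants gives, as divisors on $M'$,
\[
\textstyle \sum_j \nu'_j E'_j \;=\; \pi^*\bigl(\sum_i \nu_i E_i\bigr) + R_\pi, \qquad \sum_j \tilde\nu'_j E'_j \;=\; \pi^*\bigl(\sum_i \tilde\nu_i E_i\bigr) + R_\pi,
\]
where $R_\pi$ denotes the critical divisor of $\pi$ itself, which appears in both expressions because $\tilde\sigma' = \tilde\sigma \circ \pi$. Subtracting, $\sum_j (\nu'_j - \tilde\nu'_j) E'_j = \pi^*\bigl(\sum_i (\nu_i - \tilde\nu_i) E_i\bigr)$. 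The property $\nu_i \ge \tilde\nu_i$ for all $i$ amounts to the effectiveness of $D := \sum_i (\nu_i - \tilde\nu_i) E_i$ on $M$, so my remaining task is: $D$ is effective on $M$ if and only if $\pi^* D$ is effective on $M'$.

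For a single blowup of a non-singular center $Z \subset M$ with exceptional divisor $F$, one has $\pi^* E_i = \widetilde E_i + \varepsilon_i F$ with $\varepsilon_i \in \{0,1\}$ equal to $1$ exactly when $Z \subset E_i$; hence the coefficient of each strict transform $\widetilde E_i$ in $\pi^* D$ coincides with the coefficient of $E_i$ in $D$, while the coefficient of $F$ is an integer sum of coefficients of $D$. Effectiveness therefore transfers in both directions, and iteration handles an arbitrary sequence of blowings-up. The main obstacle is Step 1: producing a common refinement whose projections are themselves sequences of blowings-up with non-singular centers, compatibly with the normal crossing assumption. This rests on Hironaka-type resolution results (resolution of indeterminacy, together with embedded resolution that preserves the normal crossing structure) rather than on any new idea; once it is in hand the chain-rule/effectiveness argument above is formal.
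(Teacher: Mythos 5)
Your proposal takes essentially the same route as the paper: construct a common roof over $M_1$ and $M_2$, then transport the comparison of Jacobian ideals along it via the chain rule. The paper obtains the roof using Hironaka's flattening theorem, while you invoke resolution of indeterminacy of the birational map $M_1\dashrightarrow M_2$; both are standard and both work. Your divisor-level effectiveness argument is actually more careful than the paper's two-line version. The paper says ``by the chain rule, the relations at the level $M_i$ are preserved in $\widetilde M$'' and then asserts the relations in $M_1$ and $M_2$ must coincide; the chain rule directly gives only that the inclusion of Jacobian ideals passes \emph{up} to $\widetilde M$, while the descent (inclusion on $\widetilde M$ implies inclusion on $M_i$) is precisely what your ``$\pi^*D$ effective $\Rightarrow D$ effective'' step supplies, by inspecting coefficients on strict transforms.

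One point to repair in your reduction: after resolving the indeterminacy of $M_1\dashrightarrow M_2$ you get $\pi_1:M_3\to M_1$ as a sequence of smooth-centered blowings-up, but $\pi_2:M_3\to M_2$ is only a proper birational morphism between smooth varieties; it need not itself be a sequence of blowings-up (that would be strong factorization, which is not available). So the clean ``reduce to a single sequence of blowings-up'' statement should be replaced by the observation — already implicit in your argument — that the chain-rule identity $\sum_j(\nu'_j-\tilde\nu'_j)E'_j=\pi^*D$ and the equivalence ``$D$ effective $\Leftrightarrow\pi^*D$ effective'' hold for any proper birational morphism $\pi$ between smooth varieties (the forward direction because the pullback of an ideal sheaf is an ideal sheaf, the backward direction because $\pi$ is an isomorphism over the generic point of each component of $D$). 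With that rephrasing your proof is complete.
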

\begin{proof}
Given $\sigma_1$ and $\sigma_2$ as in the statement and using Hironaka flattening theorem lemma \cite{Hir75} (which works as it is in the real algebraic case), there exist $\pi_1$ and $\pi_2$ regular such that the following diagram commutes:
$$\xymatrix{
&\widetilde{M} \ar@{-->}[ld]_{\pi_1} \ar@{-->}[rd]^{\pi_2} & & \\
M_1 \ar@/_/[ddr]_{\tilde\sigma_1} \ar[dr]^{\sigma_1} & & M_2 \ar@/^/[ddl]^{\tilde\sigma_2} \ar[dl]_{\sigma_2} \\
& X \ar[d]_f & \\
& X &
}$$
The relation \ref{eq:hyp} means exactly that the Jacobian ideal of $\sigma_i$ is included in the Jacobian ideal of $\tilde\sigma_i$. By the chain rule, the relations at the level $M_i$ are preserved in $\widetilde M$. Again by the chain rule and since the previous diagram commutes, the relations in $M_1$ and $M_2$ must coincide.
\end{proof}

\begin{defn}
We say that a map $f:X\rightarrow X$ as in Lemma \ref{lem:hyp} verifying the relation \eqref{eq:hyp} satisfies the \emph{Jacobian hypothesis}.
\end{defn}

\begin{qu}
May we find a geometric interpretation of this hypothesis?
\end{qu}

The following example is a direct consequence of the chain rule.
\begin{eg}\label{eg:jachyp}
Let $X$ be a non-singular algebraic set and $f:X\rightarrow X$ a regular map satisfying $|\det\d f|>c$ for a constant $c>0$, then $f$ satisfies the \emph{Jacobian hypothesis}.
\end{eg}

\begin{thm}[Main theorem]\label{thm:Main}
Let $X$ be an algebraic subset of $\mathbb R^N$ and $f:X\rightarrow X$ a semialgebraic homeomorphism (for the Euclidean topology). If $f$ is blow-Nash and satisfies the Jacobian hypothesis then $f^{-1}$ is blow-Nash and satisfies the Jacobian hypothesis too.
\end{thm}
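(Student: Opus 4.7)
The plan is to deduce that $f^{-1}$ is generically arc-analytic in dimension $d=\dim X$ (hence blow-Nash by Proposition~\ref{prop:blowNash}) and to extract the Jacobian hypothesis for $f^{-1}$ from the same motivic volume computation. First, fix a resolution $\sigma:M\to X$ by blowings-up with non-singular centres such that $\tilde\sigma=f\circ\sigma$ is Nash, and by Lemma~\ref{lem:hyp} refine until the critical loci of $\sigma$ and $\tilde\sigma$ are simultaneous normal crossings with divisors $\sum_{i\in I}\nu_iE_i$ and $\sum_{i\in I}\tilde\nu_iE_i$, so that the Jacobian hypothesis reads $\nu_i\geq\tilde\nu_i$. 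Because $f$ is a homeomorphism and $\sigma$ is birational, $\tilde\sigma$ is proper and generically one-to-one.

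Next, set up motivic integration on $\mathcal L(X)$ and $\mathcal L(M)$ with the virtual Poincaré polynomial as additive invariant. The birational change-of-variables applied to $\sigma$ gives
\[\mu(\mathcal L(X))=\int_{\mathcal L(M)}u^{-d\cdot\ord_\gamma\Jac_\sigma}\,\d\mu,\]
and the generalised change-of-variables (Lemma~\ref{lem:CoV}), whose novelty is to replace birationality by generic one-to-oneness for proper Nash maps, gives
\[\mu(\tilde\sigma_{*}\mathcal L(M))=\int_{\mathcal L(M)}u^{-d\cdot\ord_\gamma\Jac_{\tilde\sigma}}\,\d\mu.\]
The hypothesis $\nu_i\geq\tilde\nu_i$ yields $\ord_\gamma\Jac_\sigma\geq\ord_\gamma\Jac_{\tilde\sigma}$, whence $\mu(\tilde\sigma_{*}\mathcal L(M))\geq\mu(\mathcal L(X))$; combined with the trivial inclusion $\tilde\sigma_{*}\mathcal L(M)\subseteq\mathcal L(X)$ this forces equality, which, since the leading coefficient of the virtual Poincaré polynomial is strictly positive, propagates to the pointwise identity $\nu_i=\tilde\nu_i$ for every $i$.

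The equality of measures implies that $\mathcal L(X)\setminus\tilde\sigma_{*}\mathcal L(M)$ sits above a strict algebraic subset $S\subsetneq X$ (using noetherianity of the $\mathcal{AS}$-topology and curve selection). For every arc $\gamma\in\mathcal L(X)$ not entirely contained in $S$ there is a lift $\tilde\gamma\in\mathcal L(M)$ with $\tilde\sigma(\tilde\gamma)=\gamma$, unique because $\tilde\sigma$ is generically one-to-one and because arcs not contained in the centre of a blowing-up lift uniquely through each step of $\sigma$. Then $f^{-1}(\gamma)=\sigma(\tilde\gamma)\in\mathcal L(X)$, so $f^{-1}$ is generically arc-analytic in dimension $d$; being continuous and semialgebraic, Proposition~\ref{prop:blowNash} delivers that $f^{-1}$ is blow-Nash. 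For the Jacobian hypothesis of $f^{-1}$, fix any resolution $\sigma':M'\to X$ with $\tilde\sigma'=f^{-1}\circ\sigma'$ Nash and run the dual comparison: $f$ itself being blow-Nash and hence generically arc-analytic, each generic arc $\gamma$ has $f(\gamma)$ analytic, which lifts through the birational $\sigma'$ to some $\bar\gamma'\in\mathcal L(M')$ with $\tilde\sigma'(\bar\gamma')=\gamma$; so $\tilde\sigma'_{*}\mathcal L(M')$ essentially equals $\mathcal L(X)$, and a refined version of the comparison applied to cylinders of arcs with prescribed contact along the exceptional divisors of $\sigma'$ and $\tilde\sigma'$ forces $\nu'_j=\tilde\nu'_j$, yielding in particular the required $\nu'_j\geq\tilde\nu'_j$.

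The hardest part is Lemma~\ref{lem:CoV}: the classical Denef--Loeser change-of-variables relies on birationality, whereas $\tilde\sigma$ is only generically one-to-one, and the target $X$ is singular, so the discrepancy between $\mathcal L_n(X)$ and $\pi_n(\mathcal L(X))$ — which, as the example $V(x^2-zy^2)$ in \S\ref{sect:arcsjets} shows, can fail to be an $\mathcal{AS}$-set — has to be controlled throughout. The replacement for Chevalley's theorem provided by Theorem~\ref{thm:oddfibers} on images under regular maps whose fibres have odd Euler characteristic, together with the noetherianity of the $\mathcal{AS}$-topology, is precisely what keeps constructibility under generically one-to-one maps and makes the whole motivic machinery work in this singular, real-algebraic setting.
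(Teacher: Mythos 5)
The overall architecture of your proof matches the paper: generalize the Denef--Loeser change-of-variables lemma to the proper, generically one-to-one, singular-target situation; compare ``motivic volumes'' of $\mathcal L(X)$ and $\tilde\sigma_*\mathcal L(M)$; deduce $\nu_i=\tilde\nu_i$ from the Jacobian hypothesis $\nu_i\geq\tilde\nu_i$; use this to show generic arcs lift through $\tilde\sigma$; and conclude that $f^{-1}$ is generically arc-analytic, hence blow-Nash by Proposition~\ref{prop:blowNash}. You have also correctly identified the crux (Lemma~\ref{lem:CoV} and Theorem~\ref{thm:oddfibers}). However, the middle of your argument has genuine gaps.

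First, you invoke a limit motivic measure $\mu$ and write integrals such as $\mu(\mathcal L(X))=\int_{\mathcal L(M)}u^{-d\cdot\ord_\gamma\Jac_\sigma}\,\d\mu$ (incidentally, the correct exponent is $-\ord_\gamma\Jac_\sigma$, without the factor of $d$). No such measure is constructed in this paper, and doing so in the $\mathcal{AS}$/virtual-Poincar\'e-polynomial framework would require a completion of $\mathbb Z[u,u^{-1}]$ and convergence statements that are not available off the shelf. The paper deliberately avoids this: it works at finite jet level $n$, decomposes $\pi_n(\mathcal L(X))$ and $\operatorname{Im}\tilde\sigma_{*n}$ into the cylinders $X_{\mathbf j,n}$ plus remainder terms $Z_n$, applies $\beta$ to genuinely disjoint finite unions (after passing to $\mathcal{AS}$-closures, cf.\ Notation~\ref{notation:partitions}), and then compares the degrees of the resulting polynomials, using that $\deg\beta=\dim$ and that leading coefficients are positive. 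Your inference that ``inclusion of sets plus the inequality on Jacobian orders forces equality of measures'' is not a valid manipulation of virtual Poincar\'e polynomials as written; the paper replaces it with a contradiction argument on the degree of the term $Q_n$ in Lemma~\ref{lem:nuinuitilde}.

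Second, you identify $\operatorname{Im}\sigma_*$ with $\mathcal L(X)$, but when $X$ is singular the arcs lying entirely in $X_\sing$ do not lift through $\sigma$; this is exactly why the paper introduces the ideal $H$, the sets $\mathcal L^{(e)}(X)$, and the remainder term $T_n=\beta\bigl(\clos[\mathcal{AS}]{Z_n(\sigma)\sqcup(\pi_n(\mathcal L(X))\setminus\operatorname{Im}\sigma_{*n})}\bigr)$, whose degree is controlled by Proposition~\ref{prop:jetsdim}. Third, the passage from ``equality of motivic volumes'' to ``every arc outside a strict subset lifts through $\tilde\sigma$'' is asserted but not proved; in the paper this is Corollary~\ref{cor:lift}, and its proof is a genuinely new argument combining Proposition~\ref{prop:Puiseux} (Puiseux lift through the proper map $\tilde\sigma$) with the local H\"older continuity of $\tilde\sigma^{-1}$ (Remark~\ref{rem:Holder}) to show that a single unliftable arc would produce an $(n-N)d$-dimensional set of unliftable $n$-jets, contradicting the degree bounds from the decomposition. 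Without that step, the conclusion that $f^{-1}$ is generically arc-analytic does not follow from $\nu_i=\tilde\nu_i$ alone.
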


By Lemma \ref{lem:genAA} and Proposition \ref{prop:blowNash}, if $X$ is a non-singular algebraic subset we get the following corollary.

\begin{cor}[\cite{FKP10}]
Let $X$ be a non-singular algebraic subset and $f:X\rightarrow X$ a semialgebraic homeomorphism (for the Euclidean topology). If $f$ is arc-analytic and if there exists $c>0$ satisfying $|\det\d f|>c$ then $f^{-1}$ is arc-analytic and there exists $\tilde c>0$ satisfying $|\det\d f^{-1}|>\tilde c$.
\end{cor}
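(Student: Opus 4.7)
The plan is to exploit the commutative diagram
\[
\xymatrix{&M \ar[ld]_\sigma \ar[rd]^{\tilde\sigma}&\\X \ar[rr]_f&&X}
\]
where $\sigma$ is a composition of blowings-up with non-singular centers, $\tilde\sigma = f\circ\sigma$ is Nash, and (after adding more blowings-up) the critical divisors $\sum \nu_i E_i$ of $\sigma$ and $\sum \tilde\nu_i E_i$ of $\tilde\sigma$ have common normal crossings support. Since $f$ is a homeomorphism, $\tilde\sigma$ is proper and generically one-to-one. The Jacobian hypothesis gives $\nu_i \ge \tilde\nu_i$ for every $i \in I$.

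First I would develop a motivic-style calculus on the arc space $\mathcal{L}(X)$ using the virtual Poincaré polynomial $\beta$ of McCrory--Parusiński--Fichou as additive invariant, assigning to a constructible subset $A \subset \pi_n(\mathcal L(X))$ the weight $\beta(A)u^{-(n+1)d}$ (with $d = \dim X$). In particular, I would compute $\beta(\pi_n(\mathcal L(X)))$ and, via a fibration argument over the components of the normal crossings configuration, $\beta(\pi_n(\tilde\sigma_*\mathcal L(M)))$, stratifying by the multi-order of vanishing $(\ord_\gamma E_i)_i$ of lifts along the exceptional divisors and using Proposition \ref{prop:jetsdim} to control fiber dimensions. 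The bookkeeping is formally identical to the Denef--Loeser change of variables, but applied to both $\sigma$ and $\tilde\sigma$ at once.

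The core technical step, and the main obstacle, is a generalization of the Denef--Loeser key lemma from birational morphisms to proper Nash morphisms that are only generically one-to-one; this is the ``Lemma \ref{lem:CoV}'' alluded to in subsection \ref{sect:AS}. The point is that over $\mathbb C$ one uses Zariski constructibility from Chevalley, whereas here we must remain inside the $\mathcal{AS}$ category. Using Theorem \ref{thm:oddfibers} combined with the noetherianity of the $\mathcal{AS}$ topology, I would show that for $\tilde\sigma$ proper, Nash and generically one-to-one, the sets $\tilde\sigma_{*n}(\mathcal L_n(M))$ are $\mathcal{AS}$-constructible and that for $n$ large enough, the fibers of $\tilde\sigma_{*n}$ over a stratum where $\ord_\gamma \mathrm{Jac}_{\tilde\sigma} = k$ have virtual Poincaré polynomial equal to $u^k$ plus lower degree terms. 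This is where the one-to-one assumption replaces birationality: generic injectivity guarantees that over a dense open set a single lift exists, so the Poincaré polynomial computation still goes through.

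Combining these two computations and using $\nu_i \ge \tilde\nu_i$, the motivic volume of $\tilde\sigma_*\mathcal L(M) \subset \mathcal L(X)$ comes out equal to the full motivic volume of $\mathcal L(X)$, modulo a set concentrated on a subvariety $S \subset X$ of dimension $<d$; moreover on $\mathcal L(X) \setminus \pi^{-1}(S)$ the map $\tilde\sigma_*$ is (generically) a bijection. Consequently, an analytic arc $\gamma \subset X$ not entirely contained in $S$ admits a unique analytic lift $\tilde\gamma$ on $M$ with $\tilde\sigma\circ\tilde\gamma = \gamma$; here I would use the Puiseux-type lifting of Proposition \ref{prop:Puiseux} to produce an analytic lift after a reparametrization $t \mapsto t^m$, and then invoke uniqueness of the lift and the Hölder continuity of Remark \ref{rem:Holder} to rule out ramification, forcing $m=1$. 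Applying $\sigma$ to $\tilde\gamma$ yields $f^{-1}\circ\gamma = \sigma\circ\tilde\gamma$, which is analytic. Thus $f^{-1}$ is generically arc-analytic in dimension $d$; being also a semialgebraic homeomorphism it is continuous on $\overline{\Reg_d X}$, so Proposition \ref{prop:blowNash} gives that $f^{-1}$ is blow-Nash.

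Finally, for the Jacobian hypothesis on $f^{-1}$, I would run the same motivic computation with the roles of $f$ and $f^{-1}$ swapped: pick a common resolution on which both $f$ and $f^{-1}$ become Nash, and compare the critical divisors of $\sigma' : M' \to X$ and of $f^{-1}\circ\sigma' : M' \to X$. If any $\tilde\nu_i$ exceeded the corresponding $\nu_i$ for the opposite diagram, the volume inequality established above would become strict, contradicting the equality of motivic volumes forced by the bijection of arc spaces. Hence $f^{-1}$ also satisfies the Jacobian hypothesis, completing the proof.
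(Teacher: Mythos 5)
Your proposal essentially re-derives the paper's Theorem~\ref{thm:Main} from scratch, using the same motivic-integration machinery (the generalized Denef--Loeser change-of-variables Lemma~\ref{lem:CoV}, the stratification by orders of vanishing along exceptional divisors, the virtual Poincaré polynomial, and the Puiseux/H\"older lifting). That is the right machine, but it is aimed one level below the target: in the paper this Corollary is a short deduction from Theorem~\ref{thm:Main} by translating hypotheses and conclusions through the dictionary available on a non-singular $X$, and your argument stops precisely at the output of Theorem~\ref{thm:Main} rather than at the conclusion of the Corollary.

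Concretely, you conclude that $f^{-1}$ is blow-Nash and satisfies the Jacobian hypothesis, whereas the Corollary claims $f^{-1}$ is arc-analytic and $|\det\d f^{-1}|>\tilde c$. Two translation steps are missing. First, since $X$ is non-singular and $f^{-1}$ is semialgebraic, blow-Nash is equivalent to arc-analytic by Theorem~\ref{thm:blowNash} (equivalently Lemma~\ref{lem:genAA} together with Proposition~\ref{prop:blowNash}); this is easy but needs to be said. Second and more seriously, you never pass from the Jacobian hypothesis for $f^{-1}$ to a lower bound on $|\det\d f^{-1}|$; this is the converse of the chain-rule observation in Example~\ref{eg:jachyp} and is the only genuinely substantive part of the Corollary once Theorem~\ref{thm:Main} is in hand. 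Taking a resolution $\rho$ with $f^{-1}\circ\rho$ Nash and simultaneously normal-crossings critical divisors $\sum\mu_i F_i$ (for $\rho$) and $\sum\tilde\mu_i F_i$ (for $f^{-1}\circ\rho$), the inequality $\tilde\mu_i\le\mu_i$ together with $(\det\d f^{-1})\circ\rho=\det\d(f^{-1}\circ\rho)/\det\d\rho$ shows the quotient has nonpositive vanishing order along each $F_i$, hence is locally bounded away from zero near the exceptional set; away from it $\rho$ is an isomorphism, which gives the claimed local bound. Without this, writing ``Hence $f^{-1}$ also satisfies the Jacobian hypothesis, completing the proof'' leaves the Corollary unproved. (Symmetrically, you silently used the forward implication $|\det\d f|>c\Rightarrow\nu_i\ge\tilde\nu_i$ to launch the argument; that direction also deserves a sentence, and note Example~\ref{eg:jachyp} is stated only for regular $f$, so you should check the chain-rule reasoning persists for arc-analytic $f$.) Finally, your last paragraph ``running the computation with roles swapped'' is not quite what the paper does and risks circularity, since the swapped diagram presupposes a resolution making $f^{-1}\circ\sigma'$ Nash, which you are still establishing; the paper instead extracts the equality $\nu_i=\tilde\nu_i$ directly from the original diagram (Lemma~\ref{lem:nuinuitilde}) and transfers it by the flattening/chain-rule argument of Lemma~\ref{lem:hyp}.
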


\begin{rem}
We recover \cite[Theorem 1.1]{FKP10} using the last corollary and \cite[Corollary 2.2 \& Corollary 2.3]{FKP10}.
\end{rem}

\section{Proof of the main theorem}

\subsection{Change of variables}
An algebraic version of the following lemma was already known in \cite{Elk73}, \cite{Pop84} or \cite[\S2]{Pop85} with a proof in \cite[4.1]{Swa98}. 
The statement given below is more geometric and the proof is quite elementary. 
\begin{lemma}\label{lem:Hideal}
Let $X$ be a $d$-dimensional algebraic subset of $\mathbb R^N$. We consider the following ideal of $\mathbb R[x_1,\ldots,x_N]$
$$H=\sum_{f_1,\ldots,f_{N-d}\in I(X)}\Delta(f_1,\ldots,f_{N-d})\left((f_1,\ldots,f_{N-d}):I(X)\right)$$
where $\Delta(f_1,\ldots,f_{N-d})$ is the ideal generated by the $(N-d)$-minors of the matrix $\left(\frac{\partial f_i}{\partial x_j}\right)_{{i=1,\ldots,N-d\atop j=1,\ldots,N}}$.
Then $V(H)$ is the singular locus\footnote{By singular locus we mean the complement of the set of non-singular points in dimension $d$ as in \cite[3.3.13]{BCR} (and not the complement of non-singular points in every dimension). We may avoid this precision with the supplementary hypothesis that every irreducible component of $X$ is of dimension $d$ or in the pure dimensional case.} $X_\sing$ of $X$.
\end{lemma}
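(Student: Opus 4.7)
The plan is to prove $V(H)=X_\sing$ by double inclusion, using the following recurring algebraic mechanism: if $g\in((f_1,\ldots,f_{N-d}):I(X))$ satisfies $g(x)\neq 0$, then $g$ is a unit in the regular local ring $R:=\mathbb R[x_1,\ldots,x_N]_{\mathfrak m_x}$ of the ambient space at $x$, so the colon relation $g\cdot I(X)\subseteq(f_1,\ldots,f_{N-d})$ upgrades to the ideal equality $I(X)R=(f_1,\ldots,f_{N-d})R$. Clearing denominators against a finite generating set of $I(X)$ then forces the real zero sets $V_\mathbb R(f_1,\ldots,f_{N-d})$ and $X=V_\mathbb R(I(X))$ to coincide on some Euclidean neighborhood of $x$.

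For $V(H)\subseteq X_\sing$ I separate two cases. If $x\notin X$, pick $h\in I(X)$ with $h(x)\neq 0$, and, after replacing $h$ by $h\bigl(1+\sum_k c_k(x_k-x^{(k)})\bigr)$, assume $dh|_x$ has a nonzero coordinate. Then, up to a permutation of the coordinates, the tuple $f_1=h$, $f_i=h\cdot(x_{i-1}-x^{(i-1)})$ for $2\le i\le N-d$ lies in $I(X)$ and admits a nonzero $(N-d)$-minor $\delta$ at $x$ by a direct computation; moreover $f_1(x)\neq 0$ together with $x\notin X$ forces both $(f_1,\ldots,f_{N-d})R$ and $I(X)R$ to equal $R$, so the colon ideal also localizes to $R$ and contains some $g$ with $g(x)\neq 0$. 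If instead $x\in\Reg_d X$, the local ring $\mathcal O_{X,x}$ is regular of dimension $d$, so the height-$(N-d)$ ideal $I(X)R$ in the regular local ring $R$ of dimension $N$ is generated by a regular sequence $f_1,\ldots,f_{N-d}\in I(X)$ whose differentials at $x$ are linearly independent; then, writing $I(X)=(q_1,\ldots,q_m)$ and choosing $s_i\notin\mathfrak m_x$ with $s_iq_i\in(f_1,\ldots,f_{N-d})$, the product $g=s_1\cdots s_m$ belongs to the colon ideal with $g(x)\neq 0$. In both cases $g\cdot\delta$ is an explicit generator of $H$ not vanishing at $x$, so $x\notin V(H)$.

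For the reverse inclusion $X_\sing\subseteq V(H)$ I argue by contradiction: suppose $x\in X_\sing$ but some generator $g\cdot\delta$ of $H$ is nonzero at $x$, with $\delta$ an $(N-d)$-minor of $\Jac(f_1,\ldots,f_{N-d})$. The opening mechanism yields $V_\mathbb R(f_1,\ldots,f_{N-d})=X$ on a neighborhood of $x$, whereas $\delta(x)\neq 0$ together with the real implicit function theorem makes $V_\mathbb R(f_1,\ldots,f_{N-d})$ a Nash submanifold of dimension $d$ near $x$. Combining, $X$ itself is a Nash submanifold of dimension $d$ near $x$, so $x\in\Reg_d X$, contradicting $x\in X_\sing$.

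The main obstacle is the smooth-point step of the first inclusion: producing a concrete $(N-d)$-tuple $f_1,\ldots,f_{N-d}\in I(X)$ that generates $I(X)R$ and whose Jacobian achieves rank $N-d$ at $x$. Over $\mathbb C$ this is the standard Jacobian criterion for complete intersections in a regular local ring, but over $\mathbb R$ the ideal--zero locus correspondence is not an equivalence, so one must invoke the real Jacobian criterion (as recorded in \cite[3.3.7, 3.3.10 and 4.5.1]{BCR}) and verify that algebraic-local equality of ideals is compatible with coincidence of the real zero sets on a Euclidean neighborhood.
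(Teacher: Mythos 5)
Your proof is correct and follows essentially the same route as the paper's: double inclusion, with the $\Reg_d X$ case handled via the regular local ring $\mathcal O_{X,x}$ and the Jacobian criterion plus clearing of denominators, the points off $X$ via an explicit complete-intersection tuple with full-rank Jacobian, and the converse direction by combining $\delta(x)\neq 0$ (implicit function theorem) with the local set-theoretic equality $V(f_1,\ldots,f_{N-d})\cap U=X\cap U$ forced by $h(x)\neq 0$. The only place you diverge is the off-$X$ construction, where you produce the tuple $f_1=h$, $f_i=h(x_{i-1}-x^{(i-1)})$ in one shot (with the explicit minor $\pm h(x)^{N-d-1}\partial_k h(x)$), whereas the paper builds $g_i=a_if_i$ by an inductive wedge-product argument — a cosmetic variation on the same idea.
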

\begin{proof}
Let $x\notin V(H)$ then there exist $f_1,\ldots,f_{N-d}\in I(X)$, $\delta$ a $(N-d)$-minor of $\left(\frac{\partial f_i}{\partial x_j}\right)_{{i=1,\ldots,N-d\atop j=1,\ldots,N}}$ and $h\in\mathbb R[x_1,\ldots,x_N]$ with $hI(X)\subset(f_1,\ldots,f_{N-d})$ and $h\delta(x)\neq0$. Since $\delta(x)\neq0$, $x$ is a non-singular point of $V(f_1,\ldots,f_{N-d})$. Furthermore we have $X=V(I(X)))\subset V(f_1,\ldots,f_{N-d})\subset V(hI(X))$ and, since $h(x)\neq 0$, in an open neighborhood $U$ of $x$ in $\mathbb R^N$ we have $V(hI(X))\cap U=X\cap U$. Hence $V(f_1,\ldots,f_{N-d})\cap U=X\cap U$. So $x$ is a non-singular point of $X$ by \cite[Proposition 3.3.10]{BCR}. We proved that $X_\sing\subset V(H)$.

Now, assume that $x\in X\setminus X_\sing$. With the notation of \cite[\S3]{BCR}, the local ring $\mathcal R_{X,x}=\quotient{\mathcal R_{\mathbb R^N,x}}{I(X)\mathcal R_{\mathbb R^N,x}}$ is regular, so we may find a regular system of parameters $(f_1,\ldots,f_N)$ of $\mathcal R_{X,x}$ such that $I(X)\mathcal R_{\mathbb R^N,x}=(f_1,\ldots,f_{N-d})\mathcal R_{\mathbb R^N,x}$ by \cite[VI.1.8\&VI.1.10]{Kun85}\footnote{Since $\mathcal R_{\mathbb R^N,x}=\mathbb R[x_1,\ldots,x_N]_{\mathfrak m_x}$} (see also \cite[Proposition 3.3.7]{BCR}). Moreover, we may assume that the $f_1,\ldots,f_{N-d}$ are polynomials. We may use the following classical argument. $\theta:\mathbb R[x_1,\ldots,x_N]\rightarrow\mathbb R^N$ defined by $f\mapsto f(x)$ induces an isomorphism $\theta':\mathfrak m_x/\mathfrak m_x^2\rightarrow\mathbb R^N$. Then $\rk\left(\frac{\partial f_i}{\partial x_j}(x)\right)=\dim\theta((f_1,\ldots,f_{N-d}))$ which is, by $\theta'$, the dimension of $\quotient{((f_1,\ldots,f_{N-d})+\mathfrak m_x^2)}{\mathfrak m_x^2}$ as a subspace of $\quotient{\mathfrak m_x}{\mathfrak m_x^2}$. If we denote by $\mathfrak m$ the maximal ideal of $\mathcal R_{X,x}=\left(\quotient{\mathbb R[x_1,\ldots,x_N]}{(f_1,\ldots,f_{N-d})}\right)_{\mathfrak m_x}$, we have $\quotient{\mathfrak m}{\mathfrak m^2}\simeq\quotient{\mathfrak m_x}{((f_1,\ldots,f_{N-d})+\mathfrak m_x^2)}$. So we have $\dim\left(\quotient{\mathfrak m}{\mathfrak m^2}\right)+\rk\left(\frac{\partial f_i}{\partial x_j}(x)\right)=N$. Furthermore, since $\mathcal R_{X,x}$ is a $d$-dimensional regular local ring, $\dim\left(\quotient{\mathfrak m}{\mathfrak m^2}\right)=d$. Hence $\left(\frac{\partial f_i}{\partial x_j}(x)\right)_{{i=1,\ldots,N-d\atop j=1,\ldots,N}}$ is of rank $N-d$ and so there exists $\delta$ a $(N-d)$-minor of $\left(\frac{\partial f_i}{\partial x_j}\right)_{{i=1,\ldots,N-d\atop j=1,\ldots,N}}$ such that $\delta(x)\neq0$. Assume that $I(X)=(g_1,\ldots,g_r)$ in $\mathbb R[x_1,\ldots,x_N]$. Then $g_i=\sum\frac{f_j}{q_j}$ with $q_j(x)\neq 0$, so $g_ih_i\subset(f_1,\ldots,f_{N-d})$ with $h_i=\prod q_j$. Then $h=\prod h_i$ satisfies $h(x)\neq0$ and $hI(X)\subset (f_1,\ldots,f_{N-d})$. So $x\notin V(H)$. Hence $V(H)\subset X_\sing\cup(\mathbb R^N\setminus X)$.

To complete the proof, it remains to prove that $V(H)\subset X$. Let $x\notin X$. There exist $f_1,\ldots,f_{N-d}\in I(X)$ such that $f_i(x)\neq 0$. We construct by induction $N-d$ polynomials of the form $g_i=a_if_i$ with $g_i(x)\neq 0$ and $(\d g_1\wedge\cdots\wedge\d g_{N-d})_x\neq0$. Suppose that $g_1,\ldots,g_{j-1}$ are constructed, if $(\d g_1\wedge\cdots\wedge\d g_{j-1}\wedge\d f_j)_x\neq 0$, we can take $a_j=1$, so we may assume that $(\d g_1\wedge\cdots\wedge\d g_{j-1}\wedge\d f_j)_x=0$. Then we just have to take some $a_j$ satisfying $(\d g_1\wedge\cdots\wedge\d g_{j-1}\wedge\d a_j)_x\neq 0$ and $a_j(x)\neq0$ since  $(\d g_1\wedge\cdots\wedge\d g_{j-1}\wedge\d (a_jf_j))_x=f_j(x)(\d g_1\wedge\cdots\wedge\d g_{j-1}\wedge\d a_j)_x$. Then we have $g_1,\ldots,g_{N-d}\in I(X)$ whose a $(N-d)$-minor $\delta$ satisfies $\delta(x)\neq0$. Moreover we have $g_i(x)\neq0$ and $g_iI\subset(g_1,\ldots,g_{N-d})$. So $x\notin V(H)$. 
\end{proof}

\begin{defn}
Let $X$ be an algebraic subset of $\mathbb R^N$. For $e\in\mathbb N$, we set $$\mathcal L^{(e)}(X)=\left\{\gamma\in\mathcal L(X),\,\exists g\in H,\,g(\gamma(t))\nequiv0\mod t^{e+1}\right\}$$ where $H$ is defined in Lemma \ref{lem:Hideal}.
\end{defn}

\begin{rem}
$\displaystyle\mathcal L(X)=\left(\bigcup_{e\in\mathbb N}\mathcal L^{(e)}(X)\right)\bigsqcup\mathcal L(X_\sing)$
\end{rem}

\begin{rem}
In \cite{DL99}, Denef--Loeser set $\mathcal L^{(e)}(X)=\mathcal L(X)\setminus\pi^{-1}_e\left(\mathcal L_e(X_\sing)\right)$ and used the Nullstellensatz to get that $I(X_\sing)^c\subset H$ for some $c$ since $X_\sing=V(H)$. Since we can't do that in our case, we defined differently $\mathcal L^{(e)}(X)$.
\end{rem}

The following lemma is an adaptation of Denef--Loeser key lemma \cite[Lemma 3.4]{DL99} to fulfill our settings. 
The aim of the above-mentioned lemma is to allow the proof of a generalization of Kontsevich's birational transformation rule (change of variables) of \cite{Kon95} to handle singularities. We can find a first adaption to our settings in the non-singular case in \cite[Lemma 4.2]{KP03}.

\begin{lemma}\label{lem:CoV}
Let $\sigma:M\rightarrow X$ be a proper generically\footnote{i.e. $\sigma$ is a Nash map which is one-to-one away from a subset $S$ of $X$ with $\dim S<\dim X$.} one-to-one Nash map where $M$ is a non-singular algebraic subset of $\mathbb R^p$ of 
dimension $d$ and $X$ an algebraic subset of $\mathbb R^N$ of dimension $d$. 
For $e,e'\in\mathbb N$, we set $$\Delta_{e,e'}=\left\{\gamma\in\mathcal L(M),\,\ord_t\left(\Jac_\sigma(\gamma(t))\right)=e,\,\sigma_*(\gamma)\in\mathcal L^{(e')}(X)\right\}$$
For $n\in\mathbb N$, let $\Delta_{e,e',n}$ be the image of $\Delta_{e,e'}$ by $\pi_n$.
Let $e,e',n\in\mathbb N$ with $n\ge\operatorname{max}(2e,e')$, then:
\begin{enumerate}[label=(\roman*), ref=\ref{lem:CoV}.(\roman*)]
\item\label{item:CoV0} Given $\gamma\in\Delta_{e,e'}$ and $\delta\in\mathcal L(X)$ with $\sigma_*(\gamma)\equiv\delta\mod t^{n+1}$ there exists a unique $\eta\in\mathcal L(M)$ such that $\sigma_*(\eta)=\delta$ and $\eta\equiv\gamma\mod t^{n-e+1}$.
\item\label{item:CoV1} Let $\gamma,\eta\in\mathcal L(M)$. If $\gamma\in\Delta_{e,e'}$ and $\sigma(\gamma)\equiv\sigma(\eta)\mod t^{n+1}$ then $\gamma\equiv\eta\mod t^{n-e+1}$ and $\eta\in\Delta_{e,e'}$.
\item\label{item:CoV1prime} The set $\Delta_{e,e',n}$ is a union of fibers of $\sigma_{*n}$.
\item\label{item:CoV2} $\sigma_{*n}(\Delta_{e,e',n})$ is constructible and $\sigma_{*n|\Delta_{e,e',n}}:\Delta_{e,e',n}\rightarrow \sigma_{*n}(\Delta_{e,e',n})$ is a piecewise trivial fibration\footnote{By a trivial piecewise fibration, we mean there exist a finite partition of $\sigma_{*n}(\Delta_{e,e',n})$ with constructible parts and a trivial fibration given by a constructible isomorphism over each part.} with fiber $\mathbb R^e$.
\end{enumerate}
\end{lemma}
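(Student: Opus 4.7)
The plan is to adapt Denef--Loeser's proof of their key lemma \cite[Lemma 3.4]{DL99} to our singular, generically one-to-one setting, using Lemma \ref{lem:Hideal} in place of smoothness of $X$ and Theorem \ref{thm:oddfibers} in place of birationality arguments. The four assertions are proved in order (i)--(iv), the heart being (i). For (i), I would carry out a Newton/Hensel iteration in local coordinates on $M$ around $\gamma(0)$, seeking $\eta = \gamma + t^{n-e+1}\mu(t)$ with $\mu$ determined by successive approximation. Since $\sigma_*(\gamma) \in \mathcal{L}^{(e')}(X)$, one may pick $g = h\delta' \in H$ with $\ord_t g(\sigma_*(\gamma)) \le e'$, where $f_1,\ldots,f_{N-d} \in I(X)$ and $h \in \mathbb{R}[x_1,\ldots,x_N]$ satisfy $hI(X) \subset (f_1,\ldots,f_{N-d})$ and $\delta'$ is an $(N-d)$-minor of $\left(\frac{\partial f_i}{\partial x_j}\right)$. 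This presents $X$ as a local complete intersection along the generic locus of $\sigma_*(\gamma)$, and the Newton step reduces to inverting $\Jac_\sigma(\gamma(t))$ modulo a power of $t$, which is possible up to an error of order $t^e$ thanks to $\ord_t \Jac_\sigma(\gamma) = e$. The hypothesis $n \ge \max(2e, e')$ controls both the quadratic Taylor error (via $n \ge 2e$) and the persistence of the $\mathcal{L}^{(e')}$-defect along the iteration (via $n \ge e'$), ensuring $t$-adic convergence. Uniqueness comes from the same Jacobian estimate applied to a putative difference $\eta_1 - \eta_2 \in t^{n-e+1}\mathcal{L}(M)$ with $\sigma_*(\eta_1) = \sigma_*(\eta_2)$, whose leading term must vanish order by order.

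Assertion (ii) is deduced by applying (i) with $\delta = \sigma_*(\eta)$ to obtain a unique Newton lift $\eta'$ satisfying $\sigma_*(\eta') = \sigma_*(\eta)$ and $\eta' \equiv \gamma \mod t^{n-e+1}$. The conditions $n \ge 2e$ and $n \ge e'$ respectively preserve $\ord_t \Jac_\sigma = e$ and $\mathcal{L}^{(e')}$-membership along this congruence, so $\eta' \in \Delta_{e,e'}$. To conclude $\eta = \eta'$---which yields both $\gamma \equiv \eta \mod t^{n-e+1}$ and $\eta \in \Delta_{e,e'}$---enlarge the exceptional set $S$ of generic one-to-oneness to an algebraic subset of dimension $< d$ containing $X_\sing$; the combined conditions $\sigma_*(\eta') \in \mathcal{L}^{(e')}(X)$ and $\ord_t \Jac_\sigma(\eta') = e$ then ensure $\sigma_*(\eta')$ is not entirely in this enlarged $S$, so at generic $t$ we have $\eta(t) = \sigma^{-1}(\sigma(\eta(t))) = \eta'(t)$ and $\eta = \eta'$ everywhere by analyticity. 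Assertion (iii) is then immediate: if $\sigma_{*n}(\gamma'_n) = \sigma_{*n}(\gamma_n)$ for $\gamma_n = \pi_n(\gamma)$ with $\gamma \in \Delta_{e,e'}$, lift $\gamma'_n$ to any $\gamma' \in \mathcal{L}(M)$ (possible since $M$ is smooth, by Proposition \ref{prop:surj}); the congruence $\sigma(\gamma) \equiv \sigma(\gamma') \mod t^{n+1}$ is automatic, and (ii) gives $\gamma' \in \Delta_{e,e'}$, hence $\gamma'_n \in \Delta_{e,e',n}$.

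For (iv), the Newton analysis makes the fibers of $\sigma_{*n}|_{\Delta_{e,e',n}}$ explicit: a nonempty fiber over $\sigma_{*n}(\gamma_n)$ consists of jets $\gamma_n + t^{n-e+1}\nu(t)$ with $\nu \in \mathcal{L}_{e-1}(\mathbb{R}^d)$ subject to $\Jac_\sigma(\gamma(t)) \cdot \nu(t) \equiv 0 \mod t^e$. Via Smith normal form of $\Jac_\sigma(\gamma(t))$ over $\mathbb{R}[[t]]$, whose elementary divisors $t^{a_1}, \ldots, t^{a_d}$ satisfy $\sum a_i = \ord_t \Jac_\sigma(\gamma) = e$, a direct count of free coefficients identifies the fiber with an affine space of dimension exactly $e$. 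Since $\chi(\mathbb{R}^e) = (-1)^e$ is odd, Theorem \ref{thm:oddfibers} applied to the polynomial map $\sigma_{*n}$ yields that $\sigma_{*n}(\Delta_{e,e',n})$ is in $\mathcal{AS}$. Piecewise triviality is obtained by stratifying the base according to the elementary divisors of $\Jac_\sigma$ and the local Lemma \ref{lem:Hideal}-data, parametrizing the Newton fibers over each stratum explicitly. The principal obstacle is (i): setting up the Newton iteration over a singular target requires the quantitative ideal-theoretic description of $X_\sing$ given by Lemma \ref{lem:Hideal}, and the hypothesis $n \ge \max(2e, e')$ encodes precisely the balance between the Jacobian defect $e$ and the $H$-defect $e'$ needed for the iteration to close.
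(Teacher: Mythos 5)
Your proposal gets the overall architecture right: reduce to a local complete intersection presentation via Lemma~\ref{lem:Hideal} (as in Lemma~\ref{lem:CI}), run a Hensel/Newton argument for (i), deduce (ii) and (iii) from it, and for (iv) compute the fiber dimension via Smith normal form and use Theorem~\ref{thm:oddfibers} for constructibility of the image. That is indeed the paper's route. But there are three concrete gaps.

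First, in (i), the statement that ``the Newton step reduces to inverting $\Jac_\sigma(\gamma(t))$ modulo a power of $t$'' cannot be taken at face value: $\Jac_\sigma$ is an $N\times d$ rectangular matrix, and there is nothing to invert. The key algebraic observation the paper extracts from the complete-intersection presentation is that $\Delta(\sigma(\gamma))\Jac_\sigma(\gamma)=0$ (because $f_i\circ\sigma\equiv 0$), where $\Delta=\left(\frac{\partial f_i}{\partial x_j}\right)$. Multiplying by the transposed comatrix $P$ of the distinguished $(N-d)\times(N-d)$ block shows that the first $N-d$ rows of $\Jac_\sigma(\gamma)$ are $\mathbb R\{t\}$-linear combinations of the last $d$. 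This does two things at once: it lets one replace $\Jac_\sigma$ by the square $d\times d$ matrix $\Jac_{p\circ\sigma}$ (where $p$ is the projection to the last $d$ coordinates) having the same order $e$, so $t^{-e}\Jac_{p\circ\sigma}(\gamma(t))$ is genuinely invertible over $\mathbb R\{t\}$; and it shows that the perturbation $v$ coming from $\delta\in\mathcal L(X)$ satisfies the same linear relations, so the reduced $d$-dimensional fixed-point equation is well posed. Without this observation the iteration you describe does not close, and the role of the complete-intersection data remains purely decorative.

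Second, in (iv), Theorem~\ref{thm:oddfibers} applies to \emph{regular} morphisms of real algebraic varieties, whereas $\sigma_{*n}$ is only Nash. The paper bridges this by first invoking the Artin--Mazur theorem to replace $\sigma$ with a polynomial map $g$ via a Nash diffeomorphism $s:M\to M'\subset Y$, and only then applies Theorem~\ref{thm:oddfibers}. Your proposal applies the theorem directly to the Nash map $\sigma_{*n}$, which is not covered by the hypotheses.

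Third, your piecewise-triviality argument --- ``stratifying the base according to the elementary divisors and parametrizing the Newton fibers explicitly over each stratum'' --- is exactly where the failure of birationality bites, and is the one place the paper itself flags a departure from \cite[Lemma 3.4]{DL99}. In the birational case one chooses a section of $\sigma_{*n}$ over each stratum, and the section provides the base point needed to identify each fiber with a fixed $\mathbb R^e$. No such section exists here, and your sketch does not explain how a distinguished point of each fiber (or, equivalently, a constructible trivialization) is to be produced. The paper instead runs a noetherianity argument: fix a linear projection $\Lambda_0$ onto a coordinate subspace $V_0$ of dimension $e$, let $\Omega_0$ be the closed constructible locus where the fiber projects to something of dimension $<e$, observe that $(\sigma_{*n},\Lambda_0)$ is a constructible isomorphism off $\Omega_0$, and iterate on $\sigma_{*n}(\Omega_0)$, terminating by noetherianity of the $\mathcal{AS}$-topology. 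You need some such replacement for the missing section; as written, piecewise triviality is asserted rather than proved.
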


\begin{rem}
It is natural to use Taylor expansion to prove some approximation theorems concerning power series as we are going to do for \ref{item:CoV0}. For instance, we may find similar argument in \cite{Gre66}, \cite{Art69}, or \cite{Elk73}. For \ref{item:CoV0}, we will follow the proof of \cite[Lemma 3.4]{DL99} with some differences to match our framework. Concerning \ref{item:CoV2}, we can't use anymore the section argument of \cite{DL99} since $\sigma$ is not assumed to be birational.
\end{rem}

\begin{lemma}[Reduction to complete intersection]\label{lem:CI}
Let $X$ be an algebraic subset of $\mathbb R^N$ of dimension $d$. 
For each $e\in\mathbb N$, $\mathcal L^{(e)}(X)$ is covered by a finite number of sets of the form $$A_{h,\delta}=\left\{\gamma\in\mathcal L(\mathbb R^N),\,(h\delta)(\gamma)\nequiv0\mod t^{e+1}\right\}$$ with $\delta$ a $N-d$-minor of the matrix $\left(\frac{\partial f_i}{\partial x_j}\right)_{{i=1,\ldots,N-d\atop j=1,\ldots,N}}$ and $h\in\left((f_1,\ldots,f_{N-d}):I(X)\right)$ for some $f_1,\ldots,f_{N-d}\in I(X)$. \\
Moreover, $$\mathcal L(X)\cap A_{h,\delta}=\left\{\gamma\in\mathcal L\left(\mathbb R^N\right),\,f_1(\gamma)=\cdots=f_{N-d}(\gamma)=0,\,h\delta(\gamma)\nequiv0\mod t^{e+1}\right\}$$
\end{lemma}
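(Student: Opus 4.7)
The plan is to use noetherianity of $\mathbb R[x_1,\ldots,x_N]$ to make the (a priori infinite) sum defining the ideal $H$ manageable, and then to exploit the defining property of the colon ideal for the second identity.

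First I would observe that since $H = \sum_{(f_1,\ldots,f_{N-d}) \in I(X)^{N-d}} \Delta(f_1,\ldots,f_{N-d})\bigl((f_1,\ldots,f_{N-d}):I(X)\bigr)$ is an ideal in the noetherian ring $\mathbb R[x_1,\ldots,x_N]$, the sum actually stabilizes after finitely many tuples $(f_1^{(k)},\ldots,f_{N-d}^{(k)})$, $k=1,\ldots,s$. Each of the finitely many summands is itself a finitely generated ideal, so choosing finitely many generators of each $\bigl((f_1^{(k)},\ldots,f_{N-d}^{(k)}):I(X)\bigr)$, and multiplying by each of the finitely many $(N-d)$-minors $\delta$ of the corresponding Jacobian matrix, I obtain a finite list of generators of $H$ each of the shape $h_j\delta_j$ with $h_j \in \bigl((f_1^{(k_j)},\ldots,f_{N-d}^{(k_j)}):I(X)\bigr)$ and $\delta_j$ an $(N-d)$-minor of $\mathrm{Jac}(f_1^{(k_j)},\ldots,f_{N-d}^{(k_j)})$.

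Next, for any $\gamma \in \mathcal L^{(e)}(X)$, there exists $g \in H$ with $g(\gamma(t)) \nequiv 0 \bmod t^{e+1}$. Writing $g = \sum_j a_j\, h_j\delta_j$ with $a_j \in \mathbb R[x_1,\ldots,x_N]$, at least one term $(a_j h_j \delta_j)(\gamma(t))$ must be non-zero mod $t^{e+1}$, which forces $(h_j\delta_j)(\gamma(t)) \nequiv 0 \bmod t^{e+1}$; thus $\gamma \in A_{h_j,\delta_j}$, and the first assertion is proved.

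For the second assertion, the inclusion $\mathcal L(X)\cap A_{h,\delta} \subset \{\gamma:f_1(\gamma)=\cdots=f_{N-d}(\gamma)=0,\ (h\delta)(\gamma)\nequiv 0\bmod t^{e+1}\}$ is immediate since $f_i \in I(X)$. Conversely, suppose $\gamma \in \mathcal L(\mathbb R^N)$ satisfies $f_i(\gamma)=0$ for $i=1,\ldots,N-d$ and $(h\delta)(\gamma) \nequiv 0 \bmod t^{e+1}$. The latter in particular gives $h(\gamma) \neq 0$ in $\mathbb R\{t\}$. For any $f \in I(X)$, the definition of the colon ideal yields $hf \in (f_1,\ldots,f_{N-d})$, so we may write $hf = \sum c_i f_i$, and evaluating at $\gamma$ gives $h(\gamma)\,f(\gamma)=0$. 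Since $\mathbb R\{t\}$ is an integral domain and $h(\gamma)\neq 0$, this forces $f(\gamma) = 0$; as $f \in I(X)$ was arbitrary, $\gamma \in \mathcal L(X)$, completing the equality.

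The argument is essentially bookkeeping once noetherianity is invoked; the only point requiring a little care is using integrality of $\mathbb R\{t\}$ to conclude $f(\gamma)=0$ from $h(\gamma)f(\gamma)=0$, which is what makes the non-vanishing condition on $h\delta$ the right one to impose.
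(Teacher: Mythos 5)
Your argument is correct and follows the same route the paper uses: invoke noetherianity of $\mathbb R[x_1,\ldots,x_N]$ to reduce $H$ to finitely many generators of the form $h_j\delta_j$, note that the order of $a_j(\gamma)\,h_j\delta_j(\gamma)$ is at least that of $h_j\delta_j(\gamma)$ so at least one generator must already detect $\gamma\in\mathcal L^{(e)}(X)$, and then use $h\,I(X)\subset(f_1,\ldots,f_{N-d})$ together with integrality of $\mathbb R\{t\}$ and $h(\gamma)\neq 0$ to recover $\mathcal L(X)$ from the vanishing of $f_1,\ldots,f_{N-d}$ alone. This matches the paper's proof, merely spelling out the two steps that the paper leaves implicit.
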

\begin{rem}
We may have different polynomials $f_1,\ldots,f_{N-d}$ for two different $A_{h,\delta}$.
\end{rem}

\begin{proof}
By noetherianity, we may assume that $H=(h_1\delta_1,\ldots,h_r\delta_r)$ with $h_i,\delta_i$ as desired. Therefore, $\mathcal L^{(e)}(X)\subset\cup A_{h_i,\delta_i}$. \\
Finally,
\begin{align*}
\mathcal L(X)\cap A_{h,\delta} &= \left\{\gamma\in\mathcal L\left(\mathbb R^N\right),\,\forall f\in I(X),\,f(\gamma)=0,\,h\delta(\gamma)\nequiv0\mod t^{e+1}\right\} \\
&= \left\{\gamma\in\mathcal L\left(\mathbb R^N\right),\,f_1(\gamma)=\cdots=f_{N-d}(\gamma)=0,\,h\delta(\gamma)\nequiv0\mod t^{e+1}\right\}
\end{align*}
Indeed, for the second equality, if $f\in I(X)$ then $hf\in(f_1,\ldots,f_{N-d})$, hence if $\gamma$ vanishes the $f_i$, then $hf(\gamma)=0$, and so $f(\gamma)=0$ since $h(\gamma)\neq0$.
\end{proof}

\begin{proof}[Proof of Lemma \ref{lem:CoV}]
We first notice that \ref{item:CoV1prime} is a consequence of \ref{item:CoV1}: $\forall\pi_n(\gamma)\in\Delta_{e,e',n}$ we have
\begin{align}
\begin{split}
\pi_n(\gamma)\in\sigma_{*n}^{-1}(\sigma_{*n}(\pi_n(\gamma)))
    &= \left\{\pi_n(\eta),\,\eta\in\mathcal L(M),\,\sigma(\eta)\equiv\sigma(\gamma)\mod t^{n+1}\right\}\text{ using that $\mathcal L(M)\rightarrow\mathcal L_n(M)$}\\
    &  \text{is surjective since $M$ is smooth and that $\pi_n\circ\sigma_*=\sigma_{*n}\circ\pi_n$.}
\end{split}\nonumber \\
    &\subset \left\{\eta\in\mathcal \Delta_{e,e',n},\,\gamma\equiv\eta\mod t^{n-e+1}\right\}\subset\Delta_{e,e',n} \text{ by \ref{item:CoV1}}\nonumber
\end{align}
Next \ref{item:CoV1} is a direct consequence of \ref{item:CoV0}. We apply \ref{item:CoV0} to $\gamma$ with $\delta=\sigma_*(\eta)$, hence there exists a unique $\tilde\eta$ such that $\tilde\eta\equiv\gamma\mod t^{n-e+1}$ and $\sigma_*(\tilde\eta)=\sigma_*(\eta)$. 
By the assumptions on $\sigma$ and the definition of $\Delta_{e,e'}$, for $\varphi_1\in\mathcal L(M)$ and $\varphi_2\in\Delta_{e,e'}$ with $\varphi_1\neq\varphi_2$ we have $\sigma(\varphi_1)\neq\sigma(\varphi_2)$. 
Hence $\eta=\tilde\eta$ and $\eta\equiv\gamma\mod t^{n-e+1}$. Since $\sigma(\gamma)\equiv\sigma(\eta)\mod t^{n+1}$ and $n\ge e'$, $\sigma(\eta)\in\mathcal L^{(e')}(X)$. We may write $\eta(t)=\gamma(t)+t^{n+1-e}u(t)$ and applying Taylor expansion to $\Jac_\sigma(\gamma(t)+t^{n+1-e}u(t))$ we get that $\Jac_\sigma(\eta(t))\equiv\Jac_\sigma(\gamma(t))\mod t^{e+1}$ since $n+1-e\ge e+1$. So $\eta\in\Delta_{e,e'}$.

So we just have to prove \ref{item:CoV0} and \ref{item:CoV2}.

We begin to refine the cover of Lemma \ref{lem:CI}: for $e''\le e'$, we set $$A_{h,\delta,e''}=\left\{\gamma\in A_{h,\delta},\,\ord_t\delta(\gamma)=e''\text{and $\ord_t\delta'(\gamma)\ge e''$ for all $(N-d)$-minor $\delta'$ of $\left(\frac{\partial f_i}{\partial x_j}\right)_{{i=1,\ldots,N-d\atop j=1,\ldots,N}}$}\right\}$$
Fix some $A=A_{h,\delta,e''}$, then it suffices to prove the lemma for $\Delta_{e,e'}\cap\sigma^{-1}(A)$. \\
Up to renumbering the coordinates, we may also assume that $\delta$ is the determinant of the first $N-d$ columns of $\Delta=\left(\frac{\partial f_i}{\partial x_j}\right)_{{i=1,\ldots,N-d\atop j=1,\ldots,N}}$.

We choose a local coordinate system of $M$ at $\gamma(0)$ in order to define $\Jac_\sigma$ and express arcs of $M$ as elements of $\mathbb R\{t\}^d$.

Now, a crucial observation is that the first $N-d$ rows of $\Jac_\sigma(\gamma)$ are $\mathbb R\{t\}$-linear combinations of the last $d$ rows: 
the application $$\begin{array}{ccccc}M&\longrightarrow&X&\longrightarrow&\mathbb R^{N-d}\\y&\longmapsto&\sigma(y)&\longmapsto&\left(f_i(\sigma(y))\right)_{i=1,\ldots,N-d}\end{array}$$ is identically zero, so its Jacobian matrix is identically zero too and thus $\Delta(\sigma(\gamma))\Jac_\sigma(\gamma)=0$. Let $P$ be the transpose of the comatrix of the submatrix of $\Delta$ given by the first $N-d$ columns of $\Delta$, then $P\Delta=(\delta I_{N-d},W)$. Moreover, we have $W(\sigma(\gamma))\equiv0\mod t^{e''}$. Indeed, if we denote $\Delta_1,\ldots,\Delta_{N-d}$ the $N-d$ first columns of $\Delta$ and $W_1,\ldots,W_d$ the columns of $W$, then $W_j(\sigma(\gamma))$ is solution of $\left(\Delta_1(\sigma(\gamma)),\ldots,\Delta_{N-d}(\sigma(\gamma))\right)X=\delta(\sigma(\gamma))\Delta_{N-d+j}(\sigma(\gamma))$ since
\begin{align*}\delta(\sigma(\gamma))\Delta(\sigma(\gamma))&=\left(\Delta_1(\sigma(\gamma)),\ldots,\Delta_{N-d}(\sigma(\gamma))\right)P(\sigma(\gamma))\Delta(\sigma(\gamma))\\&=\left(\Delta_1(\sigma(\gamma)),\ldots,\Delta_{N-d}(\sigma(\gamma))\right)\left(\delta(\sigma(\gamma))I_{N-d},W(\sigma(\gamma))\right)
\end{align*}
So, by Cramer's rule, $$\left(W_j(\sigma(\gamma))\right)_i=\det\left(\Delta_1(\sigma(\gamma)),\ldots,\Delta_{i-1}(\sigma(\gamma)),\Delta_{N-d+j}(\sigma(\gamma)),\Delta_{i+1}(\sigma(\gamma)),\ldots,\Delta_{N-d}(\sigma(\gamma))\right)$$
Finally, the congruence arises because the minor formed by the $N-d$ first columns is of minimal order by definition of $A$. \\
Now the columns of $\Jac_\sigma(\gamma)$ are solutions of
\begin{equation}
\left(t^{-e''}\cdot P(\sigma(\gamma)))\cdot\Delta(\sigma(\gamma))\right)X=0\label{eq:jac}
\end{equation}
 but since $t^{-e''}\cdot P(\sigma(\gamma)))\cdot\Delta(\sigma(\gamma))=\left(t^{-e''}\delta(\sigma(\gamma))I_{N-d},t^{-e''}W(\sigma(\gamma))\right)$ we may express the first $N-d$ coordinates of each solution in terms of the last $d$ coordinates. This completes the proof of the observation.

For \ref{item:CoV0}, it suffices to prove that for all $v\in\mathbb R\{t\}^N$ satisfying $\sigma(\gamma)+t^{n+1}v\in\mathcal L(X)$ there exists a unique $u\in\mathbb R\{t\}^d$ such that \begin{equation}\sigma(\gamma+t^{n+1-e}u)=\sigma(\gamma)+t^{n+1}v\label{eq:CoV1}\end{equation}
By Taylor expansion, we have \begin{equation}\sigma(\gamma(t)+t^{n+1-e}u)=\sigma(\gamma(t))+t^{n+1-e}\Jac_\sigma(\gamma(t))u+t^{2(n+1-e)}R(\gamma(t),u)\label{eq:CoV2}\end{equation} with $R(\gamma(t),u)$ analytic in $t$ and $u$. By \eqref{eq:CoV2}, \eqref{eq:CoV1} is equivalent to \begin{equation}t^{-e}\Jac_\sigma(\gamma(t))u+t^{n+1-2e}R(\gamma(t),u)=v\label{eq:CoV3}\end{equation} with $n+1-2e\ge1$ by hypothesis. \\
Since $\sigma(\gamma(t))+t^{n+1}v\in\mathcal L(X)$ and using Taylor expansion, we get $$0=f_i(\sigma(\gamma(t))+t^{n+1}v)=t^{n+1}\Delta(\sigma(\gamma(t)))v+t^{2(n+1)}S(\gamma(t),v)$$
with $S(\gamma(t),v)$ analytic in $t$ and $v$. So $v$ is a solution of \eqref{eq:jac} and hence the first $N-d$ coefficients of $v$ are $\mathbb R\{t\}$-linear combinations of the last $d$ coefficients with the same relations that for $\Jac_\sigma(\gamma)$. This allows us to reduce \eqref{eq:CoV3} to \begin{equation}t^{-e}\Jac_{p\circ\sigma}(\gamma(t))u+t^{n+1-2e}p\left(R(\gamma(t),u)\right)=p(v)\label{eq:CoV4}\end{equation} where $p:\mathbb R^N\rightarrow\mathbb R^{d}$ is the projection on the last $d$ coordinates. The observation ensures that $\ord_t\Jac_{p\circ\sigma}(\gamma(t))=\ord_t\Jac_{\sigma}(\gamma(t))=e$ and thus \eqref{eq:CoV4} is equivalent to \begin{equation}u=\left(t^{-e}\Jac_{p\circ\sigma}(\gamma(t))\right)^{-1}p(v)-t^{n+1-2e}\left(t^{-e}\Jac_{p\circ\sigma}(\gamma(t)\right)^{-1}p\left(R(\gamma(t),u)\right)\end{equation}
Applying the implicit function theorem to $u(t,v)$ ensures that given an analytic arc $v(t)$ there exists a solution $u_v(t)=u(t,v(t))$. Using the same argument as in the proof of \ref{item:CoV1}, the solution $u_v(t)$ is unique. 
This proves \ref{item:CoV0}.

Let us prove \ref{item:CoV2}. Let $\gamma\in\Delta_{e,e'}\cap\sigma^{-1}(A)$ then
\begin{align}
\sigma_{*n}^{-1}(\pi_n(\sigma_*(\gamma)))
    &= \left\{\eta\in\mathcal L_n(M),\,\sigma_{*n}(\eta)=\pi_n(\sigma_*(\gamma)\right\}\label{eq:fibre} \nonumber\\
\begin{split}
    &= \left\{\pi_n(\eta),\,\eta\in\mathcal L(M),\,\sigma(\eta)\equiv\sigma(\gamma)\mod t^{n+1}\right\}\text{ using that $\mathcal L(M)\rightarrow\mathcal L_n(M)$ is}\\
    &  \text{surjective since $M$ is smooth and that $\pi_n\circ\sigma_*=\sigma_{*n}\circ\pi_n$.}
\end{split}\nonumber \\
    &= \left\{\gamma(t)+t^{n+1-e}u(t)\mod t^{n+1},\,u\in\mathbb R\{t\}^d,\,\Jac_{p\circ\sigma}(\gamma(t))u(t)\equiv0\mod t^e\right\}\nonumber \\
    & \text{ by \ref{item:CoV1} and \eqref{eq:CoV4}}\nonumber
\end{align}
Thus, the fiber is an affine subspace of $\mathbb R^{de}$. There are invertible matrices $A$ and $B$ with coordinates in $\mathbb R\{t\}$ such that $A\Jac_{p\circ\sigma}(\gamma(t))B$ is diagonal with entries $t^{e_1},\ldots,t^{e_d}$ such that $e=e_1+\cdots+e_d$. Therefore the fiber is of dimension $e$.

Since $\sigma$ is not assumed to be birational, we can't use the section argument of \cite[3.4]{DL99} or \cite[4.2]{KP03}, instead we use a topological noetherianity argument to prove that $\sigma_{*n|\Delta_{e,e',n}}$ is a piecewise trivial fibration.

We may assume that $M$ is semialgebraically connected, then by Artin-Mazur theorem \cite[8.4.4]{BCR}, there exist $Y\subset\mathbb R^{p+q}$ a non-singular irreducible algebraic set of dimension $\dim M$, $M'\subset Y$ an open semialgebraic subset of $Y$, $s:M\rightarrow M'$ a Nash-diffeomorphism and $g:Y\rightarrow\mathbb R^N$ a polynomial map such that the following diagram commutes
$$\xymatrix{
\mathbb R^{p+q} \ar[dd]_\Pi & Y \ar@{_{(}->}[l] \ar[rd]^g & \\
& M' \ar@{^{(}->}[u] & \mathbb R^N \\
\mathbb R^p & M \ar[u]^s_\simeq \ar[ru]_\sigma \ar@{_{(}->}[l]   &
}$$
Thus, we have
$$\sigma_{*n}^{-1}(\pi_n(\sigma_*(\gamma)))=\left\{\gamma(t)+t^{n+1-e}u(t)\mod t^{n+1},\,u\in\mathbb R\{t\}^d,\,\Jac_{g\circ s}(\gamma(t))u(t)\equiv0\mod t^e\right\}$$
So $\Delta_{e,e',n}$ is constructible and we may assume that $\sigma_{*n}:\Delta_{e,e',n}\rightarrow\sigma_{*n}(\Delta_{e,e',n})$ is polynomial up to working with arcs over $M'$ via $s$. The fibers (i.e. $\mathbb R^e$) have odd Euler characteristic with compact support, so by Theorem \ref{thm:oddfibers} the image $\sigma_{*n}(\Delta_{e,e',n})$ is constructible.

Let $V=\{u_0+u_1t+\cdots+u_nt^n,\,u_i\in\mathbb R^d\}$ and fix $\Lambda_0:V\rightarrow V_0$ a linear projection on a subspace of dimension $e$. The set $\Omega_{0}=\{\pi_n(\gamma(t))\in\Delta_{e,e',n},\,\dim\Lambda_0(\sigma_{*n}^{-1}(\pi_n(\sigma_*(\gamma))))<e\}$ is closed, constructible and union of fibers of $\sigma_{*n}$. Therefore $(\sigma_{*n},\Lambda_0):\Delta_{e,e',n}\setminus\Omega_{0}\rightarrow\sigma_{*n}(\Delta_{e,e',n}\setminus\Omega_{0})\times V_0$ is a constructible isomorphism. We now repeat the argument to the closed constructible subset $\sigma_{*n}(\Omega_{0})$ and so on. Indeed, assume that $\Delta_{e,e',n}\supsetneq\Omega_{0}\supsetneq\Omega_{1}\supsetneq\cdots\supsetneq\Omega_{{i-1}}$ are constructed as previously and that $\Omega_{{i-1}}\neq\varnothing$, then we may choose $\Lambda_i$ such that $\Omega_{i}\subsetneq\Omega_{{i-1}}$. So on the one hand the process continues until one $\Omega_i$ is empty, on the other hand it must stop because of the noetherianity of the $\mathcal{AS}$-topology. Therefore after a finite number of steps, one $\Omega_i$ is necessarily empty.
\end{proof}

\subsection{Essence of the proof}
By our hypothesis, there exists a sequence of blowings-up $\sigma:M\rightarrow X$ with non-singular centers such that $\tilde\sigma=f\circ\sigma:M\rightarrow X$ is Nash.
$$\xymatrix{&M \ar[ld]_\sigma \ar[rd]^{\tilde\sigma}&\\X \ar[rr]_f&&X}$$
After adding more blowings-up, we may assume that the critical loci of $\sigma$ and $\tilde\sigma$ are simultaneously normal crossing and denote them by $\sum\nu_iE_i$ and $\sum\tilde\nu_iE_i$. Our hypothesis ensures that $\nu_i\ge\tilde\nu_i$.

In the same way, we may ensure that the inverse images of $H$ (defined in Lemma \ref{lem:Hideal}) by $\sigma$ and $\tilde\sigma$ are also simultaneously normal crossing and denote them $\sigma^{-1}(H)=\sum_{i\in I}\lambda_iE_i$ (resp. $\tilde\sigma^{-1}(H)=\sum_{i\in I}\tilde\lambda_iE_i$). \\

We recall the usual notation\footnote{This notation is natural and classical. See \cite[Chapter II, \S1]{KKMS} for some properties of this stratification.}. For $\mathbf{j}=(j_i)_{i\in I}\in\mathbb N^I$, we set $J=J(\mathbf j)=\{i,\,j_i\neq0\}\subset I$, $E_J=\cap_{i\in J}E_i$ and $\pring{E_J}=E_J\setminus\cup_{i\in I\setminus J}E_i$.

We also define: $\mathcal B_\mathbf j=\{\gamma\in\mathcal L(M),\,\forall i\in J,\,\ord_\gamma E_i=j_i,\,\gamma(0)\in\pring{E_J}\}$ and for all $n\in\mathbb N$, $\mathcal B_{\mathbf{j},n}=\pi_n(\mathcal B_\mathbf j)$ and $X_{\mathbf j,n}(\sigma)=\pi_n(\sigma_*\mathcal B_\mathbf j)=\sigma_{*n}(\mathcal B_{\mathbf j,n})$. \\

\begin{lemma}\label{lem:eprime}
We have $\mathcal B_{\mathbf j}\subset\Delta_{e(\mathbf j),e'(\mathbf j)}(\sigma)$ where $\displaystyle e(\mathbf j)=\sum_{i\in I}\nu_ij_i$ and $\displaystyle e'(\mathbf j)=\sum_{i\in I}\lambda_ij_i$.
\end{lemma}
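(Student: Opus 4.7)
The plan is to reduce both statements to local monomial computations, using that the two ideals involved are simultaneously normal crossing on $M$.

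Fix $\gamma \in \mathcal B_{\mathbf j}$ and work in a local analytic chart of $M$ centered at $\gamma(0) \in \pring{E_J}$. Since the divisors in question have normal crossings at this point, we may choose local coordinates $(x_1,\ldots,x_d)$ on $M$ at $\gamma(0)$ so that for every $i \in J$, $E_i$ is cut out by $x_i=0$, and no other $E_k$ passes through $\gamma(0)$. By definition of $\mathcal B_{\mathbf j}$, the arc then satisfies $\ord_t x_i(\gamma(t)) = j_i$ for $i\in J$ and $x_i(\gamma(0)) \neq 0$ for $i \notin J$.

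For the Jacobian order, recall that $\Jac_\sigma$ denotes the ideal generated by the $d\times d$ minors of the Jacobian matrix of $\sigma$. The normal crossings hypothesis means that, on this chart, $\Jac_\sigma$ is a principal monomial ideal: there is a unit $u \in \mathcal O_{M,\gamma(0)}$ such that
$$\Jac_\sigma = \left(u \cdot \prod_{i \in J} x_i^{\nu_i}\right).$$
Evaluating along $\gamma(t)$ and computing the $t$-order gives
$$\ord_t \Jac_\sigma(\gamma(t)) \;=\; \sum_{i\in J} \nu_i j_i \;=\; \sum_{i\in I} \nu_i j_i \;=\; e(\mathbf j),$$
where in the last equality the terms with $i\notin J$ contribute $0$ since $j_i=0$ there.

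For the second condition, we need to exhibit $g \in H$ with $\ord_t g(\sigma(\gamma(t))) \le e'(\mathbf j)$. The hypothesis that $\sigma^{-1}(H)$ is the normal crossing divisor $\sum_i \lambda_i E_i$ means that the pullback ideal $H \cdot \mathcal O_{M,\gamma(0)}$ is principal and equals $\bigl(\prod_{i\in J} x_i^{\lambda_i}\bigr)$. This ideal is generated by the elements $\{g\circ\sigma : g \in H\}$; since it equals the principal monomial ideal, a standard Nakayama-type argument forces at least one generator to have unit coefficient. Concretely, writing each $g\circ\sigma = v_g \prod_i x_i^{\lambda_i}$ and noting that the $v_g$ together generate the unit ideal in $\mathcal O_{M,\gamma(0)}$, at least one $v_{g_0}$ is a unit at $\gamma(0)$. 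Composing with $\gamma(t)$ then gives $\ord_t g_0(\sigma(\gamma(t))) = \sum_{i\in J}\lambda_i j_i = e'(\mathbf j)$, so $g_0(\sigma(\gamma(t))) \not\equiv 0 \bmod t^{e'(\mathbf j)+1}$, which establishes $\sigma_*(\gamma)\in \mathcal L^{(e'(\mathbf j))}(X)$.

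The only point requiring a bit of care is the last Nakayama step—extracting a single $g\in H$ whose pullback generates $\sigma^{-1}(H)$ locally—but since $H$ is finitely generated (it is an ideal of a noetherian ring) and the local pullback ideal is principal, this is routine. Everything else is a direct order-of-vanishing computation in the monomial coordinates provided by the simultaneous normal crossings hypothesis.
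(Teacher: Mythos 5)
Your proof is correct and follows essentially the same route as the paper's: choose local coordinates adapted to the simultaneous normal crossings, and compute orders of vanishing of the two monomial ideals along $\gamma$. The only difference is that you spell out the final step (producing a single $g\in H$ whose pullback realizes the order $e'(\mathbf j)$) via a Nakayama-type argument, whereas the paper compresses this into the one-line implication $\ord_\gamma\sigma^{-1}(H)=e'(\mathbf j)\Rightarrow\ord_{\sigma(\gamma)}(H)=e'(\mathbf j)$; both are fine since the minimal $t$-order over the pullback ideal is attained on the generators $g\circ\sigma$.
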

\begin{proof}
Let $\gamma\in\mathcal B_{\mathbf j}$ and choose a local coordinate system of $M$ at $\gamma(0)$ such that the critical locus of $\sigma$ is locally described by the equation $\prod_{i\in J}x_i^{\nu_i}=0$ and $E_i$ by the equation $x_i=0$. Since $\ord_\gamma E_i=j_i$, we have $\gamma_i(t)=c_{j_i}t^{j_i}+\cdots$ and $c_{j_i}\neq0$. Then $\prod_{i\in J}\gamma_i^{\nu_i}=ct^{e(\mathbf j)}+\cdots$ with $c\neq0$. \\
So we have $\ord_t\left(\Jac_\sigma(\gamma(t))\right)=e(\mathbf j)$. \\
In the same way, $\ord_{\gamma}\sigma^{-1}(H)=e'(\mathbf j)$ thus $\ord_{\sigma(\gamma)}(H)=e'(\mathbf j)$.
\end{proof}

Therefore we set $A_n(\sigma)=\left\{\mathbf j,\,\sum_{i\in I}\nu_ij_i\le\frac{n}{2},\,\sum_{i\in I}\lambda_ij_i\le n\right\}$. Indeed, for each $\mathbf j\in A_n(\sigma)$, $\mathcal B_{\mathbf j}\subset\Delta_{e(j),e'(j)}(\sigma)$ and we may apply Lemma \ref{lem:CoV} at the level of $n$-jets. \\

The argument of the following lemma is essentially the same as \cite[\S4.2]{FKP10}.
\begin{lemma}[A decomposition of jet spaces]\label{lem:Decomposition}
For all $\mathbf j\in A_n(\sigma)$, the sets $X_{\mathbf j,n}(\sigma)$ are constructible subsets of $\mathcal L_n(X)$ and $\dim X_{\mathbf j,n}(\sigma)=d(n+1)- s_\mathbf j-\sum_{i\in I}\nu_ij_i$ where $s_\mathbf j=\sum_{i\in I} j_i$. Moreover $\Im(\sigma_{*n})=Z_n(\sigma)\sqcup\displaystyle\bigsqcup_{\mathbf j\in A_n(\sigma)}X_{\mathbf j,n}(\sigma)$ and the set $Z_n(\sigma)$ satisfies $\dim Z_n(\sigma)<d(n+1)-\frac{n}{c}$ where $c=\max(2\nu_{\max},\lambda_{\max})$.
\end{lemma}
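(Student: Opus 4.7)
The plan is to combine Lemma \ref{lem:CoV} with an explicit local computation in normal-crossing coordinates. First I would establish that $\mathcal B_{\mathbf j, n}$ is a semialgebraic (hence $\mathcal{AS}$) subset of $\mathcal L_n(M)$ of dimension $d(n+1) - s_{\mathbf j}$. Indeed, since the $E_i$ are simultaneously normal crossings, at a point of $\pring{E_J}$ one can choose local coordinates $x_1, \ldots, x_d$ so that $E_i = \{x_i = 0\}$ for $i \in J$. An $n$-jet $\gamma$ then lies in $\mathcal B_{\mathbf j, n}$ precisely when, for each $i \in J$, the first $j_i$ coefficients of $\gamma_i$ vanish and its coefficient of $t^{j_i}$ is nonzero, while $\gamma(0)$ satisfies the open condition of lying in $\pring{E_J}$; counting free coefficients gives the claimed dimension.

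Next, to obtain the constructibility and dimension of $X_{\mathbf j, n}(\sigma) = \sigma_{*n}(\mathcal B_{\mathbf j, n})$, I would invoke Lemma \ref{lem:CoV}. Lemma \ref{lem:eprime} gives $\mathcal B_{\mathbf j} \subset \Delta_{e(\mathbf j), e'(\mathbf j)}$, and the assumption $\mathbf j \in A_n(\sigma)$ forces $n \ge \max(2 e(\mathbf j), e'(\mathbf j))$, so \ref{item:CoV2} applies. The crucial observation is that $\mathcal B_{\mathbf j, n}$ is a union of fibers of $\sigma_{*n}$ restricted to $\Delta_{e(\mathbf j), e'(\mathbf j), n}$: if $\gamma \in \mathcal B_{\mathbf j}$ and $\sigma(\gamma) \equiv \sigma(\eta) \mod t^{n+1}$, then \ref{item:CoV1} produces $\eta \equiv \gamma \mod t^{n - e(\mathbf j) + 1}$, and the estimate $j_i \le e(\mathbf j) \le n/2 \le n - e(\mathbf j)$ ensures both $\ord_\eta E_i = j_i$ and $\eta(0) \in \pring{E_J}$. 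Restricting the piecewise trivial fibration of \ref{item:CoV2} to $\mathcal B_{\mathbf j, n}$ therefore yields a piecewise trivial fibration onto $X_{\mathbf j, n}(\sigma)$ with fiber $\mathbb R^{e(\mathbf j)}$, giving constructibility and $\dim X_{\mathbf j, n}(\sigma) = d(n+1) - s_{\mathbf j} - e(\mathbf j)$. The disjointness of the $X_{\mathbf j, n}(\sigma)$ for distinct indices in $A_n(\sigma)$ follows from the same order-preservation argument: a common image point would yield $\gamma \in \mathcal B_{\mathbf j}$ and $\gamma' \in \mathcal B_{\mathbf j'}$ with $\sigma(\gamma) \equiv \sigma(\gamma') \mod t^{n+1}$, forcing $\gamma' \in \mathcal B_{\mathbf j}$ and hence $\mathbf j = \mathbf j'$.

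To control $Z_n(\sigma)$, write the complement $\mathcal L_n(M) \setminus \bigsqcup_{\mathbf j \in A_n(\sigma)} \mathcal B_{\mathbf j, n}$ as the union of two subsets. The first is the locus of jets $\gamma$ with $\gamma_i \equiv 0 \mod t^{n+1}$ in local coordinates for some $i \in I$, which has dimension at most $d(n+1) - (n+1)$. The second is $\bigsqcup \mathcal B_{\mathbf j, n}$ taken over the finitely many $\mathbf j \notin A_n(\sigma)$ with $\max_i j_i \le n$. For such $\mathbf j$, either $\sum \nu_i j_i > n/2$, whence $s_{\mathbf j} > n/(2\nu_{\max})$, or $\sum \lambda_i j_i > n$, whence $s_{\mathbf j} > n/\lambda_{\max}$; in both cases $s_{\mathbf j} > n/c$, so $\dim \mathcal B_{\mathbf j, n} = d(n+1) - s_{\mathbf j} < d(n+1) - n/c$. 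Since both pieces satisfy the strict dimension bound and $\sigma_{*n}$ does not raise dimension, $\dim Z_n(\sigma) < d(n+1) - n/c$.

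The main obstacle is the verification that $\mathcal B_{\mathbf j, n}$ is a union of fibers of $\sigma_{*n}$, as this is exactly where the numerical threshold defining $A_n(\sigma)$ is exploited through Lemma \ref{lem:CoV}; once this is in place the remaining statements are essentially dimension bookkeeping combined with the local normal-crossing description of $\mathcal B_{\mathbf j, n}$.
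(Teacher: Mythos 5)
Your proposal is correct and follows essentially the same route as the paper: compute $\dim\mathcal B_{\mathbf j,n}$ in normal-crossing coordinates, apply Lemma~\ref{lem:eprime} to place $\mathcal B_{\mathbf j}$ in $\Delta_{e(\mathbf j),e'(\mathbf j)}$, use Lemma~\ref{lem:CoV} to get a piecewise trivial $\mathbb R^{e(\mathbf j)}$-fibration onto $X_{\mathbf j,n}(\sigma)$ after checking $\mathcal B_{\mathbf j,n}$ is a union of fibers via item~(ii), and bound the remainder. You are somewhat more explicit than the paper on two points that it leaves implicit: the disjointness of the $X_{\mathbf j,n}(\sigma)$ for distinct $\mathbf j\in A_n(\sigma)$ (deduced by the same order-preservation argument), and the contribution to $Z_n(\sigma)$ from $n$-jets whose coordinate along some $E_i$ vanishes to order $>n$ (which the paper's ``otherwise $\mathbf j\notin A_n(\sigma)$'' dichotomy technically does not cover, since such jets lie in no $\mathcal B_{\mathbf j,n}$ with $j_i\le n$); your separate treatment of that locus, with the bound $d(n+1)-(n+1)$, is the right fix. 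Note that the inequality $j_i\le e(\mathbf j)$ you invoke relies on $\nu_i\ge 1$ for every $i\in I$, which holds here because the $E_i$ are exceptional divisors of blowings-up with non-singular centers; the paper uses this silently as well.
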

\begin{proof}
Consider $\mathbf j$ such that $\pring{E_J}\neq\varnothing$ and $\forall i\in I,\,0\le j_i\le n$. The fiber of $\mathcal B_{\mathbf j,n}\rightarrow\pring{E_J}$ is $$\prod_{i\in J}(\mathbb R^*\times\mathbb R^{n-j_i})\times(\mathbb R^n)^{d-|J|}\simeq(\mathbb R^*)^{|J|}\times\mathbb R^{dn-s_\mathbf j}$$ since truncating the coordinates of $\gamma\in\mathcal B_\mathbf j$ to degree $n$ produces $d-|J|$ polynomials of degree $n$ with fixed constant terms and for $i\in J$ a polynomial of the form $c_{j_i}t^{j_i}+c_{j_i+1}t^{j_i+1}+\cdots+c_{n}t^{n}$ with $c_{j_i}\in\mathbb R^*$ and other $c_k\in\mathbb R$. We conclude that $\dim \mathcal B_{\mathbf j,n}=d(n+1)-s_\mathbf j$.

We first assume that $\mathbf j\in A_n(\sigma)$. By Lemma \ref{lem:eprime}, $\mathcal B_{\mathbf j}\subset\Delta_{e(\mathbf j),e'(\mathbf j)}(\sigma)$. Hence by \ref{item:CoV2}, $X_{\mathbf j,n}(\sigma)$ is constructible since it is the image of the constructible set $\mathcal B_{\mathbf j,n}$ by the map $\sigma_{*n|\Delta_{e(\mathbf j),e'(\mathbf j),n}}$ with fibers of odd Euler characteristic with compact support. Let $\gamma_1\in\mathcal B_{\mathbf j,n}$ and $\gamma_2\in\Delta_{e(\mathbf j),e'(\mathbf j),n}$ with $\sigma_{*n}(\gamma_1)=\sigma_{*n}(\gamma_2)$, then, by \ref{item:CoV1}, $\gamma_1\equiv\gamma_2\mod t^{n-e(\mathbf j)+1}$ with $n-e(\mathbf j)\ge e(\mathbf j)$ and hence $\gamma_2\in\mathcal B_{\mathbf j,n}$. Thus by \ref{item:CoV2} the map $\mathcal B_{\mathbf j,n}\rightarrow X_{\mathbf j,n}(\sigma)$ is a piecewise trivial fibration with fiber $\mathbb R^{e(\mathbf j)}$. So we have $\dim X_{\mathbf j,n}(\sigma)=d(n+1)-s_\mathbf j-e(\mathbf j)$ as claimed.

Otherwise $\mathbf j\notin A_n(\sigma)$ and then $\dim X_{\mathbf j,n}\le\dim\mathcal B_{\mathbf j,n}=d(n+1)-s_\mathbf j< d(n+1)-\frac{n}{c}$ (since $\frac{n}{2}<e(\mathbf j)\le\nu_{\max} s_\mathbf j$ or $n<e'(\mathbf j)\le\lambda_{\max}s_\mathbf j$).
\end{proof}

\begin{rem}
The two previous lemmas work as they are if we replace $\sigma$ by $\tilde\sigma$, $\nu_i$ by $\tilde\nu_i$, $\lambda_i$ by $\tilde\lambda_i$ and $c$ by $\tilde c$.
\end{rem}

\begin{rem}
Remember that $\Im\sigma_{*n}\subset\pi_n(\mathcal L(X))$ (resp. $\Im\tilde\sigma_{*n}\subset\pi_n(\mathcal L(X))$). Moreover, since we may lift by $\sigma$ an arc not entirely included in the singular locus, $\pi_n(\mathcal L(X))\setminus\Im\sigma_{*n}\subset\pi_n(\mathcal L(X_\sing))$. The second part only works for $\sigma$ and doesn't stand for $\tilde\sigma$.
\end{rem}

In order to apply the virtual Poincaré polynomial, we are going to modify the objects of the partitions of Lemma \ref{lem:Decomposition}.
\begin{notation}\label{notation:partitions}
We set $$\widetilde{\pi_n(\mathcal L(X))}:=\clos[\mathcal AS]{Z_n(\sigma)\sqcup(\pi_n(\mathcal L(X))\setminus\Im\sigma_{*n})}\sqcup\bigsqcup_{\mathbf j\in A_n(\sigma)}X_{\mathbf j,n}(\sigma)$$ $$\left(\text{resp. }\widetilde{\Im\tilde\sigma_{*n}}:=\clos[\mathcal AS]{Z_n(\tilde\sigma)}\sqcup\bigsqcup_{\mathbf j\in A_n(\tilde\sigma)}X_{\mathbf j,n}(\tilde\sigma)\right)$$ where the closure is taken in the complement of $\displaystyle\bigsqcup_{\mathbf j\in A_n(\sigma)}X_{\mathbf j,n}(\sigma)\ \displaystyle\left(\text{resp. in }\widetilde{\pi_n(\mathcal L(X))}\setminus\bigsqcup_{\mathbf j\in A_n(\tilde\sigma)}X_{\mathbf j,n}(\tilde\sigma)\right)$. Hence we still have the inclusion $\widetilde{\Im\tilde\sigma_{*n}}\subset\widetilde{\pi_n(\mathcal L(X))}$, the unions are still disjoint and the dimensions remain the same.
\end{notation}

\begin{lemma}
For $\mathbf j\in A_n(\sigma)$ we have $\beta\left(X_{\mathbf j,n}(\sigma)\right)=\beta\left(\pring{E_J}\right)(u-1)^{|J|}u^{nd-\sum(\nu_i+1)j_i}$. \\ (resp. for $\mathbf j\in A_n(\tilde\sigma)$ we have $\beta\left(X_{\mathbf j,n}(\tilde\sigma)\right)=\beta\left(\pring{E_J}\right)(u-1)^{|J|}u^{nd-\sum(\tilde\nu_i+1)j_i}$)
\end{lemma}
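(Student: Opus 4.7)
The plan is to combine the structural description of $X_{\mathbf j, n}(\sigma)$ obtained in the proof of Lemma \ref{lem:Decomposition} with the piecewise trivial fibration property \ref{item:CoV2}, and then invoke additivity and multiplicativity of $\beta$. Throughout we use that for a piecewise trivial fibration $E \to B$ with constant fiber $F$ in the $\mathcal{AS}$ category, one has $\beta(E) = \beta(B)\beta(F)$, which follows from additivity together with the product formula for $\beta$. The basic inputs are $\beta(\mathbb R) = u$ and $\beta(\mathbb R^*) = u - 1$.

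First I would compute $\beta(\mathcal B_{\mathbf j, n})$ via the projection $\mathcal B_{\mathbf j, n} \to \pring{E_J}$ sending $\pi_n(\gamma)$ to $\gamma(0)$. The proof of Lemma \ref{lem:Decomposition} shows that, in local coordinates on $M$ in which each $E_i$ is a coordinate hyperplane, the fiber is
\[
\prod_{i \in J}(\mathbb R^* \times \mathbb R^{n - j_i}) \times (\mathbb R^n)^{d - |J|} \cong (\mathbb R^*)^{|J|} \times \mathbb R^{nd - s_{\mathbf j}}.
\]
This trivialization is uniform over the local charts of $M$ compatible with the normal crossing divisor $\sum E_i$, so the projection is piecewise trivial in the $\mathcal{AS}$ category. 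Therefore
\[
\beta(\mathcal B_{\mathbf j, n}) = \beta(\pring{E_J})(u - 1)^{|J|} u^{nd - s_{\mathbf j}}.
\]

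Second, the assumption $\mathbf j \in A_n(\sigma)$ means $e(\mathbf j) \le n/2$ and $e'(\mathbf j) \le n$, so $n \ge \max(2e(\mathbf j), e'(\mathbf j))$. Combined with Lemma \ref{lem:eprime}, this lets us apply \ref{item:CoV2} to the restriction $\sigma_{*n}: \mathcal B_{\mathbf j, n} \to X_{\mathbf j, n}(\sigma)$, which is therefore a piecewise trivial fibration with fiber $\mathbb R^{e(\mathbf j)}$. Hence $\beta(\mathcal B_{\mathbf j, n}) = \beta(X_{\mathbf j, n}(\sigma))\, u^{e(\mathbf j)}$.

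Combining the two identities and using
\[
s_{\mathbf j} + e(\mathbf j) = \sum_{i \in I} j_i + \sum_{i \in I} \nu_i j_i = \sum_{i \in I}(\nu_i + 1) j_i
\]
(only $i \in J$ contributes since $j_i = 0$ otherwise) yields the claimed formula. The parenthetical statement for $\tilde\sigma$ follows by exactly the same argument, since Lemmas \ref{lem:Decomposition}, \ref{lem:eprime}, and \ref{lem:CoV} all apply verbatim to $\tilde\sigma$ with $\nu_i, \lambda_i$ replaced by $\tilde\nu_i, \tilde\lambda_i$. There is no serious obstacle here; the only delicate point is confirming that both fibrations are piecewise trivial in the $\mathcal{AS}$ category so that $\beta$ is genuinely multiplicative on them, but this is already contained in Lemmas \ref{lem:Decomposition} and \ref{lem:CoV}.
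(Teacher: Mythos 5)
Your proposal is correct and follows essentially the same route as the paper's proof: compute $\beta(\mathcal B_{\mathbf j,n})$ from the fibration over $\pring{E_J}$, divide by $u^{e(\mathbf j)}$ using Lemmas \ref{lem:eprime} and \ref{item:CoV2} (as set up in the proof of Lemma \ref{lem:Decomposition}), and simplify the exponent. Your write-up is somewhat more explicit than the paper's terse three-line computation, in particular in spelling out that $\mathbf j\in A_n(\sigma)$ gives the numerical condition $n\ge\max(2e(\mathbf j),e'(\mathbf j))$ needed to invoke \ref{item:CoV2}.
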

\begin{proof}
We have
\begin{align*}
\beta\left(X_{\mathbf j,n}(\sigma)\right)
&=\beta\left(\mathcal B_{\mathbf j,n}\right)u^{-\sum\nu_ij_i}\ \ \text{ by Lemma \ref{lem:CoV} and Lemma \ref{lem:eprime}} \\
&=\beta\left(\pring{E_J}\times(\mathbb R^*)^{|J|}\times\mathbb R^{dn-s_\mathbf j}\right)u^{-\sum\nu_ij_i}\ \ \text{ by the beginning of the proof of Lemma \ref{lem:Decomposition}} \\
&=\beta\left(\pring{E_J}\right)(u-1)^{|J|}u^{nd-s_\mathbf j-\sum\nu_ij_i}
\end{align*}
The same argument works for $\tilde\sigma$ too.
\end{proof}

\begin{lemma}\label{lem:nuinuitilde}
$\forall i\in I,\,\nu_i=\tilde\nu_i$
\end{lemma}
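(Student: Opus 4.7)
The Jacobian hypothesis gives $\nu_i \ge \tilde\nu_i$ for every $i \in I$, so only the reverse inequalities remain. My strategy is to apply the virtual Poincaré polynomial to the two partitions of Notation \ref{notation:partitions} and to extract the equalities $\nu_i = \tilde\nu_i$ by matching coefficients as polynomials in $u$.

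The first step is to verify that $\widetilde{\pi_n(\mathcal L(X))}$ and $\widetilde{\Im\tilde\sigma_{*n}}$ differ only by an $\mathcal{AS}$-set of low dimension. Since $\sigma$ is a composition of blowings-up with non-singular centers it is surjective, and $\tilde\sigma = f \circ \sigma$ is surjective too because $f$ is a self-homeomorphism of $X$; moreover any analytic arc on $X$ not entirely contained in $X_\sing$ lifts uniquely through $\sigma$ to $M$, and by Proposition \ref{prop:Puiseux} almost every $n$-jet on $X$ then belongs to $\Im\tilde\sigma_{*n}$. This yields $\dim\bigl(\widetilde{\pi_n(\mathcal L(X))} \setminus \widetilde{\Im\tilde\sigma_{*n}}\bigr) < d(n+1) - n/c_0$ for some constant $c_0$ and sufficiently large $n$.

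Applying $\beta$ and the preceding lemma gives the congruence
\[\sum_{\mathbf{j} \in A_n(\sigma)} \beta(\pring{E_J})(u-1)^{|J|} u^{nd - \sum(\nu_i+1)j_i} \equiv \sum_{\mathbf{j} \in A_n(\tilde\sigma)} \beta(\pring{E_J})(u-1)^{|J|} u^{nd - \sum(\tilde\nu_i+1)j_i} \pmod{u^{d(n+1)-n/c_1}}.\]
I then match coefficients by descending induction on the degree in $u$. The top-degree term $d(n+1)$ comes from $\mathbf{j} = \mathbf{0}$ on both sides and matches. Suppose for contradiction that $\nu_{i_0} > \tilde\nu_{i_0}$ for some $i_0$. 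Then the multi-index $\mathbf{j} = e_{i_0}$ contributes on the right a term of leading degree $d(n+1) - (1+\tilde\nu_{i_0})$ with non-zero coefficient (a positive multiple of the leading coefficient of $\beta(\pring{E_{i_0}})$). On the left, any multi-index $\mathbf{k}$ producing a contribution of this degree or higher must satisfy $\sum(\nu_i+1)k_i \le 1+\tilde\nu_{i_0} < 1+\nu_{i_0}$, which excludes $\mathbf{k} = e_{i_0}$. Inducting in ascending order of $\sum(\tilde\nu_i+1)j_i$ on the right, all other contributions at this degree have already been matched in earlier steps, so the coefficient attached to $\beta(\pring{E_{i_0}})$ cannot be balanced on the left, a contradiction.

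The main obstacle will be the inductive bookkeeping: at each degree in $u$ several multi-indices $\mathbf{j}$ (notably those with $|J|>1$, or several coordinates sharing the same $\nu_i$) can contribute, and the polynomials $\beta(\pring{E_J})(u-1)^{|J|}$ mix several degrees. Separating the $e_{i_0}$-contribution from the others relies on the positivity of the leading coefficient of the virtual Poincaré polynomial of $\pring{E_{i_0}}$ and on the distinctness of the codimensions of the strata $\pring{E_J}$ for different $J$, which together prevent cancellations from erasing the $e_{i_0}$-discrepancy.
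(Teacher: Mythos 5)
Your overall idea — apply $\beta$ to the two decompositions from Notation~\ref{notation:partitions}, and derive a contradiction from $\nu_{i_0}>\tilde\nu_{i_0}$ via a degree comparison — is the right one, and is essentially the strategy of the paper. However, there are two genuine gaps that the paper's argument is specifically designed to avoid.

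First, your preliminary claim that $\dim\bigl(\widetilde{\pi_n(\mathcal L(X))}\setminus\widetilde{\Im\tilde\sigma_{*n}}\bigr)<d(n+1)-n/c_0$ is both unproved and circular. You deduce it by asserting ``almost every $n$-jet on $X$ belongs to $\Im\tilde\sigma_{*n}$'' from lifting arcs through $\sigma$ and Proposition~\ref{prop:Puiseux}; but lifting through $\sigma$ controls $\Im\sigma_{*n}$, not $\Im\tilde\sigma_{*n}$, and Puiseux only gives a lift up to reparametrization $t\mapsto t^m$, not membership of $\gamma$ in $\Im\tilde\sigma_*$. The fact that arcs generically lift through $\tilde\sigma$ is precisely the content of Corollary~\ref{cor:lift}, which comes \emph{after} this lemma and uses it. The paper does not need any such bound: it sets $P_n=\beta\bigl(\widetilde{\pi_n(\mathcal L(X))}\bigr)-\beta\bigl(\widetilde{\Im\tilde\sigma_{*n}}\bigr)$, keeps this term completely uncontrolled in degree, and relies only on the fact that it is a $\beta$ of a set and hence has nonnegative leading coefficient, so it cannot cancel against the term it needs.

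Second, the ``coefficient-matching by descending induction'' step does not work as stated, and you flag the problem yourself without resolving it. At a given degree $d(n+1)-m$ you have contributions from every $\mathbf j$ with $\sum(\tilde\nu_i+1)j_i=m$ on one side and $\sum(\nu_i+1)k_i=m$ on the other, plus the lower-degree tails of $\beta(\pring{E_J})(u-1)^{|J|}$ for all $J$ with larger $|J|$ or larger $\dim\pring{E_J}$. These mix together, and your appeal to ``distinctness of codimensions of $\pring{E_J}$'' is false for distinct singletons ($\pring{E_i}$ and $\pring{E_{i'}}$ have the same codimension $1$). There is no well-founded induction here in which ``earlier steps have already matched the other contributions.'' What the paper does instead is group the entire difference over $A_n(\sigma)\cap A_n(\tilde\sigma)$ into a single polynomial $Q_n$, observe that by $\nu_i\ge\tilde\nu_i$ each summand $\beta(X_{\mathbf j,n}(\tilde\sigma))-\beta(X_{\mathbf j,n}(\sigma))$ is either zero or has positive leading coefficient (the $\tilde\sigma$ term dominates), so that \emph{all} leading contributions at the top degree $d(n+1)-k$ add constructively and $Q_n$ has positive leading coefficient of degree exactly $d(n+1)-k$, where $k=\min_\mathbf j\bigl(s_\mathbf j+\sum\tilde\nu_ij_i\bigr)$ over $\mathbf j$ detecting the discrepancy stabilizes in $n$. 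Then $P_n+Q_n$ has degree $\ge d(n+1)-k$ with a positive leading coefficient (both pieces nonnegative in leading coefficient), while the remaining terms $R_n,S_n,T_n,U_n$ all have degree $<d(n+1)-n/\max(c,\tilde c,1)$, which falls below $d(n+1)-k$ for $n$ large. This sidesteps the bookkeeping entirely: you never try to isolate the $e_{i_0}$-coefficient, you only compare the degree of a single, provably-positive-leading polynomial against a shrinking error.
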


\begin{proof}
Applying the virtual Poincaré polynomial to the partitions of Notation \ref{notation:partitions}, we get
\begin{multline*}
\beta\left(\widetilde{\pi_n(\mathcal L(X))}\right)-\beta\left(\widetilde{\Im\tilde\sigma_{*n}}\right)-\sum_{\mathbf j\in A_n(\sigma)\cap A_n(\tilde\sigma)}\left(\beta(X_{\mathbf j,n}(\sigma))-\beta(X_{\mathbf j,n}(\tilde\sigma))\right) \\
=\sum_{\mathbf j\in A_n(\sigma)\setminus A_n(\tilde\sigma)}\beta(X_{\mathbf j,n}(\sigma))-\sum_{\mathbf j\in A_n(\tilde\sigma)\setminus A_n(\sigma)}\beta(X_{\mathbf j,n}(\tilde\sigma))+\beta\left(\clos[\mathcal{AS}]{Z_n(\sigma)\sqcup(\pi_n(\mathcal L(X))\setminus\Im\sigma_{*n})}\right)-\beta\left(\clos[\mathcal{AS}]{Z_n(\tilde\sigma)}\right)
\end{multline*}
We set
$$
\begin{array}{ll}
\displaystyle P_n=\beta\left(\widetilde{\pi_n(\mathcal L(X)}\right)-\beta\left(\widetilde{\Im\tilde\sigma_{*n}}\right), & \displaystyle Q_n=-\sum_{\mathbf j\in A_n(\sigma)\cap A_n(\tilde\sigma)}\left(\beta(X_{\mathbf j,n}(\sigma))-\beta(X_{\mathbf j,n}(\tilde\sigma))\right), \\
\displaystyle R_n=\sum_{\mathbf j\in A_n(\sigma)\setminus A_n(\tilde\sigma)}\beta(X_{\mathbf j,n}(\sigma)), & \displaystyle S_n=-\sum_{\mathbf j\in A_n(\tilde\sigma)\setminus A_n(\sigma)}\beta(X_{\mathbf j,n}(\tilde\sigma)), \\
\displaystyle T_n=\beta\left(\clos[\mathcal{AS}]{Z_n(\sigma)\sqcup(\pi_n(\mathcal L(X))\setminus\Im\sigma_{*n})}\right), & \displaystyle U_n=-\beta\left(\clos[\mathcal{AS}]{Z_n(\tilde\sigma)}\right).
\end{array}$$
Assume there exists $i_0\in I$ such that $\nu_{i_0}>\tilde\nu_{i_0}$.

Then for $n$ big enough, $K_n=\displaystyle\left\{s_\mathbf j+\sum_{i\in I}\tilde\nu_ij_i,\ \mathbf j\in A_n(\sigma)\cap A_n(\tilde\sigma),\,\sum_{i\in I}(\nu_i-\tilde\nu_i)j_i>0\right\}$ is not empty. The minimum $k_n=\min K_n$ stabilizes for $n$ greater than some rank $n_0$. Let $k=k_{n_0}$. Then, for $n\ge n_0$, the degree of $Q_n$ is $\max\left\{d(n+1)-s_\mathbf j-\sum_{i\in I}\tilde\nu_ij_i\right\}=d(n+1)-k$ using the computation at the beginning of the proof of Lemma \ref{lem:Decomposition}.

The leading coefficients of $P_n$ is positive since $P_n=\beta\left(\widetilde{\pi_n(\mathcal L(X))}\setminus\widetilde{\Im\tilde\sigma_{*n}}\right)$. The leading coefficient of $Q_n$ is also positive. Hence the degree of the LHS is at least $d(n+1)-k$. \\
Moreover, we have $\deg R_n<d(n+1)-\frac{n}{\tilde c}$, $\deg S_n<d(n+1)-\frac{n}{c}$, $\deg T_n<d(n+1)-\frac{n}{\max(c,1)}$ and $\deg U_n<d(n+1)-\frac{n}{\tilde c}$. Indeed, for $T_n$, $\pi_n(\mathcal L(X))\setminus\Im\sigma_{*n}\subset\pi_n(\mathcal L(X_\sing))$ and $\dim\left(\pi_n(\mathcal L(X_\sing))\right)\le (n+1)(d-1)<d(n+1)-n$ by \ref{prop:jetsdim}.\ref{item:dimliftable}. So the degree of the RHS is less than $d(n+1)-\frac{n}{\max(c,\tilde c,1)}$. \\
We get a contradiction for $n$ big enough.
\end{proof}

\begin{cor}
$Q_n=0$
\end{cor}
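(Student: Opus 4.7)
The plan is to observe that this corollary is an immediate consequence of Lemma \ref{lem:nuinuitilde} together with the explicit formulas for $\beta(X_{\mathbf j,n}(\sigma))$ and $\beta(X_{\mathbf j,n}(\tilde\sigma))$ obtained just before it. More precisely, for each $\mathbf j \in A_n(\sigma) \cap A_n(\tilde\sigma)$ the preceding lemma gives
\[
\beta(X_{\mathbf j,n}(\sigma)) = \beta(\pring{E_J})(u-1)^{|J|} u^{nd - \sum_{i\in I}(\nu_i+1)j_i}
\]
and
\[
\beta(X_{\mathbf j,n}(\tilde\sigma)) = \beta(\pring{E_J})(u-1)^{|J|} u^{nd - \sum_{i\in I}(\tilde\nu_i+1)j_i}.
\]

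Since Lemma \ref{lem:nuinuitilde} asserts that $\nu_i = \tilde\nu_i$ for all $i \in I$, the exponents of $u$ coincide for each fixed $\mathbf j$. Hence the two virtual Poincaré polynomials are equal term by term, i.e.
\[
\beta(X_{\mathbf j,n}(\sigma)) - \beta(X_{\mathbf j,n}(\tilde\sigma)) = 0 \quad \text{for every } \mathbf j \in A_n(\sigma) \cap A_n(\tilde\sigma).
\]

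Summing over $\mathbf j \in A_n(\sigma) \cap A_n(\tilde\sigma)$ in the definition of $Q_n$ then gives $Q_n = 0$. There is no real obstacle here; the only thing to verify is that one is entitled to apply Lemma \ref{lem:nuinuitilde} uniformly in $\mathbf j$ and $n$, which is automatic since the equality $\nu_i = \tilde\nu_i$ is a property of the exceptional divisors themselves and independent of the choice of multi-index.
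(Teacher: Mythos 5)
Your proposal is correct and matches what the paper intends: the corollary is an immediate consequence of Lemma \ref{lem:nuinuitilde} combined with the preceding lemma's explicit formula $\beta(X_{\mathbf j,n}(\sigma))=\beta(\pring{E_J})(u-1)^{|J|}u^{nd-\sum(\nu_i+1)j_i}$ (and its $\tilde\sigma$ counterpart), so each summand of $Q_n$ vanishes once $\nu_i=\tilde\nu_i$ for all $i$. The paper states this as an unproved corollary, and your argument is precisely the one it relies on.
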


Since $\tilde\sigma:M\rightarrow X$ is a proper Nash map generically one-to-one, there exists a closed semialgebraic subsets $S\subset X$ with $\dim S<d$ such that for every $p\in X\setminus S$, $\tilde\sigma^{-1}(p)$ is a singleton.

\begin{cor}\label{cor:lift}
Every arc on $X$ not entirely included in $S\cup X_\sing$ may be uniquely lifted by $\tilde\sigma$.
\end{cor}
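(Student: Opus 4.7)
Let $\gamma \colon (-\varepsilon,\varepsilon) \to X$ be an analytic arc not entirely contained in $S \cup X_\sing$, and set $D := \gamma^{-1}(S \cup X_\sing)$. Since $\gamma$ is analytic and $S \cup X_\sing$ is semialgebraic of dimension strictly less than $d$, the set $D$ is discrete in $(-\varepsilon,\varepsilon)$. Uniqueness is immediate: two analytic lifts of $\gamma$ must agree on the dense open subset $(-\varepsilon,\varepsilon) \setminus D$ where $\tilde\sigma$ has singleton fibres, and hence agree everywhere by analyticity.

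For existence I first show that $\tilde\sigma$ is a Nash diffeomorphism above $X \setminus (S \cup X_\sing)$. By Lemma \ref{lem:nuinuitilde}, $\nu_i = \tilde\nu_i$ for all $i$, so $\sigma$ and $\tilde\sigma$ share the same critical divisors. For each $E_i$ with $\nu_i > 0$, the map $\sigma|_{E_i}$ has positive-dimensional fibres (since $\dim E_i = d-1$ while $\sigma(E_i)$ has dimension at most $d-2$, being the image of a non-singular centre of codimension at least $2$); because $f$ is a homeomorphism, so does $\tilde\sigma|_{E_i}$, and every point of $\tilde\sigma(E_i)$ thus has non-singleton $\tilde\sigma$-fibre, i.e.\ $\tilde\sigma(E_i) \subset S$ by the defining property of $S$. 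Therefore $\tilde\sigma^{-1}(X \setminus (S \cup X_\sing))$ is disjoint from the critical locus of $\tilde\sigma$, and $\tilde\sigma$ restricts there to a proper bijective Nash immersion between Nash manifolds of the same dimension, hence to a Nash diffeomorphism. Setting $\tilde\gamma := \tilde\sigma^{-1} \circ \gamma$ on $(-\varepsilon,\varepsilon) \setminus D$ defines an analytic partial lift.

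To extend $\tilde\gamma$ across each $t_0 \in D$, I apply Proposition \ref{prop:Puiseux} to $\tilde\sigma$ (proper Nash and surjective, since $\sigma$ is and $f$ a homeomorphism) on each side of $t_0$: there exist $m \geq 1$ and an analytic arc $\hat\gamma \colon [0,\delta) \to M$ with $\tilde\sigma(\hat\gamma(t)) = \gamma(t_0 + t^m)$. Uniqueness on $(0,\delta^m)$ forces $\hat\gamma(s^{1/m}) = \tilde\gamma(t_0 + s)$, and the analyticity of $\tilde\gamma$ on that interval forces the Puiseux expansion of $\hat\gamma$ to involve only exponents divisible by $m$; hence $\hat\gamma(t) = F(t^m)$ with $F$ analytic. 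Viewed as a convergent power series in $s$, $F$ extends analytically across $s = 0$, and the identity $\tilde\sigma(F(s)) = \gamma(t_0 + s)$ extends by analytic continuation to a two-sided neighbourhood of $0$. The symmetric construction on the left yields another two-sided local lift, which by the uniqueness already established must agree with $F$; this gives the desired analytic extension of $\tilde\gamma$ through $t_0$. I expect the most delicate step to be the Nash diffeomorphism claim in the middle paragraph, where the combination of $\nu_i = \tilde\nu_i$ with the inclusion $\tilde\sigma(E_i) \subset S$ is crucial to rule out any residual critical behaviour of $\tilde\sigma$ over $X \setminus (S \cup X_\sing)$.
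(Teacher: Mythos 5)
Your approach is genuinely different from the paper's: you try to construct the lift directly, by showing $\tilde\sigma$ is a Nash diffeomorphism away from $S\cup X_\sing$ and then extending the partial lift across the discrete exceptional set $D=\gamma^{-1}(S\cup X_\sing)$ by a Puiseux argument. The uniqueness paragraph is fine, and the strategy is natural; but the extension step contains a circular inference, and this is exactly the point that the paper's motivic machinery is built to handle.

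Here is the gap. From Proposition~\ref{prop:Puiseux} you get an analytic $\hat\gamma\colon[0,\delta)\to M$ with $\tilde\sigma(\hat\gamma(t))=\gamma(t_0+t^m)$, and uniqueness gives $\hat\gamma(s^{1/m})=\tilde\gamma(t_0+s)$ for $s\in(0,\delta^m)$. You then assert that ``the analyticity of $\tilde\gamma$ on that interval forces the Puiseux expansion of $\hat\gamma$ to involve only exponents divisible by $m$.'' This does not follow. Writing $\hat\gamma(t)=\sum a_nt^n$, the composite $s\mapsto\hat\gamma(s^{1/m})=\sum a_ns^{n/m}$ is real-analytic on the \emph{open} interval $(0,\delta^m)$ for \emph{every} choice of the $a_n$, because $s\mapsto s^{1/m}$ is analytic away from $0$. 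So the only thing you know about $\tilde\gamma$ --- analyticity on the punctured interval --- imposes no constraint on the $a_n$ with $m\nmid n$. To conclude $a_n=0$ whenever $m\nmid n$ you would need $\tilde\gamma$ to be analytic \emph{at} $t_0$, which is precisely the statement under proof. (A toy instance: $\hat\gamma(t)=t$, $m=2$, $\tilde\gamma(t_0+s)=s^{1/2}$; here $\tilde\gamma$ is analytic on $(0,\delta^m)$, $\hat\gamma(s^{1/2})=\tilde\gamma(t_0+s)$, yet $\hat\gamma$ is not a function of $t^2$ and $\tilde\gamma$ does not extend analytically.) There is a secondary problem in the middle paragraph: the dimension count ``$\sigma(E_i)$ has dimension at most $d-2$, being the image of a non-singular centre of codimension at least $2$'' uses codimension at least $2$ \emph{in $X$}, which can fail; for the Whitney umbrella the centre $O_z=X_\sing$ has codimension $1$ in $X$ and $\sigma|_{E_1}$ has generically $0$-dimensional fibres, with the fibre over the origin a singleton. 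Also, $\sigma$ (hence $\tilde\sigma$) need not be surjective when $X$ is singular, so Proposition~\ref{prop:Puiseux} cannot be invoked quite as stated.

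The paper deliberately sidesteps all of this local analysis. It argues by contradiction: if $\gamma\notin\Im\tilde\sigma_*$, then the local H\"older estimate of Remark~\ref{rem:Holder} shows that \emph{every} arc agreeing with $\gamma$ to sufficiently high order $N$ is also not in the image, so $\dim\bigl(\pi_n(\mathcal L(X))\setminus\Im\tilde\sigma_{*n}\bigr)\ge(n-N)d$. On the other hand, the motivic decomposition (Lemma~\ref{lem:Decomposition}), together with Lemma~\ref{lem:nuinuitilde} (which forces $Q_n=0$), gives $\deg(R_n+S_n+T_n+U_n)<(n+1)d-\tfrac{n}{\max(c,\tilde c,1)}$ while $\deg P_n\ge(n+1)d-(N+1)d$, a contradiction for $n$ large. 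In other words, the obstruction to lifting is ruled out not pointwise but by a global dimension count on jet spaces, and this global input cannot be replaced by the local Puiseux reasoning you propose.
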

\begin{proof}
Let $\gamma$ be an analytic arc on $X$ not entirely in $S$ and not entirely in the singular locus of $X$. \\
Assume that $\gamma\notin\Im\tilde\sigma_*$. Then, by Proposition \ref{prop:Puiseux}, we have
$$\tilde\sigma^{-1}(\gamma(t))=\sum_{i=0}^mb_it^i+bt^{\frac{p}{q}}+\cdots,\,b\neq 0,\,m<\frac{p}{q}<m+1,\,t\ge0$$
Since $\tilde\sigma^{-1}$ is locally Hölder by Remark \ref{rem:Holder}, there is $N\in\mathbb N$ such that for every analytic arc $\eta$ on $X$ with $\gamma\equiv\eta\mod t^{N}$ we have $\tilde\sigma^{-1}(\eta(t))\equiv\tilde\sigma^{-1}(\gamma(t))\mod t^{m+1}$. Hence such an analytic arc $\eta$ isn't in the image of $\tilde\sigma_*$ and for $n\ge N$, $\pi_n(\eta)$ isn't in the image of $\tilde\sigma_{*n}:\mathcal L_n(M)\rightarrow\mathcal \pi_n(\mathcal L(X))$. Hence $\left({\pi^n_N}_{|\pi_n(\mathcal L(X))}\right)^{-1}(\pi_N(\gamma))\subset\pi_n(\mathcal L(X))\setminus\Im(\tilde\sigma_{*n})$.

The first step consists in computing the dimension of the fiber $\left({\pi^n_N}_{|\pi_n(\mathcal L(X))}\right)^{-1}(\pi_N(\gamma))$ where $n\ge N$. For that, we will work with a resolution $\rho:\tilde X\rightarrow X$ (for instance $\sigma$) instead of $\tilde\sigma$ since every analytic arc on $X$ not entirely included in $X_\sing$ may be lifted to $\tilde X$ by $\rho$. Let $\theta$ be the unique analytic arc on $\tilde X$ such that $\rho(\theta)=\gamma$. Let $e=\ord_t\left(\Jac_\rho(\theta(t))\right)$ and $e'$ be such that $\gamma\in\mathcal L^{(e')}(X)$. We may assume that $N\ge\max(2e,e')$ in order to apply Lemma \ref{lem:CoV} to $\rho$ for $\gamma$. \\
We consider the following diagram
$$\xymatrix{
\mathcal L(\tilde X)  \ar[r]^{\rho_*} \ar@{->>}[d]_{\pi_n} & \mathcal L(X) \ar@{->>}[d]^{\pi_n} \\%& \mathcal L(M) \ar@{.>}[l]_{\tilde\sigma_*} \ar@{.>>}[d]^{\pi_n}\\
\mathcal L_n(\tilde X) \ar[r]^{\rho_{*n}} \ar@{->>}[d]_{\pi^n_N} & \pi_n(\mathcal L(X)) \ar@{->>}[d]^{\pi^n_N} \\%& \mathcal L_n(M)  \ar@{.>}[l]_{\tilde\sigma_{*n}} \ar@{..>>}[d]^{\pi^n_N} \\
\mathcal L_N(\tilde X) \ar[r]_{\rho_{*N}} & \pi_N(\mathcal L(X)) 
}$$
Since the fibers of ${\rho_{*n}}_{|\Delta_{e,e',n}}$ and ${\rho_{*N}}_{|\Delta_{e,e',N}}$ are of dimension $e$, and since the fibers of $\pi^n_N:\mathcal L_n(\tilde X)\rightarrow\mathcal L_N(\tilde X)$ are of dimension $(n-N)d$, we have $\dim\left(\left({\pi^n_N}_{|\pi_n(\mathcal L(X))}\right)^{-1}(\pi_N(\gamma))\right)=(n-N)d$. \\
Hence $\dim\left(\pi_n(\mathcal L(X))\setminus\Im(\tilde\sigma_{*n})\right)\ge (n-N)d$. And so, with the notation of Lemma \ref{lem:nuinuitilde}, we have $$P_n+0=R_n+S_n+T_n+U_n$$ with $\deg P_n\ge(n-N)d=(n+1)d-(N+1)d$ and $\deg(R_n+S_n+T_n+U_n)<(n+1)d-\frac{n}{\max(c,\tilde c,1)}$. \\
We get a contradiction for $n$ big enough.
\end{proof}

\begin{proof}[End of the proof of Theorem \ref{thm:Main}]
Let $\gamma$ be an analytic arc on $X$ not entirely included in $S\cup X_\sing$. By Corollary \ref{cor:lift} and since $\gamma$ is not entirely included in $S\cup X_\sing$, $\tilde\sigma^{-1}(\gamma(t))$ is well defined and analytic. Hence $f^{-1}(\gamma(t))=\sigma(\tilde\sigma^{-1}(\gamma(t)))$ is real analytic. Finally $f^{-1}$ is generically arc-analytic in dimension $d=\dim X$. \\
So $f^{-1}$ is blow-Nash by Proposition \ref{prop:blowNash} and $\forall i\in I,\,\nu_i=\tilde\nu_i$ by Lemma \ref{lem:nuinuitilde}. Then, arguing as in Lemma \ref{lem:hyp}, $f^{-1}$ satisfies the Jacobian hypothesis too.
\end{proof}

\nocite{Hir64} 
\nocite{Fic05bis} 
\nocite{KP05} 
\nocite{FKP04}
\nocite{Seb04}
\nocite{BMP91}
\nocite{Paw84}
\footnotesize
%\bibliography{IFT}

\end{document}